\newcommand{\bm}[1]{\boldsymbol{#1}}
\newcommand{\lj}{[ \hspace{-2pt} [}
\newcommand{\rj}{] \hspace{-2pt} ]}
\newcommand{\mb}[1]{\mathbb{#1}}
\newcommand{\mc}[1]{\mathcal{#1}}
\newcommand{\mr}[1]{\mathrm{#1}}
\newcommand{\jump}[1]{\lj #1 \rj}
\newcommand{\aver}[1]{ \{#1\}  }
\newcommand{\enorm}[1]{ |\!|\!| #1 |\!|\!|}
\newcommand{\wnorm}[1]{ \| #1 \|_{*}}
\newcommand{\wt}[1]{\widetilde{#1}}
\renewcommand{\d}[1]{\mathrm d \boldsymbol{#1}}
\newcommand{\revise}[1]{#1}
\newcommand{\substitute}[2]{{#2}}
\def\MTh{\mc{T}_h}
\def\MThG{\mc{T}_h^\Gamma}
\def\MThB{\mc{T}_h^{\backslash \Gamma}}
\def\MEh{\mc{E}_h}
\def\MEhG{\mc{E}_h^\Gamma}
\def\MEhB{\mc{E}_h^{\backslash \Gamma}}
\def\un{\bm{\mr n}}
\newtheorem{assumption}{Assumption}
\newtheorem{theorem}{Theorem}
\newtheorem{lemma}{Lemma}
\newtheorem{remark}{Remark}
\begin{document}

\title[Interface Problem]{A Discontinuous Galerkin Method by Patch
  Reconstruction for Elliptic Interface Problem on Unfitted Mesh}

\author[R. Li]{Ruo Li} \address{CAPT, LMAM and School of Mathematical
  Sciences, Peking University, Beijing 100871, P.R. China}
\email{rli@math.pku.edu.cn}

\author[F.-Y. Yang]{ Fanyi Yang} \address{School of Mathematical
  Sciences, Peking University, Beijing 100871, P.R. China}
\email{yangfanyi@pku.edu.cn}

\maketitle

\begin{abstract}
  We propose a discontinuous Galerkin (DG) method to approximate the
  elliptic interface problem on unfitted mesh using a new
  approximation space. The approximation space is constructed by patch
  reconstruction with one degree of freedom per element. 
  \substitute{The optimal
  error estimates in both $L^2$ norm and DG energy norm are obtained,
  without the typical constraints for DG method on how the interface
  intersects to the elements in the mesh.}{\revise{
  The optimal error estimates in both $L^2$ norm and DG energy norm
  are obtained, without restrictions on how the interface intersects
  the elements in the mesh.  The stability near the interface is
  ensured by the patch reconstruction and no special numerical flux is
  required. }}
  The convergence order by
  numerical results in both 2D and 3D agrees with the error estimates
  perfectly. More than enjoying the advantages of DG method, the new
  method may achieve even better efficiency in number of degree of
  freedom than the conforming finite element method as illustrated by
  our numerical examples.

  \noindent \textbf{keywords}: Elliptic interface problems, Patch
  reconstructed, Discontinuous Galerkin method, Unfitted mesh.

\end{abstract}


\section{Introduction}
\label{sec:intro}

In the last decades, numerical methods for the elliptic interface
problem have attracted pervasive attention since the pioneering work
of Peskin \cite{peskin1977blood}, for example, the immersed interface
method by LeVeque and Li \cite{Leveque1994immersed, Li2006immersed},
Mayo's method on irregular regions \cite{Mayo1985fast}, the method in
\cite{Yu2007matched} with second-order accuracy in the $L^\infty$
norm. In the finite difference fold, we also refer to
\cite{Liu2000boundary, Hou2005numerical, Hou2010numerical,
  Fedkiw1999ghost, Chen2008piecewise, Oevermann2006Cartesian} for some
other interesting methods. Meanwhile, finite element (FE) method is
also popular for solving the interface problem. Based on the
geometrical relationship between the grid and the interface, FE
methods could be classified into two categories: interface-fitted
method and interface-unfitted method. The body-fitted grid enforces
the mesh to align with the interface to render a high-order accurate
approximation \cite{Chen1998interface, Barrett1987fitted}. However,
generating a fitted mesh with satisfied quality is sometimes a
nontrivial and time-consuming task \cite{Wu2012unfitted,
Xiao2020high}. Therefore, there are some techniques for FE methods
based on unfitted grid. The unfitted FE method can date back to the
\cite{Babuska1970discontinuous}, which introduced a penalty term to
weakly enforce the jump on the interface. Li proposed the immersed FE
method in \cite{Li1998immersed}, which processes a better approximate
solution by modifying the basis functions near interface to capture
the jump of the solution. We refer to \cite{Na2014partially,
Lin2015partially, Adjerid2015immersed, Wang2009immersed,
Cao2017immersed, Guo2019higher} for some recent works.  Let us note
that the extended FE method is also a popular discretization method
\cite{Belytschko1999elastic}.

In 2002, A. Hansbo and P. Hansbo proposed an unfitted FE method with
the piecewise linear space and proved an optimal order of convergence
\cite{Hansbo2002unfittedFEM}. The numerical solution comes from
separate solutions defined on each subdomain and the jump conditions
are imposed weakly by Nitsche's method. Wadbro et al.
\cite{Wadbro2013uniformly} developed a uniformly well-conditioned FE
method based on Nitsche's method. Wu and Xiao \cite{Huang2017unfitted,
Wu2012unfitted} presented a $hp$ unfitted FE method, which is extended
to the three dimensional case. 
To achieve high-order accuracy and enjoy additional
flexibility, some authors tried to apply DG method to the elliptic
interface problem, for example the local DG method in
\cite{GuyomarcH2010discontinuous}, the hybridizable DG method in
\cite{Huynh2013hybrid} on fitted mesh, and the $hp$ DG method in
\cite{Massjung2012unfitted} on unfitted mesh.

Though high-order accuracy can be obtained, solid difficulties remain
for DG methods in solving problems with complex interfaces. To fit
curved interfaces, Cangiani et al. \cite{Cangiani2018adaptive}
introduced elements with curved faces to give an adaptive DG method
recently. As one of the latest work on unfitted mesh, Burman and Ern
\cite{Burman2018unfitted} proposed a hybrid high-order method, while
an extra assumption on the meshes are required to ensure the mesh
cells are cut favorably by the interface \cite{Massjung2012unfitted}.
In this paper, we are trying to propose a DG method on unfitted mesh
for the interface problem still using Nitsche's method. The novel
point is that we adopt a new approximation
space by patch reconstruction with one degree of freedom (DOF) per
element following the methodology in \cite{Li2016discontinuous,
  Li2017discontinuous}. The new space may be regarded as a subspace of
the approximation space used in \cite{Massjung2012unfitted}. Thanks to
the flexibility in choosing reconstruction patches, we may allow the
interface to intersect elements in a very general manner, in
comparison to the methods in \cite{Burman2018unfitted,
Massjung2012unfitted}. Following the standard DG discretization, the
elliptic interface problem is approximated by using a symmetric
interior penalty bilinear form with a Nitsche-type penalization at the
interface. The optimal error estimate is then derived in both DG
energy norm and $L^2$ norm. \revise{The patch reconstruction can
provide the stability near the interface and no cut-dependent
numerical flux is used. We note that the idea of using the patch of
interface elements to improve the numerical stability can also be
found in \cite{Guzman2017finite, Chu2010new, Huang2017unfitted,
Guo2019higher}}.  In addition, the classical DG methods for elliptic
problems were challenged \cite{hughes2000comparison,
zienkiewicz2003discontinuous} since it may use more DOFs than
traditional conforming FE methods. As a new observation, we
demonstrate by numerical examples that using our new approximation
space, one needs much less DOFs than classical DG methods. For
high-order approximations, number of DOFs can be even less than
conforming FE methods to achieve the same numerical error.

The rest of this paper is organized as follows. In Section
\ref{sec:space}, we introduce the reconstruction operator and the new
approximation space, and we also give the basic properties of the
approximation space. In Section \ref{sec:estimate}, the approximation
to the elliptic interface problem is proposed and we derive the
optimal error estimate in DG energy norm and $L^2$ norm. In Section
\ref{sec:numericalresults}, we present a lot of numerical examples to
verify the error estimate in Section \ref{sec:estimate}. To show the
performance of our method in efficiency, we make a comparison of
number of DOFs with respect to the numerical error between different
methods. We also solve a problem that admits solutions with low
regularities to illustrate the robustness of our method.


\section{Approximation Space}
\label{sec:space}
Let $\Omega \subset \mb{R}^d$, $d = 2$ or $3$, be a convex and
polygonal (polyhedral) domain with boundary $\partial \Omega $ and
let $\Gamma$ be a $C^2$-smooth interface  which divides $\Omega$ into
two open sets $\Omega_0$ and $\Omega_1$ satisfying $\Omega_0 \cap
\Omega_1 = \varnothing, \ \overline{\Omega} = \overline{\Omega}_0 \cup
\overline{\Omega}_1$ and $\Gamma = \overline{\Omega}_0 \cap
\overline{\Omega}_1$. We denote by $\MTh$ a partition of $\Omega$ into
polygonal (polyhedral) elements. Here we do not require the faces of
elements in $\MTh$ align with the interface (see Fig
\ref{fig:domain}).
\begin{figure}[htb]
  \centering
  \vspace{-10pt}
  \includegraphics[width=3in]{./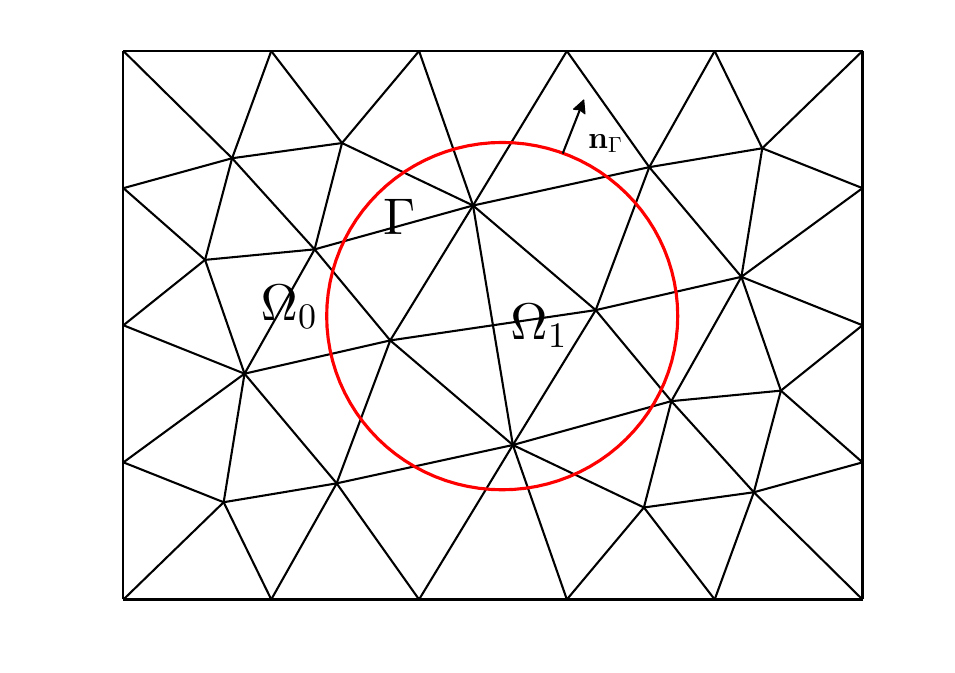}
  \vspace{-15pt}
  \caption{A sample domain and unfitted mesh for $d=2$.}
  \label{fig:domain}
\end{figure}
Let $\mc E^\circ_h$ be the set of all interior faces of $\MTh$, $\mc
E^b_h$ the set of the faces on $\partial \Omega$ and then $\mc E_h =
\mc E_h^\circ \cup \mc E_h^b$.  We set
\begin{displaymath}
  h_K = \text{diam}(K), \quad \forall K \in \MTh, \quad  h_e = |e| ,
  \quad \forall e \in \MEh,
\end{displaymath}
and we denote by $h$ the biggest one among the diameters of all
elements in $\MTh$. We assume that $\MTh$ is share-regular in the
sense of satisfying the conditions introduced in
\cite{Antonietti2013mimetic}, which are: there exist
\begin{itemize}
  \item two positive numbers $N$ and $\sigma$ which are
    independent of mesh size $h$;
  \item a compatible sub-decomposition $\widetilde{\mc{T}}_h$ into
    shape-regular triangles (tetrahedrons);
\end{itemize}
such that 
\begin{itemize}
  \item any polygon (polyhedron) $K \in \MTh$ admits a decomposition
    $\widetilde{\mathcal T}_{h|K}$ which has less than $N$
    shape-regular triangles (tetrahedrons);
  \item the share-regularity of $\widetilde K \in
    \widetilde{\mc{T}}_h$ follows \cite{Ciarlet2002finite}: the ratio
    between $h_{\widetilde K}$ and $\rho_{\widetilde K}$ is bounded by
    $\sigma$: $h_{\widetilde K}/\rho_{\widetilde K} \leq \sigma$ where
    $\rho_{\widetilde K}$ is the radius of the largest ball inscribed
    in $\widetilde K$.
\end{itemize}
The above regularity requirements could bring some useful consequences
which are trivial to verify \cite{Antonietti2013mimetic}:
\newcounter{counter}
\setcounter{counter}{1}
\begin{enumerate}
\item[M\arabic{counter}] there exists a constant $\rho_v$ that only
  depends on $N$ and $\sigma$ such that $\rho_v h_K \leq h_e$ for
  every element $K$ and every edge $e$ of $K$. 
  \addtocounter{counter}{1}
\item[M\arabic{counter}] there exists a constant $\rho_s$ that only
  depends on $N$ and $\sigma$ such that for every element $K$ the
  following holds true
  \begin{displaymath}
    \rho_s \max_{\widetilde K \in \Delta(K)} h_{\widetilde K} \leq
    h_K,
  \end{displaymath}
  where $\Delta(K) = \left\{ K' \in \MTh\ |\ K' \cap K \neq
    \varnothing \right\}$ is the collection of the elements touching
  $K$.
  \addtocounter{counter}{1}
\item[M\arabic{counter}] there exists a constant $\tau$ that only
  depends on $N$ and $\sigma$ such that for every element $K$, there
  is a disk (ball) inscribed in $K$ with center at the point $\bm z_K
  \in K$ and the radius $\tau h_K$.
  \addtocounter{counter}{1}
\item[M\arabic{counter}][{\it Trace inequality}] there exists a
  constant $C$ such that
  \begin{equation}
    \| v\|_{L^2(\partial K)}^2 \leq C\left( h_K^{-1}
      \|v\|_{L^2(K)}^2 + h_K \|\nabla v\|_{L^2(K)}^2 \right), \quad
    \forall v \in H^1(K).
    \label{eq:traceinequality}
  \end{equation}
  \addtocounter{counter}{1}
\item[M\arabic{counter}][{\it Inverse inequality}] there exists a
  constant $C$ such that 
  \begin{equation} \begin{aligned}
      \|\nabla v\|_{L^2(K)} &\leq C h_K^{-1} \|v \|_{L^2(K)}, \quad
      \forall v \in \mb P_m(K), \\
    \end{aligned}
    \label{eq:inverseinequality}
  \end{equation}
  where $\mb P_m(\cdot)$ denotes the polynomial space of degree less
  than $m$.
  \addtocounter{counter}{1}
\end{enumerate}
Let us note that throughout the paper, $C$ and $C$ with a subscript
are generic constants that may be different from line to line but are
independent of the mesh size $h$ \substitute{.}{\revise{and how the
interface cuts the mesh.}} Given a bounded domain $D \subset
\mb{R}^d$ and an integer $r \geq 0$, we would use the standard
notations and definitions for the spaces $H^r(D)$, $L^r(D)$ and their
corresponding inner products and norms. Then we will use the following
notations related to the partition:
\begin{displaymath}
   \begin{aligned}
     e^0 &= e \cap \Omega_0, &&\quad e^1 = e\cap \Omega_1, \quad
     \forall e \in \MEh, \\
     K^0 &= K \cap \Omega_0, &&\quad K^1  = K \cap \Omega_1, \quad
     \forall K \in \MTh,  \\
     (\partial K)^0 &= \partial K \cap \Omega_0, &&\quad  (\partial
     K)^1 = \partial  K \cap \Omega_1, \quad \forall K \in \MTh, \\
   \end{aligned}
   \end{displaymath}
     \begin{displaymath} 
       \begin{aligned}
     \MTh^0 &= \left\{ K \in \MTh\ |\ |K^0| > 0 \right\}, 
     \quad
     &&\MTh^1 = \left\{ K \in \MTh\ |\ |K^1| > 0 \right\}, \\
     \MEh^0 &= \left\{ e \in \MEh\ |\ |e^0| > 0 \right\}, 
     \quad
     &&\MEh^1 = \left\{ e \in \MEh\ |\ |e^1| > 0 \right\}. \\
   \end{aligned}
\end{displaymath}
Furthermore, we denote by $\MTh^\Gamma = \left\{ K \in \MTh\ |\  K
\cap \Gamma \neq \varnothing \right\}$ the set of the elements that
are divided by $\Gamma$ and by $\MEh^\Gamma = \left\{ e \in \MEh\ |\ e
\cap \Gamma \neq \varnothing \right\}$ the set of the faces that are
divided by $\Gamma$. We set $\MThB = \MTh \backslash \MTh^\Gamma$ and
$\MEhB = \MEh \backslash \MEh^\Gamma$. \substitute{For
element}{\revise{For an element}} $K \in \MTh^\Gamma$ we denote
$\Gamma_K = K \cap \Gamma$.

We make the following assumptions about the mesh, which are actually
easy to be fulfilled.
\begin{assumption}
  \substitute{We assume any interface slide in $\Gamma_K$ is
  contained entirely in one element, and we assume that for each
  element $K \in \MTh^\Gamma$ the interface $\Gamma$ intersects its
  boundary $\partial K$ twice and each open face at most once.}{
  \revise{For any face $e \in \MEhG$, the intersection $e \cap \Gamma$
  is simply connected; that is, $\Gamma$ does not intersect an
  interior face multiple times.}
  }
\end{assumption}

\begin{assumption}
  \revise{For any element $K \in \MThG$, there exists a line (plane)
  $\wt{\Gamma}_K$ and a smooth function $\psi$ that maps
  $\wt{\Gamma}_K$ onto $\Gamma_K$.}
\end{assumption}

\begin{figure}
  \centering
  \begin{minipage}[t]{0.3\textwidth}
    \begin{center}
      \begin{tikzpicture}[scale=2.5]
        \draw[thick, black] (-1, 0) -- (0, 0.5) -- (0, -0.5) --
        (-1, 0);
        \draw[thick, black] (-0.7, 0.35) to [out=10, in=100] (0.25,
        -0.3);
        \node at (0.2, 0.1) {$\Gamma$};
        \node at (-0.35, 0) {$K$};
      \end{tikzpicture}
    \end{center}
  \end{minipage}
  \begin{minipage}[t]{0.3\textwidth}
    \begin{center}
      \begin{tikzpicture}[scale=2.5]
        \draw[fill, gray] (-0.5, 0.15) to [out=-52, in = 160] (-0,
        -0.15) to [out=30, in = 200] (0.5, 0.15)
        to [out=160, in=20] (-0.5, 0.15);
        \draw[thick, black] (-0.15, -0.5) -- (-0.6, -0.2) -- (0, 0.5) --
        (0.6, -0.2) -- (-0.15, -0.5);
        \draw[thick, black] (0, 0.5) -- (-0.15, -0.5);
        \draw[thick, dashed, black] (-0.6, -0.2) -- (0.6, -0.2);
        \node at (0.35, 0.32) {$\Gamma$};
        \node at (0, -0.3) {$K$};
      \end{tikzpicture}
    \end{center}
  \end{minipage}
  \begin{minipage}[t]{0.3\textwidth}
    \begin{center}
      \begin{tikzpicture}[scale=2.5]
        \draw[fill, gray] (-0.3, 0.5) to [out=200, in=60] (-0.53,
        0.15) to [out=-30, in = 130] (0.3, -0.5) to [out = 50, in =
          250]
        (0.39, -0.1) to [out=130, in = -30] (-0.3, 0.5);
        \draw[thick, black] (-0.15, -0.5) -- (-0.6, -0.2) -- (0, 0.5) --
        (0.6, -0.2) -- (-0.15, -0.5);
        \draw[thick, black] (0, 0.5) -- (-0.15, -0.5);
        \draw[thick, dashed, black] (-0.6, -0.2) -- (0.6, -0.2);
        \node at (-0.53, 0.39) {$\Gamma$};
        \node at (-0.22, -0.3) {$K$};
      \end{tikzpicture}
    \end{center}
  \end{minipage}
  \caption{Examples of cut elements in two dimensions (left) / in
  three dimensions (middle and right).}
  \label{fig:cutelements}
\end{figure}

\begin{assumption}
  For any element $K \in \MThG$, there exist two elements $K^0_\circ,
  K^1_\circ \in \Delta(K)$ such that $K^0_\circ \subset \Omega^0$ and
  $K^1_\circ \subset \Omega^1$.
\end{assumption}

\begin{figure}[htp]
  \centering
  \begin{tikzpicture}[scale=12]
    \draw[fill, red] (0.498276269975913,0.407637982286526) --
    (0.578438529450671,0.450991528710196) --
    (0.498559923224131,0.504617017118447);
    \draw[black, line width=0.85pt] (0.38, 0.39) arc (270:350:0.25);
    \node[below right] at (0.5-0.008, 0.5-0.015) {$K$};
    \node[left] at(0.42, 0.5) {$\Omega_0$};
    \node[left] at(0.72, 0.5) {$\Omega_1$};
    \node[left] at(0.67, 0.58) {$\Gamma$};
    \node[] at(0.47, 0.55) {$K_\circ^0$};
    \node[] at(0.552, 0.40) {$K_\circ^1$};
    \draw[thick, black] (0.419151595178986,0.450734782421807) --
    (0.425396419863252,0.355202235860487);
    \draw[thick, black] (0.498206885483482,0.329198010463432) --
    (0.425396419863252,0.355202235860487);
    \draw[thick, black] (0.498276269975913,0.407637982286526) --
    (0.425396419863252,0.355202235860487);
    \draw[thick, black] (0.498276269975913,0.407637982286526) --
    (0.419151595178986,0.450734782421807);
    \draw[thick, black] (0.413298247684544,0.546759345510935) --
    (0.419151595178986,0.450734782421807);
    \draw[thick, black] (0.498276269975913,0.407637982286526) --
    (0.498206885483482,0.329198010463432);
    \draw[thick, black] (0.571497373411646,0.354810571526876) --
    (0.498206885483482,0.329198010463432);
    \draw[thick, black] (0.498276269975913,0.407637982286526) --
    (0.571497373411646,0.354810571526876);
    \draw[thick, black] (0.498559923224131,0.504617017118447) --
    (0.498276269975913,0.407637982286526);
    \draw[thick, black] (0.498559923224131,0.504617017118447) --
    (0.419151595178986,0.450734782421807);
    \draw[thick, black] (0.498559923224131,0.504617017118447) --
    (0.413298247684544,0.546759345510935);
    \draw[thick, black] (0.578438529450671,0.450991528710196) --
    (0.498276269975913,0.407637982286526);
    \draw[thick, black] (0.578438529450671,0.450991528710196) --
    (0.571497373411646,0.354810571526876);
    \draw[thick, black] (0.578438529450671,0.450991528710196) --
    (0.498559923224131,0.504617017118447);
    \draw[thick, black] (0.498559923224131,0.504617017118447) --
    (0.498256744819994,0.620448915560764);
    \draw[thick, black] (0.498256744819994,0.620448915560764) --
    (0.413298247684544,0.546759345510935);
    \draw[thick, black] (0.65771661788519,0.395308762259631) --
    (0.571497373411646,0.354810571526876);
    \draw[thick, black] (0.578438529450671,0.450991528710196) --
    (0.65771661788519,0.395308762259631);
    \draw[thick, black] (0.585303658150289,0.548914734630151) --
    (0.578438529450671,0.450991528710196);
    \draw[thick, black] (0.585303658150289,0.548914734630151) --
    (0.498559923224131,0.504617017118447);
    \draw[thick, black] (0.585303658150289,0.548914734630151) --
    (0.498256744819994,0.620448915560764);
    \draw[thick, black] (0.660936451912223,0.493780615176984) --
    (0.578438529450671,0.450991528710196);
    \draw[thick, black] (0.660936451912223,0.493780615176984) --
    (0.65771661788519,0.395308762259631);
    \draw[thick, black] (0.660936451912223,0.493780615176984) --
    (0.585303658150289,0.548914734630151);
  \end{tikzpicture}
  \caption{The collection $\Delta(K)$, $K_\circ^0$ and $K_\circ^1$ .}
  \label{fig:collection}
\end{figure}
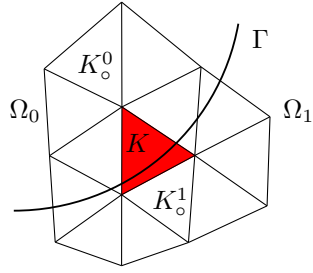
\revise{We note that Assumption 1 and Assumption 2 ensure the
interface $\Gamma$ is well resolved by the mesh
\cite{Massing2014stabilized} and such similar geometric assumptions are
commonly used in numerically solving interface problems
\cite{Massjung2012unfitted, Hansbo2002unfittedFEM, Wu2012unfitted,
Wadbro2013uniformly, Burman2018unfitted}. In Fig.
\ref{fig:cutelements}, we present some examples of cut elements to
illustrate the assumptions. 
}

For the given partition $\MTh$, we follow the idea in
\cite{Li2016discontinuous, Li2017discontinuous} to define the
reconstruction operator for solving the elliptic interface
problem. First, for every element $K \in \MTh$, we specify its
barycenter $x_K$ as a sampling point. Second, for each element
$K \in \MTh^i \backslash \MThG (i = 0, 1)$, we will construct an
element patch $S^i(K)$ for $K$. The element patch $S^i(K)$ is
a set of elements and consists of $K$ and some elements around $K$.
We start from assigning a threshold value $\# S(K)^i$ that is used to
control the size of $S^i(K)$, and we setup the element patch $S^i(K)$
in a recursive manner. Let $S_0^i(K) = \left\{ K \right\}$, then we
define $S^i_t(K)$ as
\begin{displaymath}
  S_t^i(K) = \bigcup_{\tiny
  \begin{aligned}
    \widetilde{K} \in \MTh^i ,& \widehat{K} \in S_{t - 1}^i(K) \\
    \widetilde{K} \cap \widehat{K} &= e \in \mathcal E_h
  \end{aligned}
} \widetilde{K}, \quad t = 1, 2, \cdots 
\end{displaymath}
Once $S^i_t(K)$ has collected $\# S^i(K)$ elements, we stop the
procedure and let $S^i(K) = S^i_t(K)$. Clearly, the cardinality of
$S^i(K)$ is just the value $\# S^i(K)$. For any element $K \in \MThB$,
we only construct one element patch which satisfies that if $K \in
\MTh^i \backslash \MThG$ then $S^i(K) \subset \MTh^i$.  For any
element $K \in \MThG$, we assume that $K \in S^0(K_\circ^0)$ and $K
\in S^1(K_\circ^1)$ where $K^0_\circ$ and $K^1_\circ$ are defined in
Assumption 2. With $\# S^0(K_\circ^0)$ and $\# S^1(K_\circ^1)$ to be
mildly greater than that in \cite{Li2016discontinuous,
Li2017discontinuous}, the assumption can be fulfilled according to the
method of constructing the element patch. Consequently, for each
element $K \in \MThG$, we have two element patches $S^0(K) =
S^0(K_\circ^0)$ and $S^1(K) = S^1(K_\circ^1)$. \substitute{Some
examples are presented in Appendix \ref{sec:cep} to illustrate the
construction of the element patch.}{\revise{In Appendix \ref{sec:cep},
we present the detailed algorithm and give some examples to illustrate
the construction of the element patch.}}

For any element $K \in \MTh$, we denote by $\mc I_K^i(i= 0 , 1)$ the
set of sampling points located inside $S^i(K)$,
\begin{displaymath}
  \mc I_K^i =  \left\{ \bm x_{\widetilde K} \ |\ \forall \widetilde K
  \in S^i(K)\right\}.
\end{displaymath}
For any function $g \in C^0(\Omega)$ and an element $K \in \MTh$, we
seek a polynomial $\mc R_K^i g$ defined on $S^i(K)$ of degree $m$ by
solving the following least squares problem:
\begin{equation}
  \mc R_K^i g = \mathop{\arg \min}_{p \in \mathbb P_m(S^i(K))} \sum_{
  \bm x \in \mc I^i_K} |p(\bm x) - g(\bm x)|^2.
  \label{eq:leastsquare}
\end{equation}
The existence and uniqueness of the solution to \eqref{eq:leastsquare}
are decided by the position of the sampling nodes in $\mc I_K^i$. Here
we follow \cite{Li2012efficient} to make the following assumption:
\begin{assumption}
  For any element $K \in \MTh$ and $p \in \mathbb P_m(S^i(K))$, 
  \begin{displaymath}
    p|_{\mc I^i_K} = 0 \quad \text{implies} \quad p|_{S^i(K)} \equiv 0,
    \quad i = 0, 1.
  \end{displaymath}
\end{assumption}
This assumption actually rules out the situation that all the points
in $\mc I^i_K$ are located on an algebraic curve of degree
$m$. Definitely, this assumption requires the cardinality $\# S^i(K)$
shall be greater than $\text{dim}(\mathbb P_m)$. Hereafter, we always
require this assumption holds.

Since the solution to \eqref{eq:leastsquare} is linearly dependent on
$g$, we define two interpolation operators $\mc R^i$ for $g$:
\begin{displaymath}
  \begin{aligned}
    (\mc R^0 g)|_K &= (\mc R^0_K g)|_K, \quad \text{for } \forall K
    \in \MTh^0, \\ 
    (\mc R^1 g)|_K &= (\mc R^1_K g)|_K, \quad \text{for } \forall K
    \in \MTh^1.  \\
  \end{aligned}
\end{displaymath}
Given $\mc R^i(i= 0, 1)$ and $g \in C^0(\Omega)$, the function $g$ is
mapped to a piecewise polynomial function of degree $m$ on
$\MTh^i$. We denote by $V_h^i$ the image of the operator $\mc
R^i$. \revise{For any element $K$, We pick up a function $w_K^i(\bm
x) \in C^0(\Omega)$ such that}
\begin{displaymath}
  w_K^i(\bm x) = \begin{cases}
    1, \quad \bm x = \bm x_K, \\
    0, \quad \bm x \in \widetilde K, \quad \widetilde K \neq K.
  \end{cases} 
\end{displaymath}
\revise{It should be noted that in element $K$ we do not care about
the values of $w_K^i(\bm{x})$ at $\bm{x} \neq \bm{x}_K$ and such
continuous functions obviously exist. }
Then it is easy to check that $V_h^i = \mathrm{span} \left\{
\lambda_K^i\ |\ \lambda^i_K = \mc R^i w_K^i \right\}$, and one can
write the operator $\mc R^i$ in an explicit way:
\begin{displaymath}
  \mc R^i g = \sum_{K \in \MTh^i} g(\bm x_K) \lambda_K^i(\bm x), \quad
  \forall g \in C^0(\Omega).
\end{displaymath}
In Appendix \ref{sec:1dexample}, we present a one-dimensional example
to show more details of construction of $\lambda_K^i$ and its computer
implementation.

\begin{remark}
  \revise{
  The computational cost of constructing the approximation spaces
  $V_h^0$ and $V_h^1$ mainly consists of two parts. The first is the
  construction of element patches. We adopt a recursive strategy on
  every element for construction as we illustrate in Appendix
  \ref{sec:cep}. The number of recursive steps is related to the order
  $m$ and in numerical experiments we take $1 \leq m \leq 3$. We also
  list the values $\# S^i(K)$ that are used in numerical experiments
  in Section \ref{sec:numericalresults}.  In this case at most $6$
  recursive steps are required on each element.  Hence this part is
  very cheap.  The second part is to solve the function $\lambda_K^i$
  on each element. In this part, the main step is to solve an inverse
  of a $\# S^i(K) \times \# S^i(K)$ matrix, as we demonstrate in
  Appendix \ref{sec:1dexample}.  Thus, the computational cost of the
  second part is still small.
  }
\end{remark}

The operators $\mc R^i(i = 0, 1)$ are defined for functions in
$C^0(\Omega)$, while we only concern the case for the functions in
$H^t(\Omega_0 \cup \Omega_1)(t \geq 2)$. 
Hence, we \substitute{chosse}{\revise{choose}} two extension operators
to extend the functions in $H^t(\Omega_0 \cup \Omega_1)$ to be defined
in $H^t(\Omega)$ \cite{Adams2003sobolev}. For any function $w \in
H^t(\Omega_0 \cup \Omega_1)$, there exist two operators $E^i:
H^t(\Omega_i) \rightarrow H^t(\Omega)$ such that $(E^i w)|_{\Omega_i}
= w$ and
\begin{equation}
  \|E^iw\|_{H^s(\Omega)} \leq C \|w\|_{H^s(\Omega_i)}, \quad 0 \leq s
  \leq t.
  \label{eq:extension}
\end{equation}

Now let us study the approximation property of the operator $\mc R^i$.
We define $\Lambda(m, S^i(K))$ for all element patches as
\begin{displaymath}
  \Lambda(m, S^i(K)) = \max_{p \in \mb P_m(S^i(K))}
  \frac{\max_{\bm x \in S^i(K)}|p(\bm x)|}{\max_{\bm x \in \mc I_K^i}
  |p(\bm x)|}.
\end{displaymath}
We note that under some mild conditions on $S^i(K)$, $\Lambda(m,
S^i(K))$ admits a uniform upper bound $\Lambda_m$, which is crucial in
the convergence analysis. \substitute{ We refer to
\cite{Li2012efficient, Li2016discontinuous} for the conditions and
more discussion about the uniform upper bound.  Here, we point out
that one of the conditions is the value $\# S^i(K)$ should be greater
than $\text{dim}(\mathbb P_m)$. In Section \ref{sec:numericalresults},
we list the values of $\# S^i(K)$ for different $m$ in all numerical
examples.}{\revise{We refer to \cite[Assumption
A]{Li2016discontinuous} for the geometrical conditions on element
patches. These conditions in fact exclude the case that the points in
$\mc{I}_K^i$ are very close to an algebraic curve of degree $m$.
We also proved that if the size of the element
patch $S^i(K)$ is greater than a certain number, then the geometrical
conditions will be satisfied, see  \cite[Lemma 6]{Li2016discontinuous}
and \cite[Lemma 3.4]{Li2012efficient}. We note that that this number
is usually too great and we prefer not to adopt it in the
implementation. In numerical tests, we observe that out method can
still work very well under the case that the value $\# S^i(K)$ is far
less than the theoretical value. In Section
\ref{sec:numericalresults}, we list the values of $\# S^i(K)$ that are
used in numerical examples. In addition, we refer to
\cite{Li2012efficient} for some numerical experiments about the size
of the element patch and the upper bound $\Lambda_m$.
}}

\begin{remark}
  \revise{ For a special case when $\# S^i(K) =
  \text{dim}(\mb{P}_m)$, we may replace the constant $\Lambda(m,
  S^i(K))$ by the Lebesgue constant
  \cite[p.24]{Powell1981approximation}. In this case, the solution to
  the problem \eqref{eq:leastsquare} is the Lagrange interpolation
  polynomial. Unfortunately, we have little knowledge of the Lebesgue
  constant in two or three dimensions.
  }
\end{remark}


With $\Lambda_m$, we have the local approximation error estimates. 
\begin{theorem}
  Let $g \in H^t(\Omega_0 \cup \Omega_1)(t \geq 2)$, there exist
  constants $C$ such that for any $K \in \MTh^i(i = 0, 1)$ the
  following estimates hold true:
  \begin{equation}
    \begin{aligned}
      \|E^ig - \mc R^i(E^ig)\|_{H^q(K)} &\leq C \Lambda_m h_K^{s -
      q} \| E^ig\|_{H^t(S^i(K))}, \quad q = 0, 1,\\
      \| \substitute{D}{\nabla}^q(E^ig - \mc
      R^i(E^ig))\|_{L^2(\partial K )} & \leq C\Lambda_m h_K^{s - q-
      1/2} \|E^ig\|_{H^t(S^i(K))}, \quad q = 0, 1,\\
    \end{aligned}
    \label{eq:interpolation}
  \end{equation}
  where $s = \min(t + 1, m)$.
\end{theorem}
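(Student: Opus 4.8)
The plan is to combine the exactness of $\mc R^i$ on polynomials with a Bramble--Hilbert estimate, using the stability constant $\Lambda_m$ to control the least-squares operator, and to deduce the boundary bounds from the interior ones via the trace inequality. First I would record the key reproduction property: if $p \in \mb P_m(S^i(K))$, then $p$ itself makes the objective in \eqref{eq:leastsquare} vanish, so by the uniqueness guaranteed by Assumption 3 we have $\mc R_K^i p = p$. This polynomial invariance is what lets the approximation power of $\mb P_m$ pass through the reconstruction.

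Next, fix $K \in \MTh^i$ and let $\Pi$ denote the averaged Taylor polynomial (equivalently a suitable local $L^2$-projection) of $E^i g$ onto $\mb P_m(S^i(K))$. Writing $\phi = E^i g - \Pi$ and using linearity together with $\mc R_K^i \Pi = \Pi$, we obtain the splitting
\begin{displaymath}
  \left(E^i g - \mc R^i(E^i g)\right)\big|_K = \phi\big|_K - \mc R_K^i \phi\big|_K .
\end{displaymath}
The first term is handled by the classical Bramble--Hilbert lemma on the patch $S^i(K)$. Here the extension operator $E^i$ is essential: it makes $E^i g$ a genuine $H^t(\Omega)$ function on the whole patch, including the part of $S^i(K)$ lying in the other subdomain, so standard polynomial approximation theory applies. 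This gives $\|\phi\|_{H^q(S^i(K))} \leq C\, h_{S^i(K)}^{s-q}\, |E^i g|_{H^s(S^i(K))}$ with $s = \min(t+1,m)$, and since $K \subset S^i(K)$ and $s \le t$ this controls $\|\phi\|_{H^q(K)}$ by $\|E^i g\|_{H^t(S^i(K))}$.

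The second term is where $\Lambda_m$ enters. Because $p \equiv 0$ is admissible in \eqref{eq:leastsquare}, the minimizing property yields $\sum_{\bm x \in \mc I_K^i} |\mc R_K^i \phi(\bm x) - \phi(\bm x)|^2 \leq \sum_{\bm x \in \mc I_K^i} |\phi(\bm x)|^2$, whence $\max_{\bm x \in \mc I_K^i} |\mc R_K^i \phi(\bm x)| \leq C \max_{\bm x \in \mc I_K^i} |\phi(\bm x)|$ after using that $\# S^i(K)$ is uniformly bounded. Invoking the definition of $\Lambda(m, S^i(K)) \le \Lambda_m$ promotes this nodal bound to $\|\mc R_K^i \phi\|_{L^\infty(S^i(K))} \leq C \Lambda_m \|\phi\|_{L^\infty(S^i(K))}$. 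A Sobolev embedding on the patch (valid in two dimensions for the relevant $s$) bounds $\|\phi\|_{L^\infty(S^i(K))}$ by $\|\phi\|_{H^s(S^i(K))}$, and Bramble--Hilbert again supplies the $h_K^{s}$ factor; since $\mc R_K^i \phi$ is a polynomial, the inverse inequality M5 converts this $L^\infty$ bound into the $H^q(K)$ bound, each derivative costing a power $h_K^{-1}$. Combining the two terms and using the mesh regularity M2 to write $h_{S^i(K)} \le C h_K$ yields the interior estimate for $q = 0, 1$. Finally, applying the trace inequality M4 to $D^q\!\left(E^i g - \mc R^i(E^i g)\right)$ on each $K$, together with the $q = 0$ and $q = 1$ interior estimates just proved, produces the boundary estimates.

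I expect the main obstacle to be the second term. The least-squares minimizer does not interpolate the data, so one cannot simply assert that $\mc R_K^i \phi$ equals $\phi$ at the sampling nodes; the minimality argument above and the subsequent passage through $\Lambda_m$ and the Sobolev embedding must be carried out while tracking every power of $h_K$ through a scaling to a reference patch of unit size. Getting this $h_K$-bookkeeping consistent across the reproduction step, the two Bramble--Hilbert estimates, and the inverse inequality is the delicate part of the whole argument.
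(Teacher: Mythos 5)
Your proposal follows essentially the same route as the paper's actual proof: the paper disposes of this theorem with a one-line citation to Lemma 2.4 of \cite{li2016discontinuous} (resp.\ Lemma 2.5 of \cite{li2017discontinuous}), and the argument in those references is precisely your decomposition --- polynomial reproduction $\mc R_K^i p = p$ via uniqueness from Assumption 3, nodal stability of the least-squares minimizer promoted to patchwise $L^\infty$ stability through $\Lambda(m,S^i(K)) \le \Lambda_m$, Bramble--Hilbert for the extended function $E^i g$ on the patch with scaling to a reference patch of unit size (using M2 to compare the patch diameter with $h_K$), inverse estimates for the polynomial remainder, and a trace inequality for the boundary bounds. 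So there is no methodological divergence; you have reconstructed the delegated proof.

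Two points need repair before the argument closes. First, your final step is stated too loosely: to obtain the $q=1$ boundary estimate from M4 you must apply the trace inequality to each component of $\nabla\bigl(E^ig - \mc R^i(E^ig)\bigr)$, and the right-hand side of \eqref{eq:traceinequality} then involves \emph{second} derivatives, so you need a $q=2$ interior bound $\|\nabla^2(E^ig - \mc R^i(E^ig))\|_{L^2(K)} \le C\Lambda_m h_K^{s-2}\|E^ig\|_{H^t(S^i(K))}$, which the $q=0,1$ estimates you proved do not supply. It follows by the identical splitting (Bramble--Hilbert for $\phi$ in the $H^2$-seminorm, the inverse inequality M5 applied twice to the polynomial $\mc R_K^i\phi$), so the fix is routine, but as written the deduction is incomplete. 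Second, your parenthetical ``since $s \le t$'' holds only when $m \le t$: if $m \ge t+1$ the stated exponent $s=\min(t+1,m)=t+1$ exceeds the available regularity, and no version of this argument can extract $h^{t+1}$ from $H^t$ data --- the exponent in the theorem statement is evidently a slip for $s=\min(m+1,t)$, which is what the paper itself uses from Lemma \ref{th:appDGnorm} onward and what your Bramble--Hilbert step actually delivers; your proof is correct for that corrected exponent, and you should say so explicitly rather than echo the misprinted one.
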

\begin{proof}
  It is a direct consequence of \cite[lemma 2.4]{Li2016discontinuous}
  or \cite[lemma 2.5]{Li2017discontinuous}.
\end{proof}

Finally, we give the definition of our approximation space $V_h$ by
concatenaing the two spaces $V_h^0$ and $V_h^1$. Let us define a
global interpolation operator $\mc R$: for any function
$w \in H^t(\Omega_0 \cup \Omega_1)$, $\mc Rw$ is piecewise defined by
\begin{displaymath}
  (\mc Rw)|_K \triangleq \begin{cases}
    (\mc R_K^0 E^0 w)|_K, \quad \text{for } K \in \MTh^0 \backslash
    \MThG,
    \\
    (\mc R_K^1 E^1 w)|_K, \quad \text{for } K \in \MTh^1 \backslash
    \MThG, \\
    (\mc R_K^i E^i w)|_{K^i}, \quad \text{for } K \in \MThG, \quad i =
    0, 1. \\
  \end{cases}
\end{displaymath}
The image of $\mc R$ is actually our new approximation space $V_h$. We
notice that for any function $w \in H^t(\Omega_0 \cup \Omega_1)$,
$\mc Rw$ is a combination of $\mc R^0 w$ and $\mc R^1w$ that
$(\mc Rw)|_{K^i} = (\mc R^iw)_{K^i}(i = 0, 1)$, and the approximation
error estimates of $\mc R$ are the direct consequence from
\eqref{eq:interpolation}.


\section{Approximation to Elliptic Interface Problem}
\label{sec:estimate}
We consider the standard elliptic interface problem: find $u$ in
$H^2(\Omega_0 \cup \Omega_1)$ such that
\begin{equation}
  \begin{aligned}
    -\nabla \cdot \beta \nabla u &= f ,\quad \bm x \in \Omega_0 \cup
    \Omega_1, \\
    u &= g, \quad \bm x \in \partial \Omega, \\
    \jump{u} &= a \bm{\mr n}_\Gamma, \quad \bm x \in \Gamma, \\
    \jump{\beta \nabla u\cdot  \bm{\mr n}_\Gamma} &= b \bm{\mr
    n}_\Gamma, \quad \bm x \in \Gamma, \\
  \end{aligned} 
  \label{eq:elliptic}
\end{equation}
where $\beta$ is a positive constant function on $\Omega_i(i = 0, 1)$
but may be discontinuous across the interface $\Gamma$, and $\bm{\mr
n}_\Gamma$ denotes the \substitute{outward unit normal to
$\Gamma$.}{\revise{the unit normal of $\Gamma$ pointing to
$\Omega_0$ (see Fig \ref{fig:domain}).}} The source term $f$, the
Dirichlet data $g$ and the jump term $a$, $b$ are assumed to be in
$L^2(\Omega)$, $H^{3/2}(\partial \Omega)$, $H^{3/2}(\Gamma)$,
$H^{1/2}(\Gamma)$, respectively, to ensure \eqref{eq:elliptic} has a
unique solution. We refer to \cite{Roitberg1969theorem,
Kellogg1975poisson, Petzoldt2001regularity, Kellogg1972higher} for
more details. In \eqref{eq:elliptic}, the jump operator $\jump{\cdot}$
takes the standard sense in DG framework. More precisely, we define
the jump operator $\jump{\cdot}$ and average operator $\aver{\cdot}$
as below,
\begin{displaymath}
  \begin{aligned}
    \jump{\bm{q}} = \begin{cases}
      \bm{q}|_{K_+} \cdot \bm{\mr{n}}_{K_+} +  \bm{q}|_{K_-} \cdot
      \bm{\mr{n}}_{K_-} \\ 
      \bm{q}|_{K_+^i} \cdot \bm{\mr{n}}_{K_+} + \bm{q}|_{K_-^i} \cdot
      \bm{\mr{n}}_{K_-} \\
      \bm{q}|_{K} \cdot \bm{\mr{n}}_K  \\
      (\bm{q}|_{K^1} - \bm{q}|_{K^0}) \cdot \bm{\mr n}_\Gamma \\
    \end{cases}
    \jump{v} &= \begin{cases}
      v|_{K_+}\bm{\mr{n}}_{K_+} +  v|_{K_-}\bm{\mr{n}}_{K_-} &\quad 
      \text{on } e
      \in \MEh^\circ \backslash \MEh^\Gamma, \\
      v|_{K_+^i}\bm{\mr{n}}_{K_+} +  v|_{K_-^i}\bm{\mr{n}}_{K_-}
      &\quad \text{on } e \in \MEh^\Gamma \cap \Omega_i(i = 0,
      1),\\
      v|_{K}\bm{\mr{n}}_K & \quad \text{on } e \in \MEh^b, \\
      (v|_{K^1} - v|_{K^0})\bm{\mr n}_\Gamma &\quad \text{on }
      \Gamma_K, \ K \in \MTh^\Gamma, \\
    \end{cases} \\
  \aver{\bm{q}} = \begin{cases}
    \frac{1}{2} (\bm{q}|_{K_+} + \bm{q} |_{K_-}) \\
    \frac12(\bm{q}|_{K_+^i} +  \bm{q} |_{K_-^i}) \\
    \bm{q}|_K \\
    \frac12(\bm{q}|_{K^1} + \bm{q}|_{K^0}) \\
  \end{cases} \hspace{36pt}
  \aver{v}& = \begin{cases}
    \frac{1}{2} (v|_{K_+} + v|_{K_-}) \hspace{25pt} &\quad
    \text{on } e \in \MEh^\circ \backslash \MEh^\Gamma, \\
    \frac12(v|_{K_+^i} +  v|_{K_-^i}) &\quad \text{on } e \in
    \MEh^\Gamma \cap \Omega_i(i = 0, 1), \\
    v|_{K} & \quad \text{on } e \in \MEh^b, \\
    \frac12(v|_{K^1} + v|_{K^0}) &\quad \text{on } \Gamma_K,\  K
    \in \MTh^\Gamma, \\
  \end{cases} \\
  \end{aligned}
\end{displaymath}
where $v$ is a scalar-valued function and $\bm{q}$ is a vector-valued
function. For $e \in \MEh^\circ$, we let $K_+$ and $K_-$ be two
neighbouring elements that share a common face $e$. $\bm{\mr n}_{K_+}$
and $\bm{\mr n}_{K_-}$ are the unit outer normal on $e$ corresponding
to $\partial K_+$ and $\partial K_-$, respectively. In the case $e \in
\MEh^b$, we let $e$ be a face of the element $K$.


Now we define the bilinear form $b_h(\cdot, \cdot)$ and the linear
form $l_h(\cdot)$:
\begin{equation}
  \begin{aligned}
    b_h(u_h, v_h) & =  \sum_{K \in \MTh} \int_{K^0 \cup K^1} \beta
    \nabla u_h \cdot \nabla v_h \d{x} \\
    &- \left[ \sum_{e \in \MEh} \int_{e^0 \cup e^1} + \sum_{K \in
        \MTh^\Gamma} \int_{\Gamma_K} \right] \Big( \jump{u_h} \cdot
    \aver{\beta \nabla v_h} + \jump{v_h} \cdot \aver{\beta
      \nabla u_h} \Big) \d{s} \\
    &+  \sum_{e \in \MEh} \int_{e^0 \cup e^1}
    \frac{\eta}{h_e} \jump{u_h} \cdot \jump{v_h} \d{s} 
    +  \sum_{K \in \MThG} \int_{\Gamma_K}
    \frac{\eta}{h_K} \jump{u_h} \cdot \jump{v_h} \d{s}, \\
  \end{aligned}
  \label{eq:bilinear}
\end{equation}
for $\forall u_h, v_h \in W_h$, and
\begin{displaymath}
  \begin{aligned}
    l_h(v_h)&  = \sum_{K \in \MTh} \int_{K^0 \cup K^1} fv_h \d{x} -
    \sum_{e \in \MEh^b} \int_e g \un \cdot \aver{\beta \nabla v_h}
    \d{s} \\
    & + \sum_{\revise{K \in \MThG}}  \int_{\Gamma_K} b \aver{v_h}
    \d{s} - \sum_{\revise{K \in \MThG}} \int_{\Gamma_K} a \un_\Gamma
    \cdot \aver{\beta \nabla v_h} \d{s}\\ 
    &+ \sum_{e \in \MEh^b} \int_e \frac{\eta}{h_e}  g v_h \d{s} +
    \sum_{\revise{K \in \MThG}} \int_{\Gamma_K} \frac{\eta}{h_K} a
    \un_\Gamma \cdot \jump{v_h} \d{s}, \\
  \end{aligned}
\end{displaymath}
for $\forall v_h \in W_h$, where $W_h$ denotes the following broken
Sobolev space 
\begin{displaymath}
  \begin{aligned}
    W_h = \Big \{ v \in L^2(\Omega)\ \Big|\ &v|_K \in H^2(K),\ \text{for
    } K \in \MThB,\\
  \quad &v|_{K^i} \in H^2(K^i), \  i = 0, 1, \ \text{for } K \in \MThG
  \Big \}. \\
\end{aligned}
\end{displaymath}
The penalty parameter $\eta$ is nonnegative and will be specified
later on. For any $v_h \in W_h$, let us define a DG energy norm
$\enorm{ \cdot}$ as
\begin{displaymath}
  \begin{aligned}
    \enorm{v_h}^2 = \|\nabla v_h\|_{L^2(\MTh^0 \cup \MTh^1)}^2 & +
    \|h_e^{-1/2}\jump{v_h}\|_{L^2(\MEh^0 \cup \MEh^1)}^2 + \|h_e^{1/2}
    \aver{\nabla v_h}\|_{L^2(\MEh^0 \cup \MEh^1)}^2 \\
    & + \|h_K^{-1/2} \jump{v_h}\|_{L^2(\Gamma)}^2 + \|h_K^{1/2}
    \aver{\nabla v_h}\|_{L^2(\Gamma)}^2,  \\
  \end{aligned}
\end{displaymath}
where
\begin{displaymath}
  \begin{aligned}
    \| \nabla v_h \|_{L^2(\MTh^0 \cup \MTh^1)}^2 = \sum_{K \in \MTh}
    \int_{K^0 \cup K^1} | \nabla v_h |^2  &\d{x}, \quad \|h_e^{-1/2}
    \jump{v_h} \|_{L^2(\MEh^0 \cup \MEh^1)}^2 = \sum_{e \in \MEh}
    \int_{e^0 \cup e^1} \frac{1}{h_e} | \jump{v_h} |^2 \d{s}, \\
    \|h_e^{1/2} \aver{\nabla v_h}\|_{L^2(\MEh^0 \cup \MEh^1)}^2 =
    \sum_{e \in \MEh} \int_{e^0 \cup e^1} h_e | \aver{\nabla v_h}|^2
    &\d{s}, \quad \|h_K^{-1/2} \jump{v_h}\|_{L^2(\Gamma)}^2 =\sum_{K
    \in \MTh^\Gamma} \int_{\Gamma_K}\frac{1}{h_K}  |\jump{v_h} |^2
    \d{s}, \\
    \| h_K^{1/2} \aver{\nabla v_h}\|_{L^2(\Gamma)}^2 &= \sum_{K \in
    \MTh^\Gamma} \int_{\Gamma_K} h_K |\aver{\nabla v_h}|^2 \d{s}. \\
  \end{aligned}
\end{displaymath}

The approximation problem to the elliptic interface problem
\eqref{eq:elliptic} is then defined as: {\it find $u_h \in V_h$ such
that}
\begin{equation}
  b_h(u_h, v_h) = l_h(v_h), \quad \forall v_h \in V_h.
  \label{eq:discrete}
\end{equation}
An immediate consequence from the definitions of \substitute{the
bilinear $b_h(\cdot, \cdot)$ and $l_h(\cdot)$}{\revise{the bilinear
form $b_h(\cdot, \cdot)$ and the linear form $l_h(\cdot)$}} is the
validity of the Galerkin orthogonality, which plays a key role in the
error estimate later on.
\begin{lemma}
  Let $u \in H^2(\Omega_0 \cup \Omega_1)$ be the exact solution and
  let $u_h \in V_h$ be the solution to \eqref{eq:discrete}, the
  Galerkin orthogonality holds true: 
  \begin{equation}
    b_h(u - u_h, v_h) = 0, \quad \forall v_h \in V_h.
    \label{eq:orthogonality}
  \end{equation}
  \label{le:orthogonality}
\end{lemma}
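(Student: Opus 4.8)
The plan is to deduce the orthogonality from \emph{consistency}: I will show that the exact solution $u$ of \eqref{eq:elliptic} satisfies $b_h(u, v_h) = l_h(v_h)$ for every $v_h \in V_h$. Since the discrete solution obeys $b_h(u_h, v_h) = l_h(v_h)$ by \eqref{eq:discrete}, subtracting the two identities and using the bilinearity of $b_h(\cdot, \cdot)$ immediately gives $b_h(u - u_h, v_h) = 0$ (note $u \in H^2(\Omega_0 \cup \Omega_1) \subset W_h$, so the form is well defined). Thus the whole content of the lemma is the consistency identity, and the proof reduces to computing $b_h(u, v_h)$ and matching it term by term against the definition of $l_h(v_h)$.

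To compute $b_h(u, v_h)$, I would first integrate by parts the volume term $\sum_K \int_{K^0 \cup K^1} \beta \nabla u \cdot \nabla v_h$ on each subregion $K^i$ separately (legitimate since $u|_{\Omega_i} \in H^2(\Omega_i)$). Invoking the equation $-\nabla \cdot \beta \nabla u = f$ turns the resulting volume integral into $\sum_K \int_{K^0 \cup K^1} f v_h$, which is exactly the first term of $l_h(v_h)$, and leaves behind a collection of boundary flux integrals $\int_{\partial K^i} (\beta \nabla u \cdot \un_{K^i}) v_h$. The next step is to regroup these fluxes via the standard DG integration-by-parts identity, which rewrites the sum of element-boundary contributions over all elements as edge integrals of the form $\jump{v_h} \cdot \aver{\beta \nabla u} + \aver{v_h} \jump{\beta \nabla u \cdot \un}$ on interior edges, plus boundary-face and interface contributions. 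Because $u|_{\Omega_i} \in H^2(\Omega_i)$, the normal flux $\beta \nabla u \cdot \un$ is single valued across every interior edge lying in a fixed subdomain, so $\jump{\beta \nabla u \cdot \un} = 0$ there and the $\aver{v_h}$-type terms drop out; what survives cancels precisely the term $\sum \int \jump{v_h} \cdot \aver{\beta \nabla u}$ appearing in $b_h(u, v_h)$.

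It remains to account for the symmetrising terms $\sum \int \jump{u} \cdot \aver{\beta \nabla v_h}$ and the penalty terms, and here I use the data in \eqref{eq:elliptic}. On interior edges the $H^2$-regularity of $u|_{\Omega_i}$ gives $\jump{u} = 0$, so these edges contribute nothing. On boundary faces $e \in \MEh^b$ the condition $u = g$ yields $\jump{u} = g \un$, which reproduces exactly the Dirichlet terms $-\int_e g \un \cdot \aver{\beta \nabla v_h}$ and $\int_e \frac{\eta}{h_e} g v_h$ of $l_h$. On each interface piece $\Gamma_K$ the jump conditions $\jump{u} = a \un_\Gamma$ and $\jump{\beta \nabla u \cdot \un_\Gamma} = b$ convert the residual flux, symmetrising and penalty terms into the interface contributions $\int_{\Gamma_K} b \aver{v_h}$, $-\int_{\Gamma_K} a \un_\Gamma \cdot \aver{\beta \nabla v_h}$ and the $\eta/h_K$ penalty of $l_h$. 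Collecting everything shows $b_h(u, v_h) = l_h(v_h)$.

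The step I expect to be the main obstacle is the bookkeeping on $\Gamma_K$. There the two one-sided fluxes $\beta \nabla u|_{K^0} \cdot \un_\Gamma$ and $\beta \nabla u|_{K^1} \cdot \un_\Gamma$ must be split into an average and a jump, and one has to track the orientation of $\un_\Gamma$ and the $K^0/K^1$ labelling so that the flux-jump datum $b$ enters with the correct sign and the surviving average flux pairs correctly with $\aver{v_h}$; simultaneously the nonzero interface jump $\jump{u} = a \un_\Gamma$ must be matched against $\aver{\beta \nabla v_h}$ and against the interface penalty. Keeping these sign and side conventions consistent—while the interior and boundary contributions, being routine, cancel or reproduce $l_h$ directly—is where essentially all the care is required.
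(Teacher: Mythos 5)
Your proposal is correct and is exactly the argument the paper has in mind: the paper offers no written proof, declaring the orthogonality ``an immediate consequence from the definitions of $b_h(\cdot,\cdot)$ and $l_h(\cdot)$,'' which is precisely the consistency identity $b_h(u,v_h)=l_h(v_h)$ that you establish by elementwise integration by parts, the DG flux-regrouping identity, the single-valuedness of $u$ and $\beta\nabla u\cdot\bm{\mathrm n}$ across edges interior to each $\Omega_i$, and the data $u=g$ on $\partial\Omega$, $\jump{u}=a\bm{\mathrm n}_\Gamma$, $\jump{\beta\nabla u\cdot\bm{\mathrm n}_\Gamma}=b\bm{\mathrm n}_\Gamma$ on $\Gamma$, followed by subtraction of \eqref{eq:discrete}. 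The interface bookkeeping you flag as the delicate step is indeed where care is needed (the orientation of $\bm{\mathrm n}_\Gamma$ must be taken consistently with the convention $\jump{v}=(v|_{K^1}-v|_{K^0})\bm{\mathrm n}_\Gamma$ for the sign of the $b\aver{v_h}$ term to come out right, and the paper's final penalty term in $l_h$ should read $\jump{v_h}$ rather than $\aver{v_h}$), but your derivation resolves it correctly.
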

\begin{proof}
  By $\jump{u} = 0$ on any $e \in \MEh^\circ$ and $\jump{u} = g \un$
  on any $e \in \MEh^b$, we observe that
  \begin{displaymath}
    \begin{aligned}
      b_h(u, v_h) = \sum_{K \in \MTh} \int_{K^0 \cup K^1} \beta \nabla
      u \cdot \nabla v_h \d{x} & - \sum_{e \in \MEh} \int_{e^0 \cup
      e^1} \jump{v_h} \cdot \aver{\beta \nabla u} \d{s} - \sum_{K \in
      \MThG} \int_{\Gamma_K} \jump{v_h} \cdot \aver{\beta \nabla u}
      \d{s} \\
      & - \sum_{e \in \MEh^b} \int_e g \un \cdot \aver{\beta \nabla
      v_h} \d{s} - \sum_{K \in \MThG} \int_{\Gamma_K}
      a \un_{\Gamma} \cdot \aver{\beta \nabla v_h} \d{s} \\
      & + \sum_{e \in \MEh^b} \int_e \frac{\eta}{h_e} g v_h \d{s} +
      \sum_{K \in \MThG} \int_{\Gamma_K} \frac{\eta}{h_K} a
      \un_{\Gamma} \cdot \jump{v_h} \d{s}. \\
    \end{aligned}
  \end{displaymath}
  Applying integration by parts, we have that 
  \begin{displaymath}
    \begin{aligned}
      \sum_{K \in \MTh} \int_{K^0 \cup K^1} \beta \nabla u \cdot
      \nabla v_h \d{x} & = - \sum_{K \in \MTh} \int_{K^0 \cup K^1}
      \nabla \cdot(\beta \nabla u) v_h \d{x} + \sum_{e \in \MEh}
      \int_{e^0 \cup e^1} \jump{v_h} \cdot (\beta \nabla u) \d{s} \\
      & + \sum_{K \in \MThG} \int_{\Gamma_K} \jump{v_h} \cdot \aver{
      \beta \nabla u} \d{s} + \sum_{K \in \MThG} \int_{\Gamma_K} b
      \aver{v_h} \d{s}. \\
    \end{aligned}
  \end{displaymath}
  Combining above two equations implies $b_h(u_h, v_h) = b_h(u, v_h)$,
  which completes the proof.
\end{proof}

Next we verify the boundedness and coercivity of the bilinear form
$b_h(\cdot, \cdot)$ with respect to the energy norm $\enorm{\cdot}$.
For this purpose, we need to estimate the error on the interface. Here
we first give the discrete trace inequality, which is crucial in the
error estimate.
\begin{lemma}
  For any $K \in \MThG$, there exists a constant $C$ such that 
  \begin{equation}
    \|\nabla^\alpha v_h \|_{L^2(\partial K^i)} \leq C h_K^{-1/2}
    \|\nabla^\alpha v_h\|_{L^2(K_\circ^i)}, \quad \forall v_h \in V_h,
    \quad i = 0, 1, \quad \alpha = 0, 1, 
    \label{eq:traceestimate}
  \end{equation}
  where $\partial K^i = (\partial K)^i \cup \Gamma_K$.
  \label{le:traceestimate}
\end{lemma}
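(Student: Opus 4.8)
The plan is to exploit the defining feature of the reconstruction space: although $K\in\MThG$ is cut by $\Gamma$ and its piece $K^i$ may be arbitrarily small, the restriction $v_h|_{K^i}$ is the trace of a genuine polynomial that also lives on the uncut neighbour $K_\circ^i$. Concretely, for $K\in\MThG$ the patch attached to $K$ is $S^i(K)=S^i(K_\circ^i)$, so the two cells $K$ and $K_\circ^i$ carry the \emph{same} least-squares polynomial; hence for every $v_h\in V_h$ there is a single $p\in\mathbb{P}_m$ with $v_h|_{K^i}=p|_{K^i}$ and $v_h|_{K_\circ^i}=p|_{K_\circ^i}$. Since $\partial K^i=(\partial K)^i\cup\Gamma_K\subset\overline K$, it suffices to prove the polynomial inequality
\[
  \|p\|_{L^2(\partial K^i)}\le C\,h_K^{-1/2}\,\|p\|_{L^2(K_\circ^i)},\qquad p\in\mathbb{P}_m,
\]
with $C$ independent of $K$ and of how $\Gamma$ cuts $K$.

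First I would fix the geometry. Applying M2 with $K_\circ^i$ as the central cell (note $K\in\Delta(K_\circ^i)$) gives $\rho_s h_K\le h_{K_\circ^i}\le \rho_s^{-1}h_K$, so $K$ and $K_\circ^i$ have comparable diameters. Choosing a ball $B_K$ of radius $R\simeq h_K$ containing $\overline K\cup K_\circ^i$ (possible since $K_\circ^i$ touches $K$), I bound the left-hand side by $\|p\|_{L^2(\partial K^i)}^2\le |\partial K^i|\,\|p\|_{L^\infty(B_K)}^2$, where $|\partial K^i|\le C h_K$: the polygonal part $(\partial K)^i$ has length $\le|\partial K|\lesssim h_K$ by M1 and the bounded face number $N$, while the arc $\Gamma_K$ has length $\lesssim h_K$ because $\Gamma$ is $C^2$ and, by Assumption 1, enters and leaves $K$ exactly once.

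Next I would pass to the uncut element. Let $D_0\subset K_\circ^i$ be the inscribed disk of radius $\tau h_{K_\circ^i}$ from M3, whose radius is $\gtrsim h_K$ by the size comparison above. On a single disk all norms on $\mathbb{P}_m$ are equivalent after rescaling to the unit disk, giving $\|p\|_{L^\infty(D_0)}\le C(\tau h_{K_\circ^i})^{-1}\|p\|_{L^2(D_0)}\le C h_K^{-1}\|p\|_{L^2(K_\circ^i)}$. It then remains to compare the sup norm on the large ball $B_K$ with that on the small disk $D_0$: rescaling $B_K$ to the unit ball turns $D_0$ into a disk whose radius lies in a fixed interval bounded away from $0$ and whose centre stays at bounded distance, so the finite-dimensional estimate $\|p\|_{L^\infty(B_K)}\le C\|p\|_{L^\infty(D_0)}$ holds with a constant depending only on $m,\tau,\rho_s,\sigma$. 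Chaining these bounds gives $\|p\|_{L^2(\partial K^i)}^2\le C h_K\cdot h_K^{-2}\|p\|_{L^2(K_\circ^i)}^2$, and taking square roots yields the claim.

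The main obstacle is the \emph{uniformity} of $C$, i.e. ruling out a sequence of cut cells on which the constant blows up. Two mechanisms could in principle cause this and must be excluded: a vanishing size ratio $h_{K_\circ^i}/h_K\to 0$, which M2 forbids, and a degenerate interface piece whose arclength or curvature is not commensurate with $h_K$, which is controlled by the $C^2$-regularity of $\Gamma$ together with Assumption 1. Once these are secured, every step is merely a scaling of a fixed estimate on the finite-dimensional space $\mathbb{P}_m$, so no small-cut instability appears; this is precisely the advantage of reading the polynomial off the full-size cell $K_\circ^i$ rather than working on the possibly tiny $K^i$ directly.
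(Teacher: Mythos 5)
Your proposal is correct and follows essentially the same route as the paper: the key observation that $v_h|_{K^i}$ and $v_h|_{K_\circ^i}$ are restrictions of one polynomial (since $S^i(K)=S^i(K_\circ^i)$), the bound $|\partial K^i|\leq C h_K$ combining M1/$N$ for the polygonal part and the $C^2$-regularity of $\Gamma$ with Assumption 1 for $\Gamma_K$, and a scaled finite-dimensional inverse estimate between a ball of radius $\simeq h_K$ containing $\partial K^i$ and the inscribed disk in $K_\circ^i$ guaranteed by M2--M3. The only cosmetic difference is that you factor through $\|p\|_{L^\infty(B_K)}\leq C\|p\|_{L^\infty(D_0)}$ with a possibly non-concentric pair (requiring the brief compactness remark you give), whereas the paper centres both balls at $\bm z_{K_\circ^i}$ and applies the inverse inequality $\|\hat p\|_{L^\infty(B(0,1))}\leq C\|\hat p\|_{L^2(B(0,\hat\tau/\tilde\tau))}$ on concentric balls in a single step.
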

\begin{proof}
  Since $K \in \MThG$, we have that the patch $S^i(K)$ is the same as
  the patch $S^i(K_\circ^i)$. From the definition of the least squares
  problem \eqref{eq:leastsquare}, it is clear that the solution to
  \eqref{eq:leastsquare} on $S^i(K)$ is the same as the solution to
  \eqref{eq:leastsquare} on $S^i(K_\circ^i)$.  Particularly,
  $\nabla^\alpha v_h|_{K^i}$ and $\nabla^\alpha v_h|_{K_\circ^i}$ are
  exactly the same polynomial which is denoted as $\tilde{p}$.
  \substitute{Based on M3, there exists a constant $\hat{\tau}$ such
  that $B(\bm z_{K_\circ^i}, \hat{\tau} h_{K_\circ^i}) \subset
  K_\circ^i$, where $B(\bm z, r)$ is a ball with center at $\bm z$ and
  radius $r$.  Owing to M2 and Assumption 2, there exists a constant
  $\tilde{\tau}$ such that $\partial K^i \subset B(\bm z_{K_\circ^i},
  \tilde{\tau} h_K)$, and we observe that}{\revise{Based on M3, there
  exists a constant $\hat{\tau}$ such that $B(\bm z_{K_\circ^i},
  \hat{\tau} h_{K_\circ^i}) \subset K_\circ^i$, where $B(\bm z, r)$ is
  a ball with center at $\bm z$ and radius $r$. From Assumption 2, we
  have that $K \in \Delta(K_\circ^i)$. By the mesh regularity M2,
  there exists a constant $\tilde{\tau}$ such that $\partial K^i
  \subset B(\bm z_{K_\circ^i}, \tilde{\tau} h_{K_\circ^i})$ and there
  exists a constant $C$ such that $h_{K} \leq C h_{K_\circ^i}$. We
  note that here the constants $\hat{\tau}$, $\tilde{\tau}$ and $C$
  only depend on $N$ and $\sigma$. We further deduce that}}
  \begin{displaymath}
    \begin{aligned}
      \|\tilde{p}\|_{L^2(\partial K^i)} &\leq |\partial K^i|^{\frac12}
      \|\tilde{p}\|_{L^\infty(\partial K^i)}  \leq |\partial
      K^i|^{\frac12} \|\tilde{p}\|_{L^\infty(B(\bm z_{K_\circ^i},
      \tilde{ \tau} h_{K_\circ^i}))} \\
      & \leq C |\partial K^i|^{\frac12} |B(\bm z_{K_\circ^i},
      \tilde{\tau} h_{K_\circ^i} )|^{-\frac12}
      \|\tilde{p}\|_{L^2(B(\bm z_{K_\circ^i}, \hat{\tau} h_{K_\circ^i}
      ))} \\ & \leq C |\partial K^i|^{\frac12} |B(\bm z_{K_\circ^i},
      \tilde{\tau} h_{K_\circ^i} )|^{-\frac12}
      \|\tilde{p}\|_{L^2(K_\circ^i)} \\
      & \leq Ch_K^{\frac{d-1}{2}} h_{K_\circ^i}^{-\frac{d}{2}}
      \|\tilde{p}\|_{L^2(K_\circ^i)} \leq C h_K^{-\frac12}
      \|\tilde{p}\|_{L^2(K_\circ^i)}.\\
    \end{aligned}
  \end{displaymath}
  The third inequality follows from the inverse inequality $\|\hat{p}
  \|_{L^\infty(B(0, 1))} \leq C \|\hat{p}\|_{L^2(B(0, \hat{\tau} /
  \tilde{\tau}))}$ for any $\hat{p} \in \mathbb P_m(B(0, 1))$ and the
  pullback using the bijective affine map from $B(\bm z_{K_\circ^i},
  \tilde{\tau} h_{K_\circ^i} )$ to $B(0, 1)$. As $\Gamma$ is of class
  $C^2$, it is easy to show (cf. \cite{Chen1998interface,
  Wu2012unfitted}) $|\Gamma_K| \leq C h_K^{d - 1}$. We complete the
  proof by observing $|\partial K^i| \leq h_K^{d - 1}$ and
  $|B(\bm{z}_{K_\circ^i}, \tilde{\tau} h_{K_\circ^i} )| \leq C
  h_{K_\circ^i}^d$.
\end{proof}

\begin{lemma}
  There exists a positive constant $h_0$ independent of $h$ and the
  location of the interface such that for all $h \leq h_0$ and any
  element $K \in \MThG$, the following trace inequality holds true:
  \begin{equation}
    \|w\|_{L^2(\Gamma_K)}^2 \leq C \left( h_K^{-1} \|w\|_{L^2(K)}^2 +
    h_K \|\nabla w\|_{L^2(K)}^2 \right), \quad \forall w \in H^1(K).
    \label{eq:h1traceestimate}
  \end{equation}
  \label{le:h1traceestimate}
\end{lemma}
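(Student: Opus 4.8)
The plan is to reduce this interface trace inequality to the standard boundary trace inequality M4 by an integration-by-parts argument, using the $C^2$-smoothness of $\Gamma$ to manufacture a smooth extension of the interface normal with uniformly bounded divergence. Since $\Gamma$ is $C^2$ and compact, there is a tubular neighborhood $U_\delta = \{\bm x : \mathrm{dist}(\bm x, \Gamma) < \delta\}$ of some fixed width $\delta > 0$ on which the signed distance function $d$ (taken negative in $\Omega_0$, positive in $\Omega_1$) is of class $C^2$. I would set $\bm F = \nabla d$, so that $|\bm F| \equiv 1$, $\bm F|_\Gamma = \bm{\mr n}_\Gamma$, and $\mathrm{div}\,\bm F = \Delta d$ is continuous, hence uniformly bounded, $\|\mathrm{div}\,\bm F\|_{L^\infty(U_{\delta/2})} \le C_\Gamma$. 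Crucially $\delta$ and $C_\Gamma$ depend only on $\Gamma$ and not on the mesh or on where $K$ lies, which is exactly what yields the claimed independence of $h_0$ from the interface location. Because every $K \in \MThG$ meets $\Gamma$ and has diameter $h_K$, taking $h_0 = \delta/2$ forces $K \subset U_{\delta/2}$ whenever $h_K \le h_0$, so $\bm F$ is defined and $C^1$ on all of $K$.

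Next I would apply the divergence theorem to $w^2 \bm F$ on the Lipschitz domain $K^0 = K \cap \Omega_0$, whose boundary is $(\partial K)^0 \cup \Gamma_K$ by Assumption 1. Along $\Gamma_K$ the outward normal of $K^0$ is $\bm{\mr n}_\Gamma = \bm F$, so that $\bm F \cdot \bm{\mr n} = 1$ there, and one obtains
\begin{displaymath}
  \int_{\Gamma_K} w^2 \,\d{s} = - \int_{(\partial K)^0} w^2\, (\bm F \cdot \bm{\mr n})\,\d{s} + \int_{K^0} \left( 2 w\, \nabla w \cdot \bm F + w^2\, \mathrm{div}\,\bm F \right) \d{x}.
\end{displaymath}
(Had $\Gamma_K$ been oriented so as to bound $K^1$, the same argument on $K^1$ applies.) It then remains to estimate the three terms on the right.

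The boundary term is controlled using $|\bm F| = 1$ and M4, namely $\int_{(\partial K)^0} w^2\,|\bm F \cdot \bm{\mr n}|\,\d{s} \le \|w\|_{L^2(\partial K)}^2 \le C(h_K^{-1}\|w\|_{L^2(K)}^2 + h_K\|\nabla w\|_{L^2(K)}^2)$. For the first volume term, Young's inequality gives $2|w||\nabla w| \le h_K^{-1} w^2 + h_K |\nabla w|^2$. For the second, the bound $|\mathrm{div}\,\bm F| \le C_\Gamma$ together with $h_K \le h_0$ (so that $1 \le h_0 h_K^{-1}$) lets me write $\int_{K^0} w^2\,|\mathrm{div}\,\bm F|\,\d{x} \le C_\Gamma \|w\|_{L^2(K)}^2 \le C_\Gamma h_0\, h_K^{-1}\|w\|_{L^2(K)}^2$, which is absorbed into the leading term. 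Collecting these estimates gives \eqref{eq:h1traceestimate}.

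The main obstacle is the construction in the first step: one must produce $\bm F$ near $\Gamma$ with a divergence bound uniform in the position of $K$, and verify that $h_0$ may be chosen independent of the interface location. This is precisely where the $C^2$-regularity and compactness of $\Gamma$ enter, through the standard fact that the signed distance to a $C^2$ surface is $C^2$ in a fixed tubular neighborhood with uniformly bounded second derivatives (equivalently, uniformly bounded curvature). The only remaining analytic point, justifying the divergence theorem for $w \in H^1(K)$ rather than $w \in C^\infty(\overline K)$, is routine: establish the identity for smooth $w$ and extend by density, since $C^\infty(\overline K)$ is dense in $H^1(K)$ and every term is continuous in the $H^1(K)$ norm.
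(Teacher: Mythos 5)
Your proposal is correct, and it proves the lemma by a genuinely different route than the paper, which in fact offers no proof of its own and simply defers to the cited references \cite{Wu2012unfitted, hansboa2002unfittedFEM, Wang2016extended}. The proofs in those references (e.g., Lemma 3 of Hansbo--Hansbo) proceed by representing $\Gamma_K$ locally as a $C^2$ graph over a chord direction, integrating along a transversal direction, and concluding by a scaling argument with the curvature bound controlling the deviation of $\Gamma_K$ from a straight cut; this requires some bookkeeping over how the interface may cut an element. Your argument instead lifts the problem to a single global object: the signed distance field, whose gradient $\bm F = \nabla d$ is a unit $C^1$ extension of $\bm{\mr n}_\Gamma$ on a tubular neighborhood of fixed width $\delta$ (this is the standard regularity of $d$ for compact $C^2$ hypersurfaces, e.g.\ Gilbarg--Trudinger Lemma 14.16), followed by the divergence theorem on $K^0$ and the boundary trace inequality M4. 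This buys you several things: the proof is self-contained and free of case analysis on the cut configuration, since only $K \subset U_{\delta/2}$ is used and all norms are taken over the full element $K$; the independence of $h_0$ and $C$ from the interface location is transparent, as both depend on $\Gamma$ only through the reach $\delta$ and the curvature bound $\|\Delta d\|_{L^\infty(U_{\delta/2})}$; and the cheap absorption $\|w\|_{L^2(K)}^2 \leq h_0\, h_K^{-1} \|w\|_{L^2(K)}^2$ disposes of the zeroth-order term. Two small points you implicitly rely on are both fine here: the divergence theorem on the curved piece $K^0$ is justified because Assumption 1 makes $K^0$ a Lipschitz domain (and $w^2 \bm F \in W^{1,1}(K^0)$ for $w \in H^1(K)$, so even the density step can be bypassed), and the orientation issue is harmless since replacing $K^0$ by $K^1$ only flips the sign of $\bm F \cdot \bm{\mr n}$ on $\Gamma_K$. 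This normal-extension technique is standard in the unfitted/cutFEM literature, so your proof is both valid and arguably a cleaner justification than the paper's citation.
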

See the proof of this lemma in \cite{Wu2012unfitted,
  Hansbo2002unfittedFEM, Xiao2020high}. 
  
Now we are ready to claim the continuity and coercivity of the
bilinear form $b_h(\cdot, \cdot)$.
\begin{theorem}
  Let $b_h(\cdot, \cdot)$ be the bilinear form defined in
  \eqref{eq:bilinear} with sufficiently large $\eta$. Then there
  exist positive constants $C$ such that 
  \begin{align}
    |b_h(u, v) | &\leq C\enorm{u} \enorm{v}, \quad \forall u, v \in
    W_h, \label{eq:continuity}  \\
    b_h(v_h, v_h) &\geq C \enorm{v_h}^2, \quad \forall v_h \in V_h.
    \label{eq:coercivity}
  \end{align}
  \label{th:coercivity}
\end{theorem}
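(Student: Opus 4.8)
The plan is to decompose $b_h(\cdot,\cdot)$ into three pieces—the elementwise diffusion term, the consistency (jump–average) terms summed over $\MEh$ and over the $\Gamma_K$, and the penalty terms—and to treat the two estimates separately.

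For the continuity \eqref{eq:continuity} I would simply apply Cauchy–Schwarz term by term. The diffusion term is bounded by $\beta_{\max}\|\nabla u\|_{L^2(\Omega_0\cup\Omega_1)}\|\nabla v\|_{L^2(\Omega_0\cup\Omega_1)}$. After inserting the scaling factors $h_e^{\pm 1/2}$ (resp. $h_K^{\pm 1/2}$), each consistency integral factors as $\|h_e^{-1/2}\jump{u}\|\,\|h_e^{1/2}\aver{\beta\nabla v}\|$ and the same with $u,v$ interchanged, while the penalty terms are controlled directly by the jump components of the norm. Since $\enorm{\cdot}$ is defined to already contain the average-gradient pieces $\|h_e^{1/2}\aver{\nabla v}\|_{L^2(\MEh)}$ and $\|h_K^{1/2}\aver{\nabla v}\|_{L^2(\Gamma)}$, no trace inequality is needed here and \eqref{eq:continuity} follows for all $u,v\in W_h$; this is the easy direction.

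For the coercivity \eqref{eq:coercivity} I set $u_h=v_h$, so the two consistency contributions coincide and the only obstacle is the cross term
\[
  T(v_h):=\Big[\sum_{e\in\MEh}\int_{e^0\cup e^1}+\sum_{K\in\MThG}\int_{\Gamma_K}\Big]\jump{v_h}\cdot\aver{\beta\nabla v_h}\,\d{s}.
\]
By Cauchy–Schwarz, $|T(v_h)|$ is bounded by the product of $\big(\sum h^{-1}\|\jump{v_h}\|^2\big)^{1/2}$ and $\big(\sum h\|\aver{\beta\nabla v_h}\|^2\big)^{1/2}$, so everything reduces to the discrete trace inequality
\[
  \sum_{e\in\MEh}\int_{e^0\cup e^1} h_e\|\aver{\nabla v_h}\|^2\,\d{s}+\sum_{K\in\MThG}\int_{\Gamma_K} h_K\|\aver{\nabla v_h}\|^2\,\d{s}\le C\|\nabla v_h\|_{L^2(\Omega_0\cup\Omega_1)}^2,
\]
which holds for $v_h\in V_h$ but fails on general $W_h$. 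For uncut faces $e\in\MEhB$ this is the standard combination of the trace inequality M4 and the inverse inequality M5 on the two adjacent elements. For the cut faces and the interface pieces I would instead apply Lemma \ref{le:traceestimate} to the gradient polynomial $\nabla v_h|_{K^i}=\nabla v_h|_{K_\circ^i}$, which bounds $\|\nabla v_h\|_{L^2(\partial K^i)}$ (hence both $\|\nabla v_h\|_{L^2((\partial K)^i)}$ and $\|\nabla v_h\|_{L^2(\Gamma_K)}$) by $C h_K^{-1/2}\|\nabla v_h\|_{L^2(K_\circ^i)}$; multiplying by $h_K\sim h_e$ cancels the negative power. Summing and using that each clean element $K_\circ^i$ is the neighbour of only a uniformly bounded number of cut cells (by M2 and the bounded cardinality of $\Delta(K)$) gives the finite-overlap bound $C\|\nabla v_h\|^2$.

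With this inequality in hand, Young's inequality yields $2|T(v_h)|\le\delta\|\nabla v_h\|^2+C_\delta\sum h^{-1}\|\jump{v_h}\|^2$, whence
\[
  b_h(v_h,v_h)\ge(\beta_{\min}-\delta)\|\nabla v_h\|^2+\big(\eta-C_\delta\big)\Big(\|h_e^{-1/2}\jump{v_h}\|_{L^2(\MEh)}^2+\|h_K^{-1/2}\jump{v_h}\|_{L^2(\Gamma)}^2\Big).
\]
Choosing $\delta<\beta_{\min}$ and then $\eta$ sufficiently large makes both coefficients positive, and the same discrete trace inequality shows $\enorm{v_h}^2\le C\big(\|\nabla v_h\|^2+\|h_e^{-1/2}\jump{v_h}\|_{L^2(\MEh)}^2+\|h_K^{-1/2}\jump{v_h}\|_{L^2(\Gamma)}^2\big)$, so the lower bound transfers to $\enorm{v_h}^2$ and \eqref{eq:coercivity} follows. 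The main difficulty is exactly this cut-face and interface discrete trace inequality: it is the step that exploits the reconstructed structure of $V_h$—each cut polynomial coincides with its extension on the full-measure clean neighbour $K_\circ^i$—and it is what makes the constant uniform no matter how $\Gamma$ intersects the mesh, thereby removing the usual geometric constraints.
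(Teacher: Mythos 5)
Your proposal is correct and follows essentially the same route as the paper: Cauchy--Schwarz for the boundedness, and for coercivity the same discrete trace inequality (M4 plus the inverse inequality on uncut elements, Lemma~\ref{le:traceestimate} applied to $\nabla v_h|_{K^i}=\nabla v_h|_{K_\circ^i}$ on cut elements and the interface), followed by Young's inequality and a sufficiently large $\eta$; your ``transfer back to $\enorm{\cdot}$'' step is exactly the paper's equivalence of $\enorm{\cdot}$ with the weaker norm $\wnorm{\cdot}$ on $V_h$. The only cosmetic difference is that you make the finite-overlap counting of the clean neighbours $K_\circ^i$ explicit, which the paper leaves implicit.
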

\begin{proof}
  By Cauchy-Schwartz inequality, for $\forall u, v \in W_h$ we
  directly obtain that 
  \begin{displaymath}
    \begin{aligned}
      b_h(u, v) &\leq C \Big( \|\beta \nabla u \|_{L^2(\MTh^0 \cup
      \MTh^1)}^2 + \| h_e^{-1/2} \jump{u} \|_{L^2(\MEh^0 \cup
      \MEh^1)}^2 + \| h_e^{1/2} \aver{\beta \nabla u} \|_{L^2(\MEh^0
      \cup \MEh^1)}^2 + \|h_K^{-1/2} \jump{u} \|_{L^2(\Gamma)}^2 \\ 
      +  \| & h_K^{1/2} \aver{\beta \nabla u} \|_{L^2(\Gamma)}^2
      \Big)^{1/2} \Big( \|\beta \nabla v \|_{L^2(\MTh^0 \cup
      \MTh^1)}^2  + \| h_e^{-1/2} \jump{v} \|_{L^2(\MEh^0 \cup
      \MEh^1)}^2 + \| h_e^{1/2} \aver{\beta \nabla v} \|_{L^2(\MEh^0
      \cup \MEh^1)}^2 \\
      &\hspace{180pt} + \|h_K^{-1/2} \jump{v} \|_{L^2(\Gamma)}^2 +
      \|h_K^{1/2} \aver{\beta \nabla v} \|_{L^2(\Gamma)}^2 \Big)^{1/2}
      \\
      & \leq C \enorm{u} \enorm{v}, \\
    \end{aligned}
  \end{displaymath}
  which directly gives us the continuity result \eqref{eq:continuity}.

  To obtain \eqref{eq:coercivity}, we first define a weaker norm
  $\wnorm{\cdot}$ which is a more natural one for analyzing
  coercivity. For any $w_h \in V_h$, $\wnorm{\cdot}$ is given by
  \begin{displaymath}
    \begin{aligned}
    \wnorm{w_h}^2 = \|\nabla w_h\|_{L^2(\MTh^0 \cup \MTh^1)}^2 +&
    \|h_e^{-1/2} \jump{w_h} \|_{L^2(\MEh^0 \cup \MEh^1)}^2   + \|
    h_K^{-1/2} \jump{w_h}\|_{L^2(\Gamma)}^2. \\
  \end{aligned}
  \end{displaymath}
  From the trace estimate \eqref{eq:traceinequality} and the inverse
  inequality \eqref{eq:inverseinequality}, we immediately obtain that
  \begin{displaymath}
    \begin{aligned}
      \sum_{e \in \partial K} \|h_e^{1/2} \nabla w_h \|_{L^2(e)}^2 &
      \leq C \sum_{e \in \partial K} \left( h_K^{-1} \| h_e^{1/2}
      \nabla w_h \|_{L^2(K)}^2 + h_K \|h_e^{1/2} \nabla^2
      w_h\|_{L^2(K)}^2  \right)   \\
      &\leq C \|\nabla w_h\|_{L^2(K)}^2, \quad \forall K \in \MThB. \\
    \end{aligned}
  \end{displaymath}
  \revise{
  By the trace estimate \eqref{eq:traceestimate} and the mesh
  regularity M1, we have that
  \begin{displaymath}
    \begin{aligned}
      \sum_{e \in \partial K} \|h_e^{1/2} \nabla w_h \|_{L^2(e^0 \cup
      e^1)}^2 \leq C \left( \|h_K^{1/2} \nabla w_h \|_{L^2(\partial
      K^0)}^2 +  \|h_K^{1/2} \nabla w_h \|_{L^2(\partial K^1)}^2
      \right), \quad \forall K \in \MThG, 
    \end{aligned}
  \end{displaymath}
  and
  \begin{displaymath}
    \begin{aligned}
      \| h_K^{1/2} \nabla w_h\|_{L^2(\partial K^i)} &\leq C h_K^{-1}
      \|h_K^{1/2} \nabla w_h\|_{L^2(K_\circ^i)}^2 \leq C \| \nabla w_h
      \|_{L^2(K_\circ^i)}^2, \quad \forall K \in \MThG, \quad i= 0, 1.
      \\
    \end{aligned}
  \end{displaymath}
  The above inequalities give us 
  \begin{displaymath}
    \begin{aligned}
    \|h_e^{1/2} \aver{ \nabla & w_h} \|_{L^2(\MEh^0 \cup \MEh^1)}^2  +
    \|h_K^{1/2} \aver{ \nabla w_h}\|_{L^2(\Gamma)}^2 \leq \\ C &
    \left( \sum_{K \in \MThB} \sum_{e \in \partial K} \|h_e^{1/2}
    \nabla w_h\|_{L^2(e)}^2 + \sum_{K \in \MThG} \|h_K^{1/2} \nabla
    w_h \|_{L^2(\partial K^0)}^2 + \sum_{K \in \MThG} \|h_K^{1/2}
    \nabla w_h \|_{L^2(\partial K^1)}^2  \right) \\
    \leq C & \left( \sum_{K \in \MThB} \| \nabla w_h \|_{L^2(K)}^2 +
    \sum_{K \in \MThG} \|\nabla w_h \|_{L^2(K_\circ^0)}^2 +  \sum_{K
    \in \MThG} \|\nabla w_h \|_{L^2(K_\circ^1)}^2  \right) \\ 
    \leq C & \|\nabla w_h\|_{L^2(\MTh^0 \cup \MTh^1)}^2, \\
  \end{aligned}
\end{displaymath}}
  which actually indicates $\enorm{w_h} \leq C \wnorm{w_h}$ and the
  equivalence of $\enorm{\cdot}$ and $\wnorm{\cdot}$ restricted on
  $V_h$.

  Then we consider to bound the trace terms in the bilinear form with
  respect to the norm $\wnorm{\cdot}$. For the face $e \in
  \MEh^\circ$, we let $e$ be shared by two neighbouring elements $K^-$
  and $K^+$. For any $e \in \MEh^\circ \cap \MEhB$, we apply the
  Cauchy-Schwartz inequality to get that 
  \begin{equation}
    \begin{aligned}
      -\int_{e} 2 \jump{v_h} \cdot & \aver{\beta \nabla v_h} \d{s}
      \geq - \int_{e} \frac{1}{h_e \varepsilon } \| \jump{v_h} \|^2
      \d{s} - \int_{e} h_e \varepsilon \| \aver{\beta \nabla v_h}\|^2
      \d{s}, \\
      & \geq -\frac{1}{\varepsilon} \|h_e^{-1/2} \jump{v_h}
      \|_{L^2(e)}^2 - \varepsilon \|h_e^{1/2}  \beta \nabla v_h
      \|_{L^2(e \cap \partial K^-)}^2 -  \varepsilon \|h_e^{1/2}
       \beta \nabla v_h \|_{L^2(e \cap \partial K^+)}^2 ,\\
    \end{aligned}
    \label{eq:etrace} 
  \end{equation}
  for any $\varepsilon > 0$. For any $e \in \MEh^\circ \cap \MEhG$ and
  $i = 0, 1$, we deduce that 
  \begin{equation}
    \begin{aligned}
      -\int_{e^i} 2 \jump{v_h} \cdot & \aver{\beta \nabla v_h} \d{s}
      \geq - \int_{e^i} \frac{1}{h_e \varepsilon } \| \jump{v_h} \|^2
      \d{s} - \int_{e^i} h_e \varepsilon \| \aver{\beta \nabla
      v_h}\|^2 \d{s}, \\
      & \geq -\frac{1}{\varepsilon} \|h_e^{-1/2} \jump{v_h}
      \|_{L^2(e^i)}^2 - \varepsilon \|h_e^{1/2} \beta \nabla v_h
      \|_{L^2(e^i \cap \partial K^-)}^2 -  \varepsilon \|h_e^{1/2}
      \beta \nabla v_h \|_{L^2(e^i \cap \partial K^+)}^2 ,\\
    \end{aligned}
    \label{eq:eitrace}
  \end{equation}
  By trace inequality \eqref{eq:traceestimate} and
  \eqref{eq:traceinequality}, for any $e \in \MEh^\circ$ we have 
  \begin{equation}
    \|h_e^{1/2}\beta \nabla v_h \|_{L^2(e^i \cap \partial K^{\pm})}
    \leq 
    \begin{cases}
      C\| \nabla v_h \|_{L^2(K^{\pm})}, \quad K^{\pm} \in \MThB, \\
      C\| \nabla v_h \|_{L^2( (K^{\pm})_\circ^i)}, \quad K^{\pm} \in
      \MThG,\\
    \end{cases} i = 0, 1.
    \label{eq:Kpmtrace}
  \end{equation}
  \revise{
  Together with \eqref{eq:etrace} - \eqref{eq:Kpmtrace}, we obtain
  that 
  \begin{displaymath}
    \begin{aligned}
      \sum_{e \in \MEh^\circ \cap \MEhB} - \int_{e} 2 \jump{v_h} \cdot
      \aver{\beta \nabla v_h} \d{s} \geq -\sum_{e \in \MEh^\circ \cap
      \MEhB} \frac{1}{\varepsilon} \|h_e^{-1/2} \jump{v_h}
      \|_{L^2(e)}^2 - C \varepsilon \sum_{K \in \MTh} \| \nabla v_h
      \|_{L^2(K^0 \cup K^1)}^2, 
    \end{aligned}
  \end{displaymath}
  and
  \begin{displaymath}
    \begin{aligned}
      - \sum_{e \in \MEh^\circ \cap \MEhG} \left( \int_{e^0} 2
      \jump{v_h} \cdot \aver{\beta \nabla v_h} \d{s} +  \int_{e^1} 2
      \jump{v_h} \cdot \aver{\beta \nabla v_h} \d{s} \right) &\geq -
      \sum_{e \in \MEh^\circ \cap \MEhG} \frac{1}{\varepsilon}
      \|h_e^{-1/2} \jump{v_h} \|_{L^2(e^0)}^2 \\ 
      - \sum_{e \in \MEh^\circ \cap \MEhG} \frac{1}{\varepsilon}
      \|h_e^{-1/2} & \jump{v_h} \|_{L^2(e^1)}^2 - C \varepsilon
      \sum_{K \in \MTh} \| \nabla v_h \|_{L^2(K^0 \cup K^1)}^2. \\
    \end{aligned}
  \end{displaymath}
  For any $e \in \MEh^b$, it is similar to derive that
  \begin{displaymath}
    \sum_{e \in \MEh^b} - \int_{e} 2 \jump{v_h} \cdot \aver{\beta
    \nabla v_h} \d{s} \geq -\sum_{e \in \MEh^b} \frac{1}{\varepsilon}
    \|h_e^{-1/2} \jump{v_h} \|_{L^2(e)}^2 - C \varepsilon \sum_{K \in
    \MTh} \| \nabla v_h \|_{L^2(K^0 \cup K^1)}^2. 
  \end{displaymath}
  } \revise{
  Further for any $K \in \MThG$, we again apply the trace estimate
  \eqref{eq:traceestimate} to obtain that 
  \begin{equation}
    \begin{aligned}
      -\int_{\Gamma_K}& 2 \jump{v_h} \cdot  \aver{\beta \nabla v_h}
      \d{s} \geq \int_{\Gamma_K} -\frac{1}{\varepsilon} \| h_K^{-1/2}
      \jump{v_h} \|^2 \d{s} - \int_{\Gamma_K} \varepsilon \|
      h_K^{-1/2} \aver{\beta \nabla v_h}\|^2 \d{s}, \\
      &  \geq -\frac{1}{\varepsilon} \|h_K^{-1/2} \jump{v_h}
      \|_{L^2(\Gamma_K)}^2 - \varepsilon \|h_K^{1/2} \beta \nabla v_h
      \|_{L^2(\Gamma_K \cap \partial K^0)}^2 -  \varepsilon
      \|h_K^{1/2}  \beta \nabla v_h \|_{L^2(\Gamma_K \cap \partial
      K^1)}^2 ,\\
      & \geq -\frac{1}{\varepsilon} \|h_K^{-1/2} \jump{v_h}
      \|_{L^2(\Gamma_K)}^2 - C \varepsilon \left( \|\nabla v_h
      \|_{L^2(K_\circ^0)}^2 + \| \nabla v_h \|_{L^2(K_\circ^1)}^2
      \right). \\
    \end{aligned}
    \label{eq:GammaKtrace}
  \end{equation}}
  The inequality \eqref{eq:GammaKtrace} yields that 
  \begin{displaymath}
    \begin{aligned}
      - \sum_{K \in \MThG} \int_{\Gamma_K} 2 \jump{v_h} \cdot
      \aver{\beta \nabla v_h} \d{s} \geq - \sum_{K \in \MThG}
      \frac{1}{\varepsilon} \|h_K^{-1/2} \jump{v_h}
      \|_{L^2(\Gamma_K)}^2 - C \varepsilon \sum_{K
      \in \MTh} \| \nabla v_h \|_{L^2(K^0 \cup K^1)}^2.
    \end{aligned}
  \end{displaymath}
  Combining all above inequalities, we conclude that there exists
  a constant $C$ such that 
  \begin{displaymath}
    \begin{aligned}
    b_h(v_h, v_h) \geq (\beta - C \varepsilon) \|\nabla
    v_h&\|_{L^2(\MTh^0 \cup \MTh^1)}^2 + \|(\eta -
    \frac{1}{\varepsilon}) h_e^{-1/2} \jump{v_h}\|_{L^2(\MEh^0 \cup
    \MEh^1)}^2 \\ 
    & + \|(\eta - \frac{1}{\varepsilon}) h_K^{-1/2}
    \jump{v_h}\|_{L^2(\Gamma)}^2, \\
  \end{aligned}
  \end{displaymath}
  for any $\varepsilon > 0$. We can directly let $\varepsilon = \beta
  /(2C)$ and select a sufficiently large $\eta$ to ensure $b_h(v_h,
  v_h) \geq C \wnorm{v_h}^2$, which completes the proof.
\end{proof}


\begin{remark}
  To ensure the stability near the interface, some unfitted methods
  \cite{Xiao2020high, Massjung2012unfitted,
  Hansbo2002unfittedFEM} may require a weighted average $\aver{v}
  =\kappa_0 v|_{\Omega_0} + \kappa_1 v|_{\Omega_1}$ where $\kappa_0$
  and $\kappa_1$ are the cut-dependent parameters like $\kappa_i =
  |K_i|/|K|(i = 0, 1)$ for elements in $\MThG$. In our method, another
  advantage is just taking the arithmetic one could also guarantee the
  stability and we note that this advantage is brought by the patch
  reconstruction. In addition, the analysis can be adapted to their
  choices without any difficulty. 
\end{remark}

Now let us give the approximation error in the DG energy norm
$\enorm{\cdot}$.
\begin{lemma}
  Let $u \in H^{t}(\Omega_0 \cup \Omega_1)$ with $t \geq 2$, there
  exists a constant $C$ such that
  \begin{equation}
    \enorm{u - \mc R u} \leq C\Lambda_m h^{s - 1} \|u\|_{H^t(\Omega_0
    \cup \Omega_1)},
    \label{eq:appDGnorm}
  \end{equation}
  where $s = \min(m + 1, t)$.
  \label{th:appDGnorm}
\end{lemma}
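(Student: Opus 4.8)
The plan is to split $\enorm{u - \mc Ru}^2$ into its five constituent pieces and estimate each against the local approximation bounds \eqref{eq:interpolation}, summing afterwards. Writing $\xi = u - \mc Ru$, on every element $\xi$ coincides with a one-sided reconstruction error: on $K \in \MTh^i \backslash \MThG$ one has $\xi|_K = E^i u - \mc R^i(E^i u)$, and on $K^i$ for $K \in \MThG$ one has $\xi|_{K^i} = (E^i u - \mc R^i(E^i u))|_{K^i}$, because $u = E^i u$ on $\Omega_i$. This identification is what lets \eqref{eq:interpolation} be applied verbatim, with the extension stability \eqref{eq:extension} held in reserve for the final summation.

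First I would dispatch the three bulk-and-skeleton terms. The volume term $\|\nabla\xi\|_{L^2(\Omega_0\cup\Omega_1)}^2$ is bounded element by element by the $q=1$ case of \eqref{eq:interpolation}. For the edge jump and edge average-gradient terms I would use that across any edge in $\MEh^\circ \backslash \MEh^\Gamma$, or across $e^i$ for an edge cut by $\Gamma$, the exact solution is continuous within a single subdomain, so $\jump{u} = 0$ there and $\jump{\xi}$ reduces to the difference of one-sided reconstruction errors of the two neighbours; the $L^2(\partial K)$ trace bounds in \eqref{eq:interpolation} (case $q = 0$ for the jump, $q = 1$ for the average gradient) then apply after restricting to $e$ or $e^i \subset \partial K$. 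Throughout, M1 gives $\rho_v h_K \le h_e \le h_K$, so the weights $h_e^{\mp 1/2}$ combine with the $h_K$-powers to leave exactly $h_K^{2(s-1)}$ in each term. Boundary edges in $\MEh^b$ are treated identically.

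The interface terms are the heart of the matter. On $\Gamma_K$ the exact solution does jump, $\jump{u} = a\,\un_\Gamma$, but since both $u$ and $\mc Ru$ are assembled from the same one-sided extensions one finds the clean cancellation $\jump{\xi}|_{\Gamma_K} = \big[(E^1 u - \mc R^1 E^1 u) - (E^0 u - \mc R^0 E^0 u)\big]\,\un_\Gamma$, so the interface data $a$ contributes nothing extra and $\jump{\xi}$ is again a sum of one-sided errors; likewise $\aver{\nabla\xi}$ on $\Gamma_K$ is the average of the two one-sided gradient errors. To convert these $L^2(\Gamma_K)$ quantities into bulk norms on $K$ I would invoke the interface trace inequality, Lemma \ref{le:h1traceestimate}, applied to $w = E^i u - \mc R^i E^i u$ for the jump and to each component $w = \partial_k(E^i u - \mc R^i E^i u)$ for the gradient average; feeding in \eqref{eq:interpolation} then produces the factor $h_K^{2s-1}$, and the $h_K^{\mp 1/2}$ weights again leave $h_K^{2(s-1)}$.

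The main obstacle is precisely the gradient-average term on $\Gamma$: Lemma \ref{le:h1traceestimate} forces the second-order quantity $\|\nabla^2(E^i u - \mc R^i E^i u)\|_{L^2(K)}$ upon us, which is not among the $q = 0, 1$ estimates recorded in \eqref{eq:interpolation}. One must resist absorbing it through an inverse inequality on the polynomial part, since $\|\nabla \mc R^i E^i u\|_{L^2(K)}$ is only $O(1)$ and would wreck the order; the correct route is the $q = 2$ analogue of \eqref{eq:interpolation}, furnished by the same patch-reconstruction lemma \cite{li2016discontinuous,li2017discontinuous} underlying the $q = 0, 1$ cases, giving $h_K^2\|\nabla^2\xi\|_{L^2(K)}^2 \leq C h_K^{2(s-1)}$ in line with the rest. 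Finally I would sum over all elements: each patch $S^i(K)$ contains a uniformly bounded number of cells and each cell lies in boundedly many patches, so the finite-overlap property together with \eqref{eq:extension} collapses $\sum_K \|E^i u\|_{H^t(S^i(K))}^2$ into $C\|u\|_{H^t(\Omega_0\cup\Omega_1)}^2$, which yields \eqref{eq:appDGnorm}.
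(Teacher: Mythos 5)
Your proposal is correct and follows essentially the same route as the paper: identify $u-\mc Ru$ elementwise with the one-sided errors $E^iu-\mc R^i(E^iu)$, bound the volume and skeleton terms by \eqref{eq:interpolation}, treat both interface terms through the cut-element trace inequality of Lemma \ref{le:h1traceestimate}, and sum using \eqref{eq:extension} together with the finite overlap of the patches. You are in fact slightly more careful than the paper at one point: where the paper dispatches the term $\|h_K^{1/2}\aver{\nabla(u-\mc Ru)}\|_{L^2(\Gamma)}$ with ``Similarly,'' you correctly observe that applying Lemma \ref{le:h1traceestimate} to the components of $\nabla(E^iu-\mc R^i(E^iu))$ forces a second-derivative bound not listed among the $q=0,1$ cases of \eqref{eq:interpolation}, and you rightly supply the $q=2$ analogue from the underlying patch-reconstruction lemmas rather than a naive inverse inequality.
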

\begin{proof}
  From \eqref{eq:interpolation}, it is trivial to obtain
  \begin{displaymath}
  \| \nabla( u - \mc R u)\|_{L^2(\MTh^0 \cup \MTh^1)} \leq
  C\Lambda_m h^{s - 1} \|u\|_{H^t(\Omega_0 \cup \Omega_1)}.
  \end{displaymath}
  Then using trace inequality \eqref{eq:traceinequality} and
  \eqref{eq:interpolation}, for any $K \in
  \MTh^i(i=0,1)$ we have
  \begin{displaymath}
    \begin{aligned}
      \|u - \mc R u\|_{L^2\left( (\partial K)^i \right)} &\leq \|E^i u
      - \mc R(E^i u)\|_{L^2(\partial K)} \leq C\Lambda_m h_K^{s+1/2}
      \|E^i u\|_{H^t(S^i(K))} ,\\
      \|\nabla(u - \mc R u)\|_{L^2\left( (\partial K)^i \right)} &\leq
      \|\nabla(E^i u - \mc R(E^i u))\|_{L^2(\partial K)} \leq
      C\Lambda_m h_K^{s-1/2} \|E^i u\|_{H^t(S^i(K))}. \\
    \end{aligned}
  \end{displaymath}
  From the above two inequalities and \eqref{eq:extension}, we could
  conclude 
  \begin{displaymath}
    \begin{aligned}
      \|h_e^{-1/2} \jump{u - \mc R u}\|_{L^2(\MEhB)} +   \|h_e^{-1/2}
      \jump{u - \mc R u}\|_{L^2(\MEhG)}  & \leq C \Lambda_m h^{s - 1}
      \|u\|_{H^t(\Omega_0 \cup \Omega_1)}, \\
     \|h_e^{1/2} \aver{u - \mc Ru}\|_{L^2(\MEhB)} + \|h_e^{1/2}
     \aver{u - \mc Ru}\|_{L^2(\MEhG)}&\leq C \Lambda_m h^{s -
     1}\|u\|_{H^t(\Omega_0 \cup \Omega_1)}. \\
    \end{aligned}
  \end{displaymath}
  Finally we use \eqref{eq:h1traceestimate} to  bound the error on the
  interface. For any $K \in \MThG$, we obtain 
  \begin{displaymath}
    \begin{aligned}
      \|h_K^{-1/2} \jump{u - \mc R u} \|_{L^2(\Gamma_K)} &\leq C
      \sum_{i = 0, 1} \big ( h_K^{-1} \|E^iu - \mc R(E^i u)\|_{L^2(K)}
      \\
      & \hspace{100pt}+ h_K \|\nabla(E^i u - \mc R(E^i u)) \|_{L^2(K)}
      \big ) \\
      &\leq C\Lambda_m h_K^{s-1} \left( \|E^0 u\|_{H^t(S^0(K))} +
      \|E^1u\|_{H^t(S^1(K))} \right). \\
    \end{aligned}
  \end{displaymath}
  A summation over all $K \in \MThG$ gives us 
  \begin{displaymath}
    \|h_K^{-1/2}\jump{u - \mc Ru}\|_{L^2(\Gamma)} \leq C\Lambda_m h^{s -
    1} \|u\|_{H^t(\Omega_0 \cup \Omega_1)}.
  \end{displaymath}
  Similarly, we could yield
  \begin{displaymath}
    \|h_K^{1/2}\aver{u - \mc R u}\|_{L^2(\Gamma)} \leq C \Lambda_m
    h^{s - 1} \|u\|_{H^t(\Omega_0 \cup \Omega_1)}.
  \end{displaymath}
  Combining all the inequalities above gives the error estimate
  \eqref{eq:appDGnorm}, which completes the proof.
\end{proof}

We are now ready to prove a priori error estimates.
\begin{theorem}
  Let $u \in H^t(\Omega_0 \cup \Omega_1)$ with $t \geq 2$ be the exact
  solution to \eqref{eq:elliptic} and let $u_h \in V_h$ be the
  solution to \eqref{eq:discrete}, then there exist constants $C$ such
  that the following error estimates hold true:
  \begin{equation}
    \enorm{u - u_h} \leq Ch^{s - 1} \|u\|_{H^t(\Omega_0 \cup
    \Omega_1)},
    \label{eq:estimateDGnorm}
  \end{equation}
  and
  \begin{equation}
    \|u - u_h\|_{L^2(\Omega)} \leq Ch^{s} \|u\|_{H^t(\Omega_0 \cup
    \Omega_1)},
    \label{eq:estimateL2norm}
  \end{equation}
  where $s = \min(m+1, t)$.
  \label{th:estimate}
\end{theorem}
\begin{proof}
 Together with the Galerkin orthogonality \eqref{eq:orthogonality},
 boundedness \eqref{eq:continuity} and coercivity
 \eqref{eq:coercivity} of the bilinear form $b_h(\cdot, \cdot)$ we
 could have a bound of $\enorm{u - u_h}$. For any $v_h \in V_h$, we
 obtain that 
 \begin{displaymath}
   \begin{aligned}
     C_0 \enorm{u_h - v_h}^2 &\leq b_h(u_h - v_h, u_h - v_h) = b_h(u -
     v_h, u_h - v_h) \\
     & \leq C_1 \enorm{u - v_h} \enorm{u_h - v_h}. \\
    \end{aligned}
 \end{displaymath}
 Hence, 
 \begin{displaymath}
   \begin{aligned}
     \enorm{u - u_h} &\leq \enorm{u - v_h} + \enorm{u_h - v_h} \leq C
     \enorm{u - v_h}  \\
     & \leq C \inf_{v_h \in V_h} \enorm{u - v_h} \leq C \enorm{u -
     \mc{R} u}. \\
   \end{aligned}
 \end{displaymath}
 Combining \eqref{eq:appDGnorm} immediately gives us the estimate
 \eqref{eq:estimateDGnorm}.
 
 Finally we obtain the \revise{optimal convergence order} in $L^2$
 norm with the standard duality argument. Let $\phi \in H^2(\Omega_0
 \cup \Omega_1)$ be the solution of
 \begin{displaymath}
   \begin{aligned}
     -\nabla \cdot \beta \nabla \phi &= u - u_h ,\quad \bm x \in
    \Omega_0 \cup \Omega_1, \\
    \phi  &= 0, \quad \bm x \in \partial \Omega, \\
    \jump{\phi} &= 0, \quad \bm x \in \Gamma, \\
    \jump{\beta \nabla \phi \cdot \bm{\mr n}_\Gamma} &= 0, \quad
    \bm x \in \Gamma, \\
  \end{aligned}
\end{displaymath}
and satisfy \cite{Babuska1970discontinuous}
\begin{displaymath}
  \|\phi\|_{H^2(\Omega_0 \cup \Omega_1)} \leq C \|u -
  u_h\|_{L^2(\Omega)}.
\end{displaymath}
We denote by $\phi_I = \mc R \phi$ the interpolant of $\phi$. Then
together with the Galerkin orthogonality \eqref{eq:orthogonality} we
deduce that
\begin{displaymath}
  \begin{aligned}
    \|u - u_h\|_{L^2(\Omega)}^2 &= b_h(\phi, u - u_h) = b_h(\phi -
    \phi_I, u - u_h) \\
    &\leq C \enorm{\phi - \phi_I} \enorm{u - u_h} \leq Ch
    \|\phi\|_{H^2(\Omega_0 \cup \Omega_1)} \|u\|_{H^t(\Omega_0 \cup
    \Omega_1)} \\
    & \leq C h^s \|u - u_h\|_{L^2(\Omega)} \|u\|_{H^t(\Omega_0 \cup
    \Omega_1)}.\\
  \end{aligned}
\end{displaymath}
The estimate \eqref{eq:estimateL2norm} is obtained by elminating $\| u
- u_h\|_{L^2(\Omega)}$, which completes the proof.
\end{proof}



\section{Numerical Experiments}
\label{sec:numericalresults}
In this section, we present some numerical results by solving some
benchmark elliptic interface problems. For each case, the source term
$f$, the Dirichlet boundary data $g$ and the jump term $a$, $b$ are
given according to the solutions. We construct the spaces of order
$1 \leq m \leq 3$ to solve each problem. For simplicity, we take the
$\# S^i(K)$ uniformly for all elements and we list the values of $\#
S^i(K)$ for all $m$ that are used in all experiments in Tab
\ref{tab:numSK}. A direct sparse solver is used to solve the resulting
sparse linear system. The interface in all numerical experiments is
described by a given level set function $\phi(\bm{x})$.

\begin{table}
  \centering
  \renewcommand\arraystretch{1.3}
  \begin{tabular}{p{1.5cm}|p{1.5cm}|p{1cm} |p{1cm} |p{1cm} }
    \hline\hline
    & $m$ & 1 & 2 & 3 \\
    \hline
    \multirow{2}{1.5cm}{$\# S^i(K)$} & $d=2$ & 5 & 9 & 15 \\
    \cline{2-5}
    & $d=3$ & 9 & 18 & 38 \\
    \hline\hline
  \end{tabular}
  \caption{ The uniform $\# S^i (K)$ for $ 1 \leq m \leq 3$.}
  \label{tab:numSK}
\end{table}

\subsection{2D Example}
\paragraph{\textbf{Example 1.}} We first consider the classical
interface problem on the square domain $(-1, 1) \times (-1, 1)$ with a
circular interface $\phi(x, y) = x^2 + y^2 - r^2$ with radius $r =
0.5$ (see Fig \ref{fig:mesh_circle}). The exact solution and
coefficient are chosen to be 
\begin{displaymath}
  \begin{aligned}
  u(x, y) & = \begin{cases}
    \frac14\left( 1 - \frac1{8b} - \frac1b \right) + \frac1b \left(
    \frac{r^4}{2} + r^2 \right), \quad & \text{outside $\Gamma$},\\
    x^2 + y^2, \quad & \text{inside $\Gamma$}, \\
  \end{cases} \\
  \beta &= \begin{cases}
    b, \quad & \text{outside $\Gamma$}, \\
    2, \quad & \text{inside $\Gamma$}. \\
  \end{cases} \\
\end{aligned}
\end{displaymath}
With $b = 10$, $u$ is continuous over $\Omega$.
\begin{figure}[htp]
  \centering
  \includegraphics[width=2.2in]{./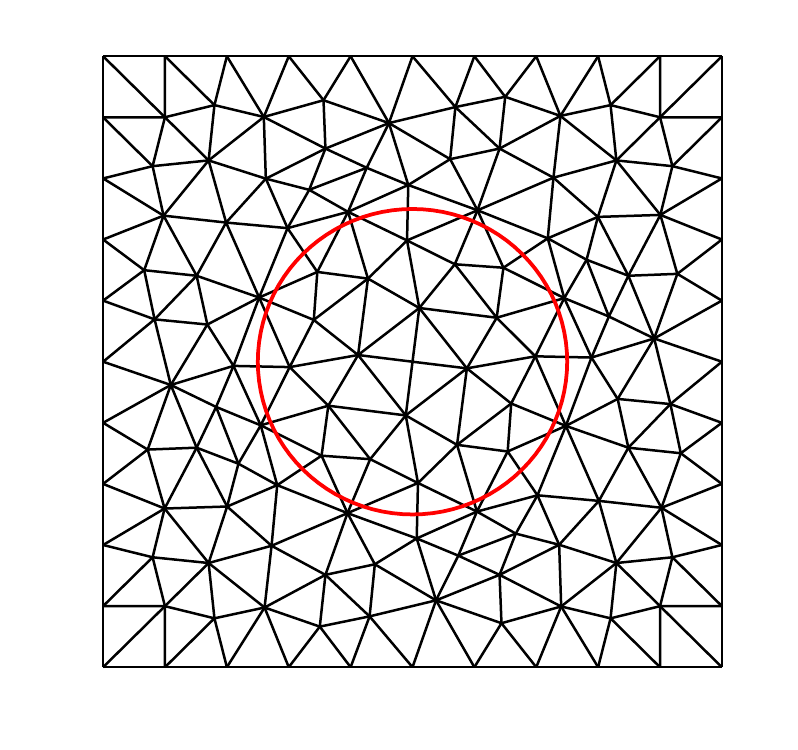}
  \includegraphics[width=2.2in]{./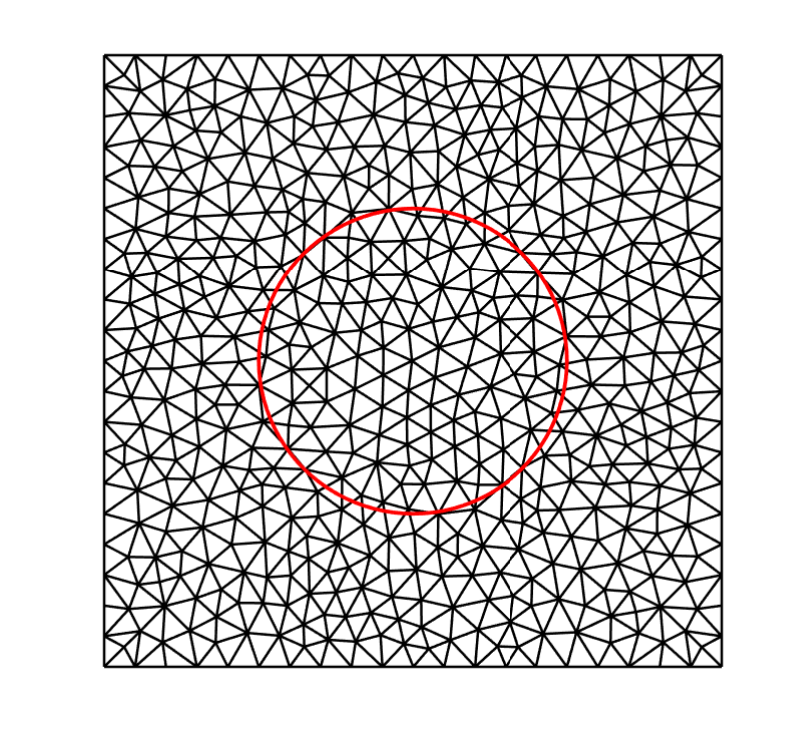}
  \caption{Triangulation for example 1 with mesh size $h = 1/5$ (left)
    / $h = 1/10$ (right). }
  \label{fig:mesh_circle}
\end{figure}
By using a series of quasi-uniform triangular meshes, the $L^2$ norm
and DG energy norm of the error in the approximation to the exact
solution with mesh size $h = 1/5, 1/10, \ldots , 1/80$ are reported in
Fig \ref{fig:ex1}. For each fixed $m$, we observe that the errors $\|u
- u_h\|_{L^2(\Omega)}$ and $\enorm{u - u_h}$ converge to zero at the
rate $O(h^{m+1})$ and $O(h^m)$ as the mesh is refined, respectively.
Such convergence rates are consistent with the theoretical results.

\begin{figure}[htb]
  \centering
  \includegraphics[width=2.2in]{./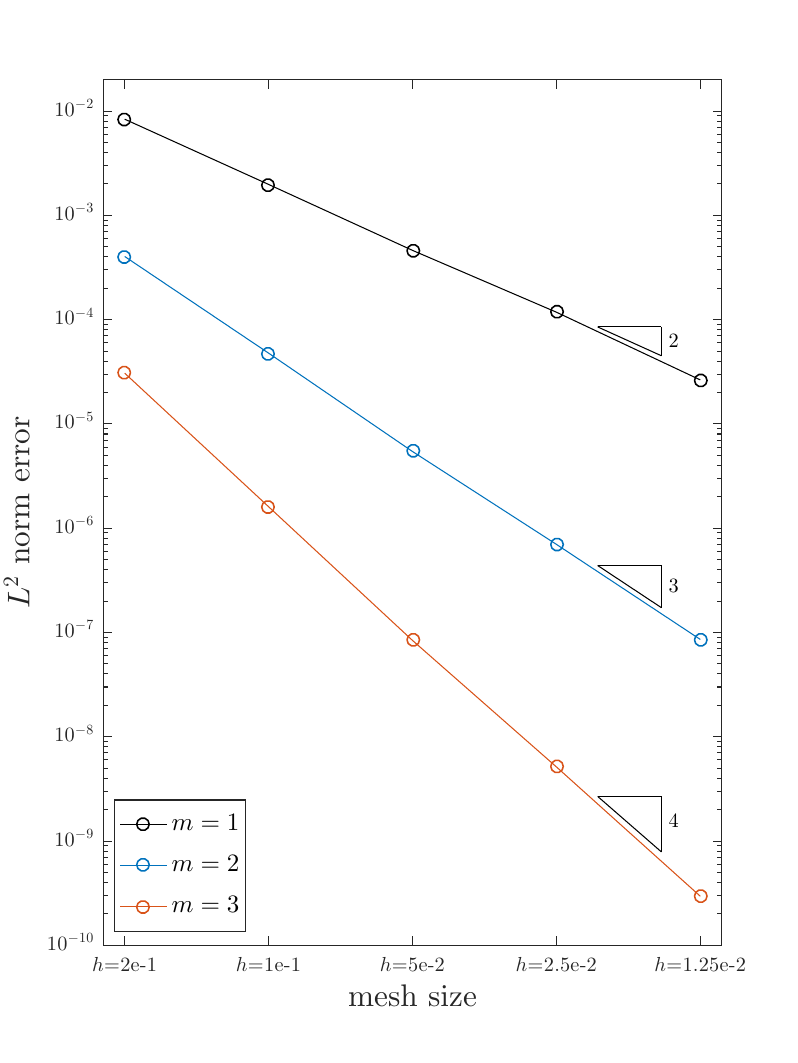}
  \includegraphics[width=2.2in]{./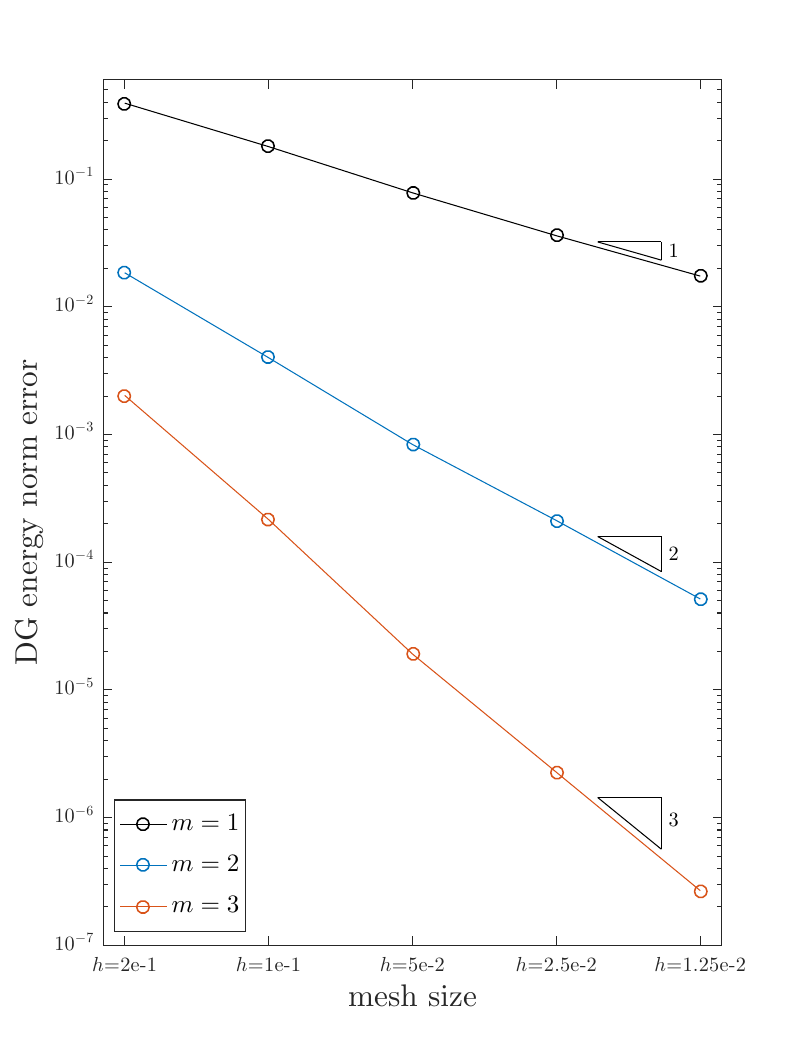}
  \caption{The convergence orders under $L^2$ norm (left) / DG
    energy norm (right) for Example 1.}
  \label{fig:ex1}
\end{figure}

\paragraph{\textbf{Example 2.}} In this example, we consider the same
interface and the same domain as in Example 1. The analytical solution
$u(x, y)$ and the coefficient are defined in the same way as in
Example 1. But we solve the elliptic interface problem based on a
sequence of polygonal meshes as shown in Fig \ref{fig:voronoi}, which
are generated by {\tt PolyMesher} \cite{talischi2012polymesher}.
\begin{figure}
  \centering
  \includegraphics[width=2.2in]{./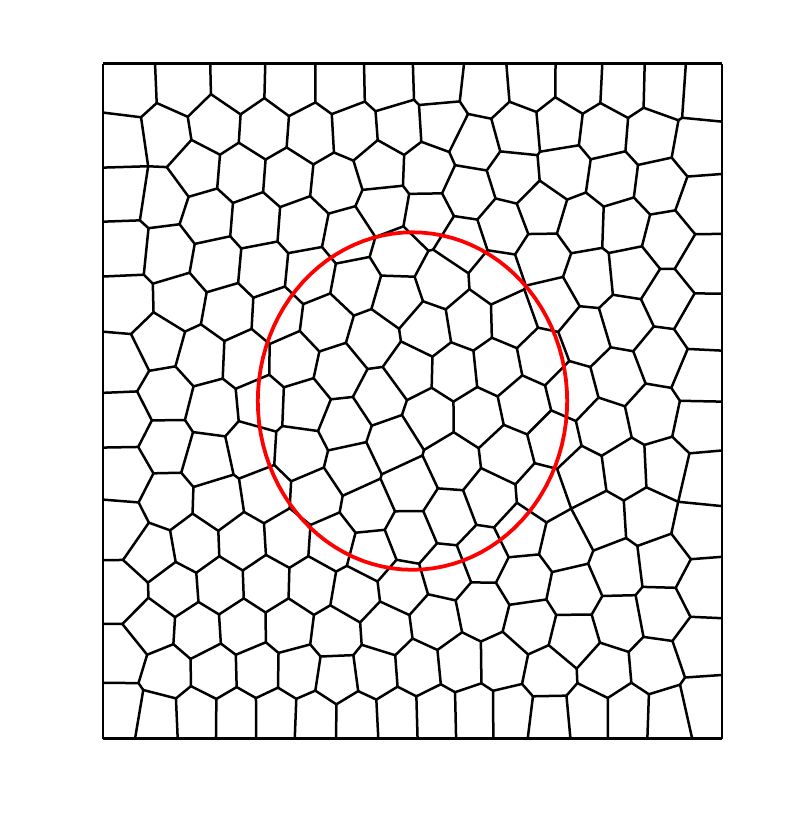}
  \includegraphics[width=2.2in]{./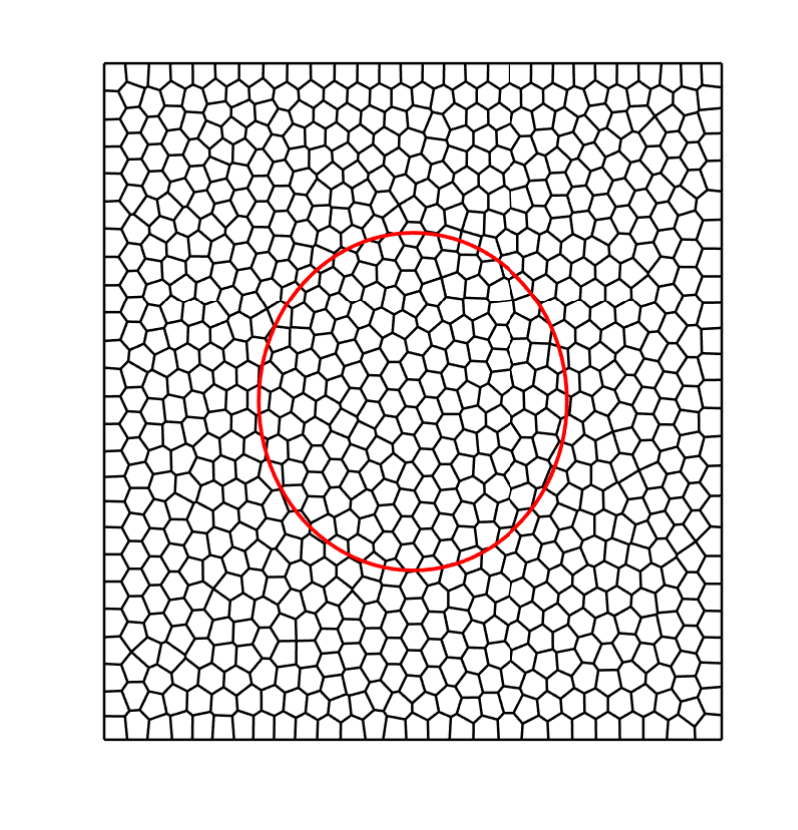}
  \caption{Voronoi mesh for example 2 with 200 elements (left) / 800
    elements (right). }
  \label{fig:voronoi}
\end{figure}
The numerically detected convergence orders are displayed in Fig
\ref{fig:ex2} for both error measurements. It is clear that the orders
of convergence in $L^2$ norm and DG energy norm are $O(h^{m+1})$ and
$O(h^m)$, respectively, which again are in agreement with the
theoretical predicts.

\begin{figure}[htb]
  \centering
  \includegraphics[width=2.2in]{./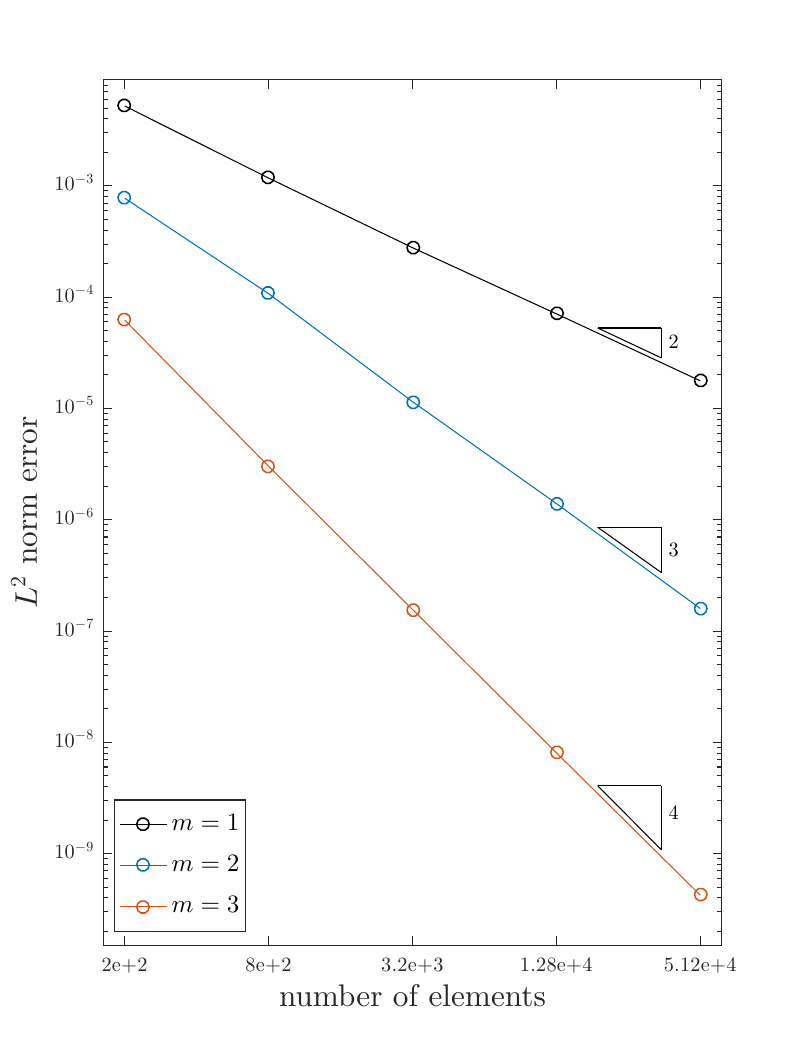}
  \includegraphics[width=2.2in]{./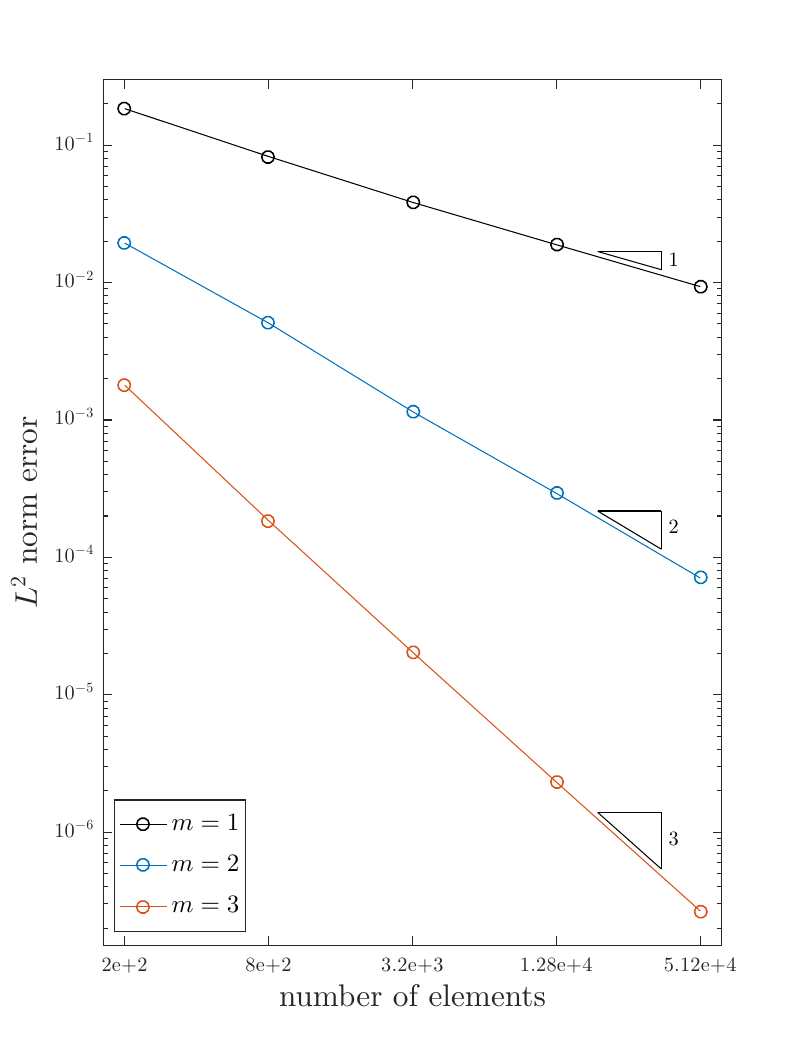}
  \caption{The convergence orders under $L^2$ norm (left) / DG
    energy norm (right) for Example 2.}
  \label{fig:ex2}
\end{figure}

For the Example 3 - 6, the computational domain is $(-1 , 1) \times
(-1, 1)$ and we solve the test problems on a sequence of triangular
meshes with mesh size $h = 1/5, 1/10, \cdots, 1/80$.

\paragraph{\textbf{Example 3.}} In this case, we consider the problem
in \cite{Zhou2006fictitious} which contains the strongly discontinuous
coefficient $\beta$ to test the robustness of the proposed method. We
consider the elliptic problem with an ellipse interface (see Fig
\ref{fig:mesh_ex3}), 
\begin{displaymath}
  \phi(x, y) = \left( \frac{x}{18/27} \right)^2 + \left(
  \frac{y}{10/27} \right)^2 - 1
\end{displaymath}
The exact solution and the coefficient are given as
\begin{displaymath}
  \begin{aligned}
    u(x, y) &= \begin{cases}
      5e^{-x^2 - y^2}, \quad &\text{outside } \Gamma, \\
      e^x \cos(y), \quad &\text{inside } \Gamma,\\
    \end{cases} \\
    \beta &= \begin{cases}
      1, \quad & \text{outside } \Gamma, \\
      1000, \quad & \text{inside } \Gamma.\\
    \end{cases}\\
  \end{aligned}
\end{displaymath}
\begin{figure}[htp]
  \centering
  \includegraphics[width=2.2in]{./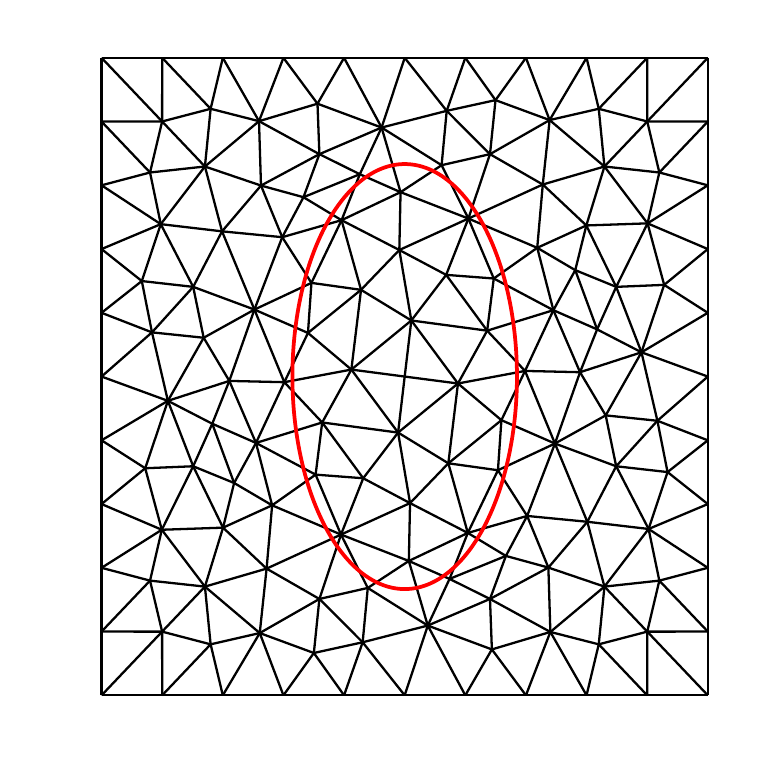}
  \includegraphics[width=2.2in]{./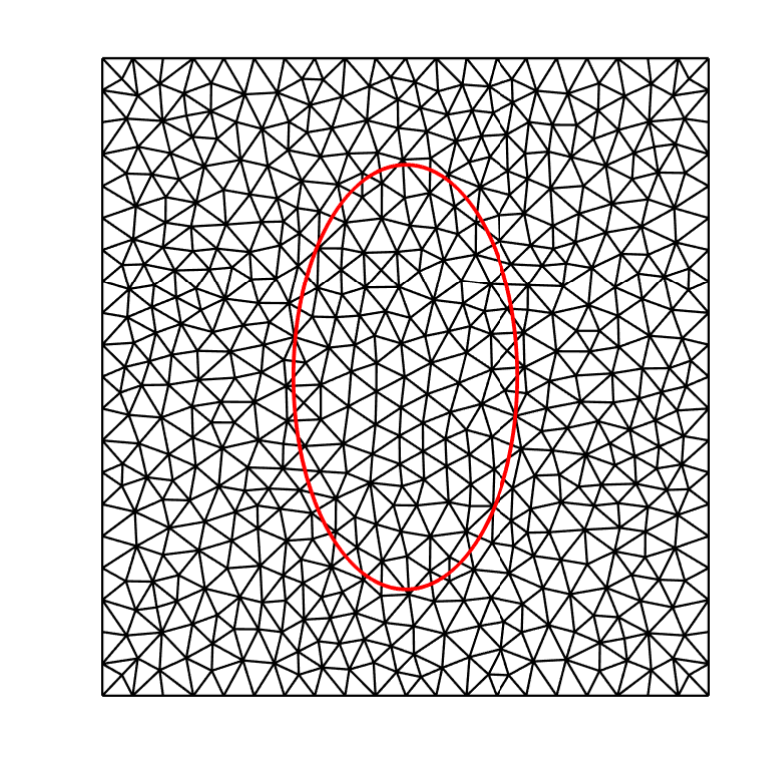}
  \caption{Triangulation for example 3 with mesh size $h = 1/5$ (left)
    / $h = 1/10$ (right).}
  \label{fig:mesh_ex3}
\end{figure}
There is a large jump in $\beta$ across the interface $\Gamma$, which
may lead to an ill-conditioned linear system. We still use the direct
sparse solver to solve the resulting sparse linear system and our
method shows the robustness for this case. As can be seen from
Fig \ref{fig:error_ex3}, the computed rates of convergence match with
the theoretical analysis.

\begin{figure}[htb]
  \centering
  \includegraphics[width=2.2in]{./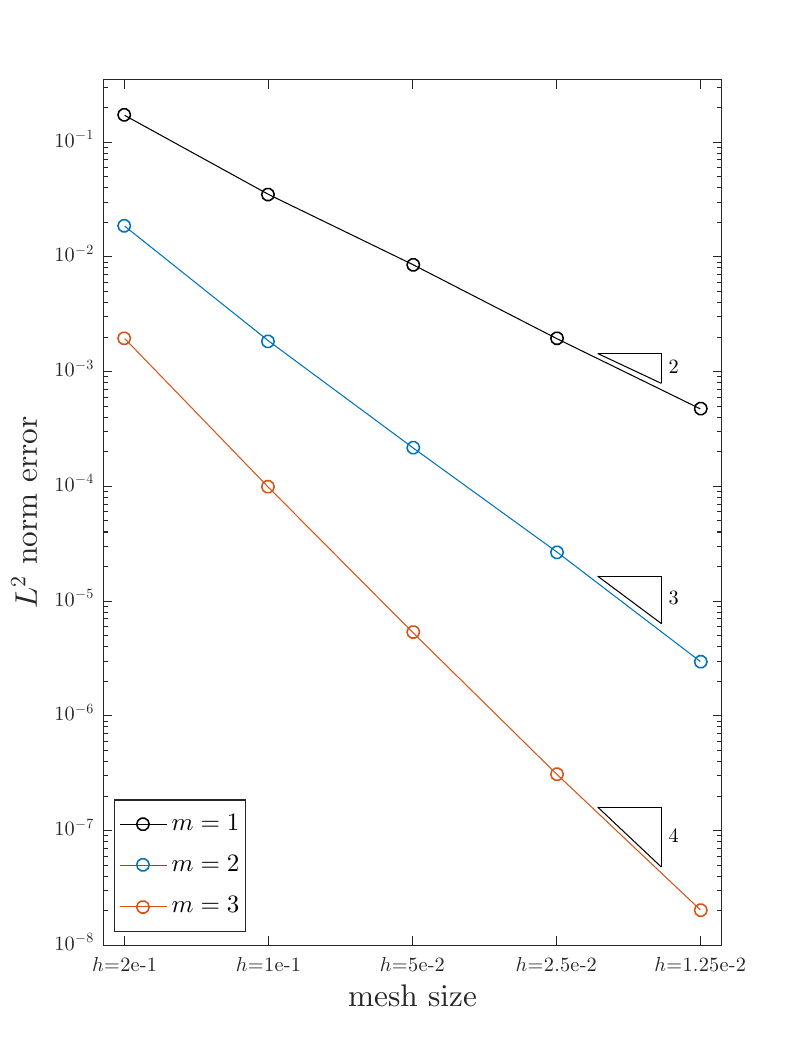}
  \includegraphics[width=2.2in]{./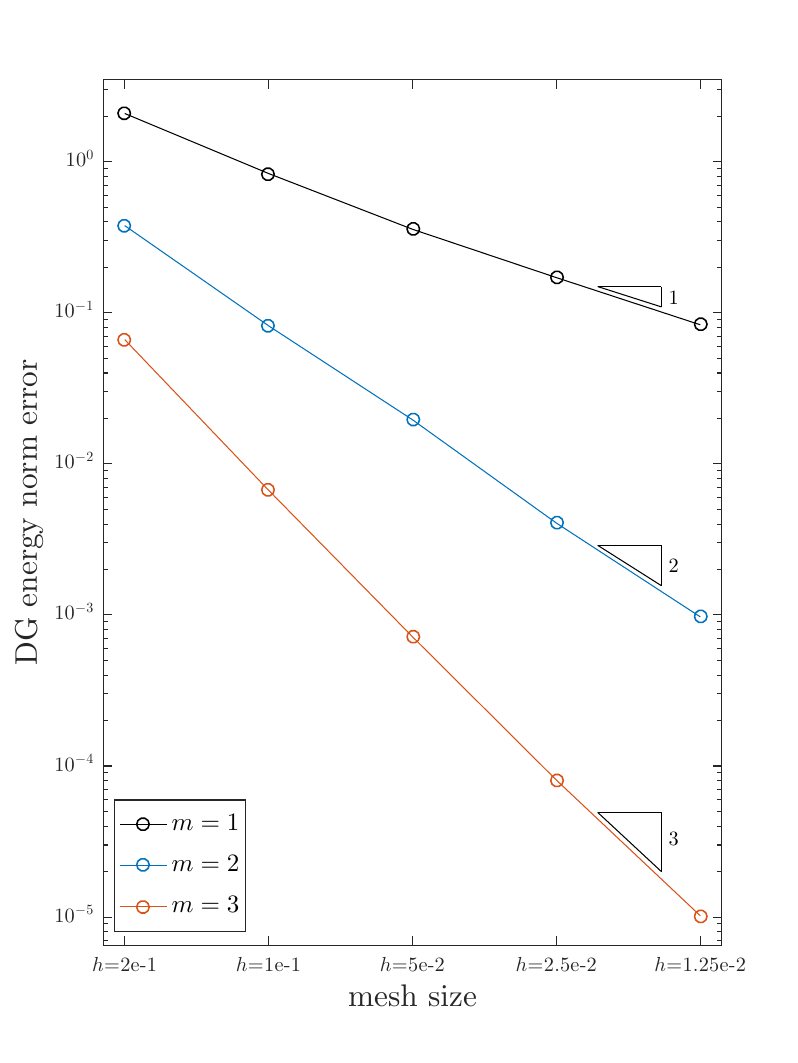}
  \caption{The convergence orders under $L^2$ norm (left) /  DG
    energy norm (right) for Example 3.}
  \label{fig:error_ex3}
\end{figure}

\paragraph{\textbf{Example 4.}} In this example, we consider solving
the elliptic problem with a kidney-shaped interface
\cite{Huynh2013hybrid}, which is governed by the following level set
function
\begin{displaymath}
  \phi(x, y) = \left( 2\left( \left( x + 0.5 \right)^2 + y \right) - x
  - 0.5 \right)^2 - \left( \left( x + 0.5 \right)^2 + y^2 \right) +
  0.1.
\end{displaymath}
\begin{figure}[htp]
  \centering
  \includegraphics[width=2.2in]{./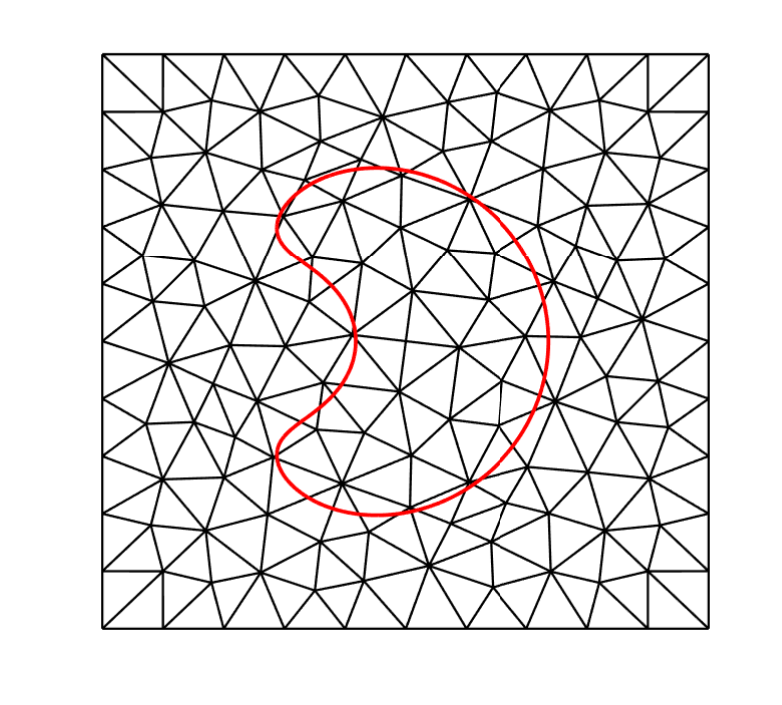}
  \includegraphics[width=2.2in]{./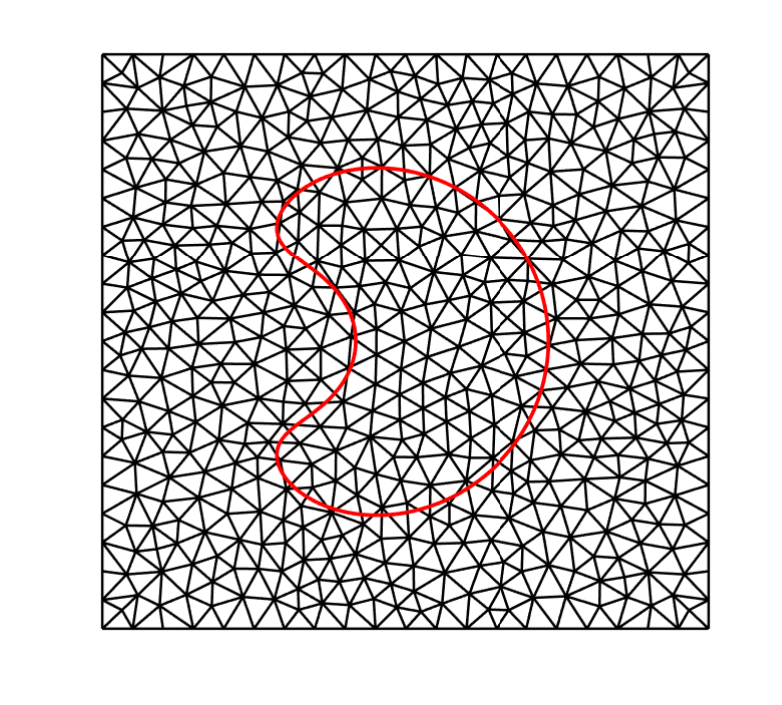}
  \caption{Triangulation for example 4 with mesh size $h = 1/5$ (left)
    / $h = 1/10$ (right).}
  \label{fig:mesh_ex4}
\end{figure}
The boundary data and source term are derived from the exact solution
and coefficient
\begin{displaymath}
  \begin{aligned}
    u(x, y) &= \begin{cases}
      0.1\cos(1 - x^2 - y^2), &\quad \text{outside } \Gamma, \\
      \sin(2x^2 + y^2 + 2) + x, &\quad \text{inside } \Gamma, \\
    \end{cases}\\
    \beta &= \begin{cases}
      10, &\quad \text{outside } \Gamma, \\
      1, &\quad \text{inside } \Gamma.\\
    \end{cases}\\
  \end{aligned}
\end{displaymath}
We present numerical results in Fig \ref{fig:error_ex4} and the
predicted convergence rates for both norms are verified.

\begin{figure}[htb]
  \centering
  \includegraphics[width=2.2in]{./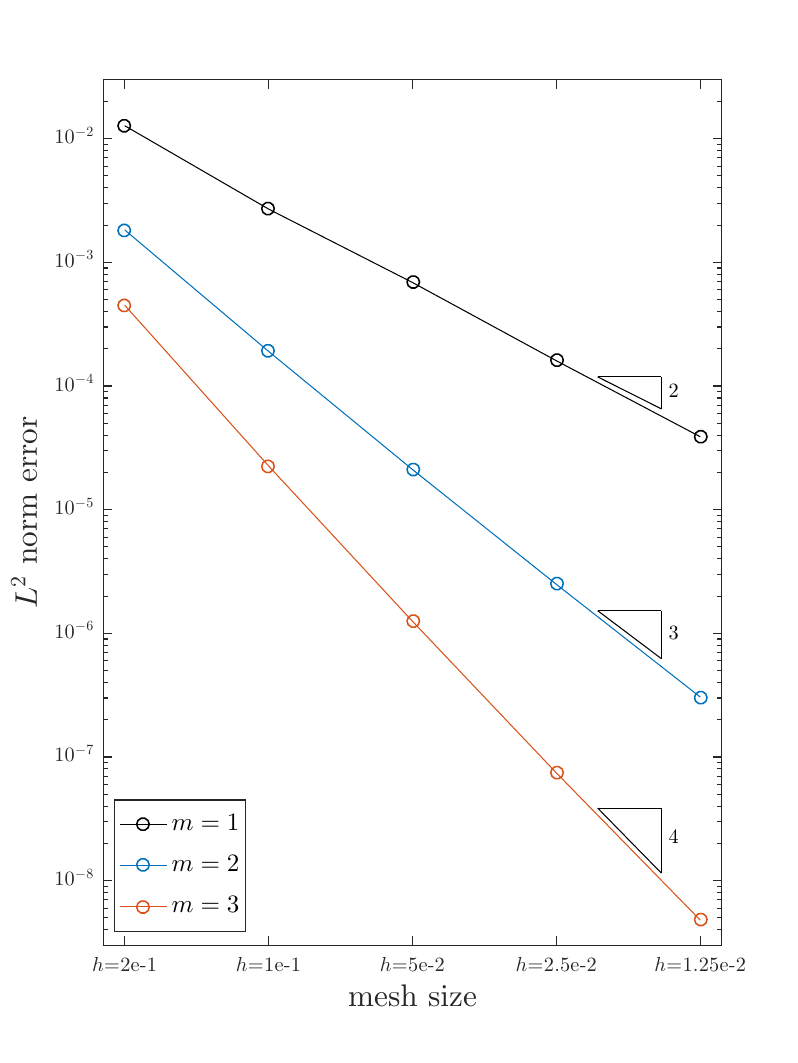}
  \includegraphics[width=2.2in]{./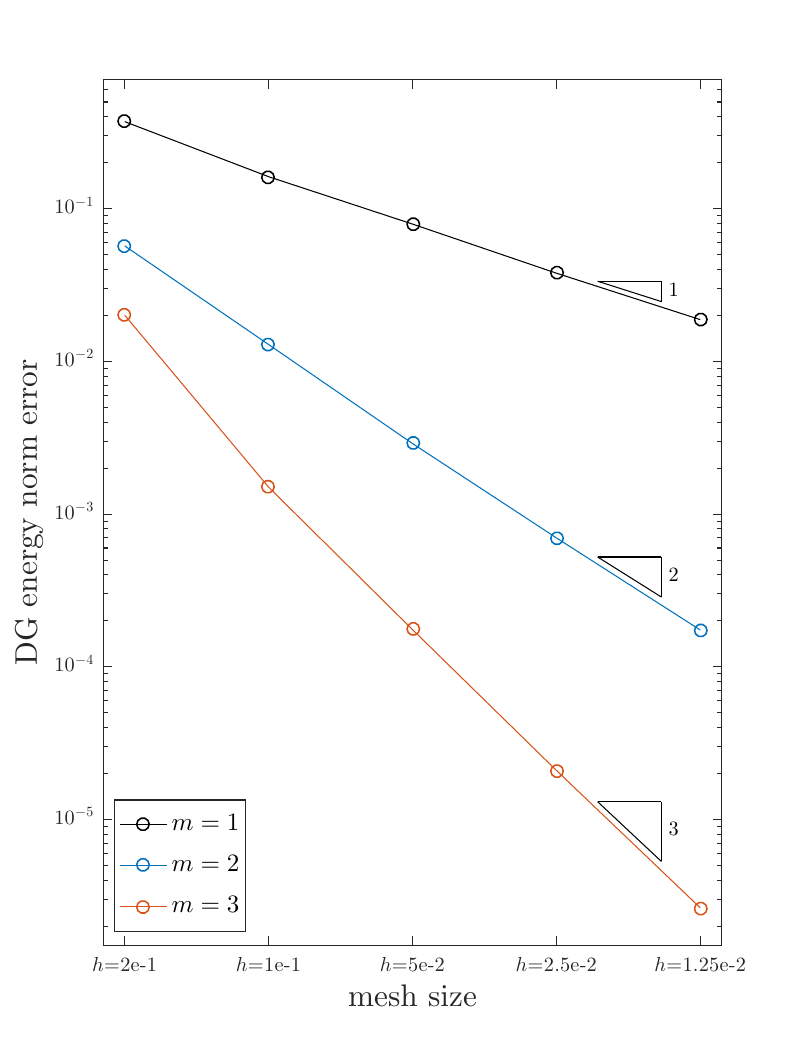}
  \caption{The convergence orders under $L^2$ norm (left) / DG
    energy norm (right) for Example 4.}
  \label{fig:error_ex4}
\end{figure}

\paragraph{\textbf{Example 5.}} Next, we consider a standard test case
with an interface consisting of both concave and convex curve segments
\cite{Zhou2006fictitious}. The interface is parametrized with the
polar angle $\theta$
\begin{displaymath}
  r = \frac{1}{2} + \frac{\sin 5\theta}{7}.
\end{displaymath}
The exact solution is selected to be 
\begin{displaymath}
  \begin{aligned}
    u(x, y) &= \begin{cases}
      0.1(x^2 + y^2)^2 - 0.01 \ln(2\sqrt{x^2 + y^2}), \quad
      &\text{outside } \Gamma,\\
      e^{x^2 + y^2}, \quad &\text{inside } \Gamma, \\
    \end{cases}\\
    \beta &= \begin{cases}
      10, \quad & \text{outside } \Gamma,\\
      1, \quad &\text{inside } \Gamma. \\
    \end{cases} \\
  \end{aligned}
\end{displaymath}
\begin{figure}[htb]
  \centering
  \includegraphics[width=2.2in]{./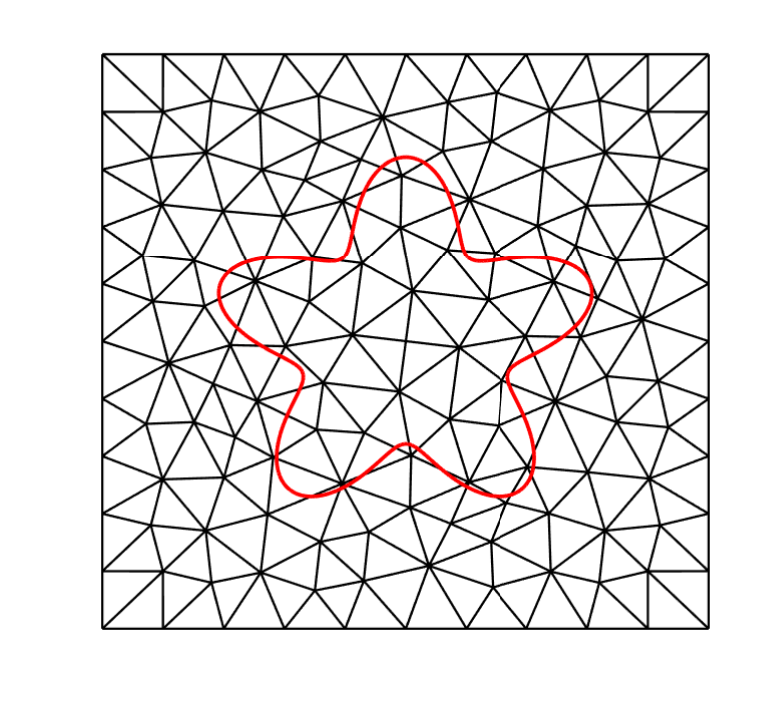}
  \includegraphics[width=2.2in]{./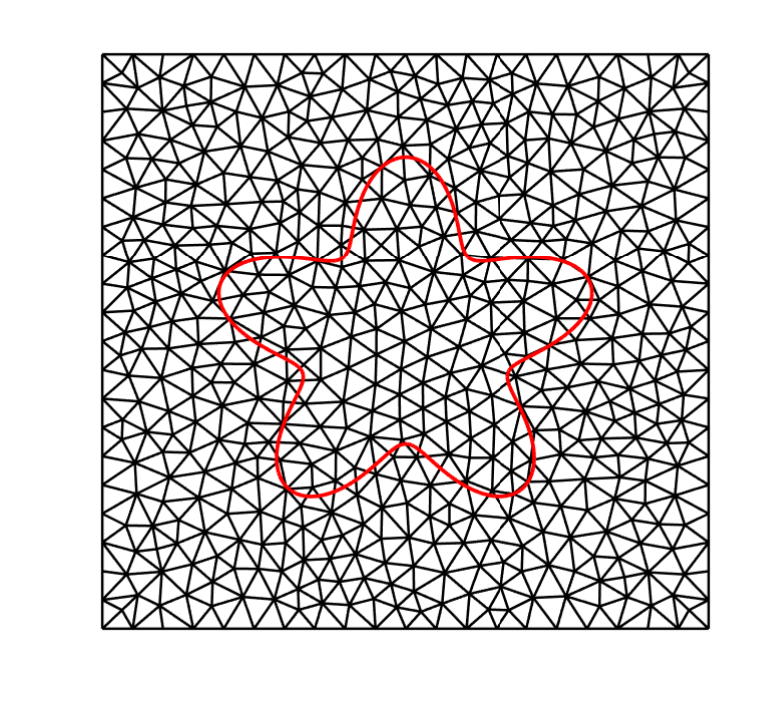}
  \caption{Triangulation for Example 6 with mesh size $h = 1/5$ (left)
    / $h = 1/10$ (right).}
  \label{fig:ex5}
\end{figure}
The convergence of the numerical solutions is displayed in Fig
\ref{fig:error_ex5}. Again we observe optimal rates of convergence for
both norms as the mesh size is decreased.

\begin{figure}[htb]
  \centering
  \includegraphics[width=2.2in]{./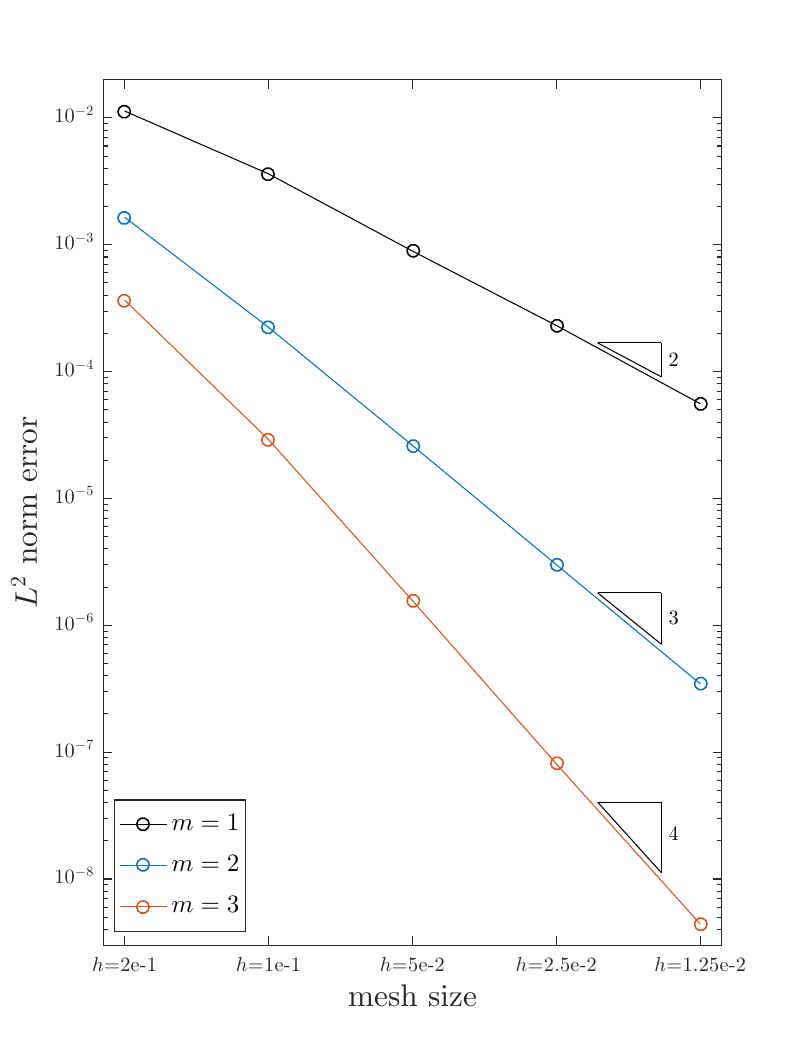}
  \includegraphics[width=2.2in]{./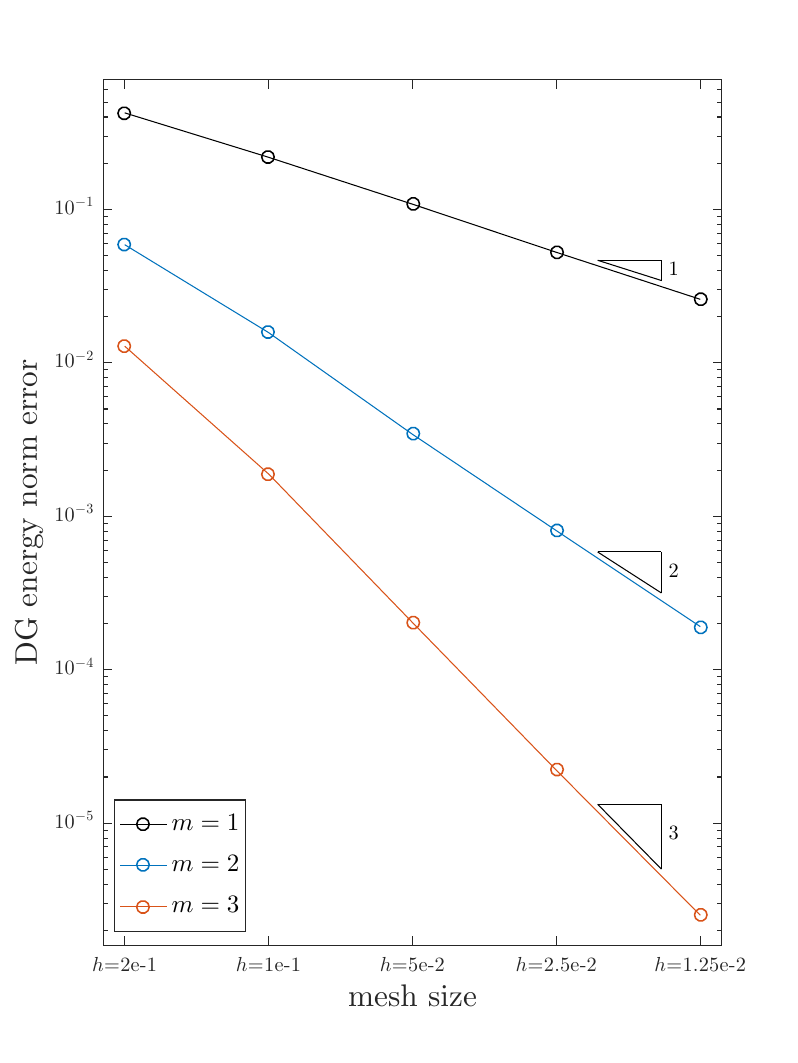}
  \caption{The convergence orders under $L^2$ norm (left) / DG
    energy norm (right) for Example 5.}
  \label{fig:error_ex5}
\end{figure}

\paragraph{\textbf{Example 6.}} In this case, we investigate the
performance of our proposed method when dealing with the problem with
low regularities. The interface can be found in
\cite{Hou2010numerical}, which is governed by the following level set
function
\begin{displaymath}
  \phi(x, y) = \begin{cases}
    y - 2x, &\quad x + y > 0, \\
    y + 0.5x, & \quad x + y \leq 0.\\
  \end{cases}
\end{displaymath}
\begin{figure}[htb]
  \centering
  \includegraphics[width=2.2in]{./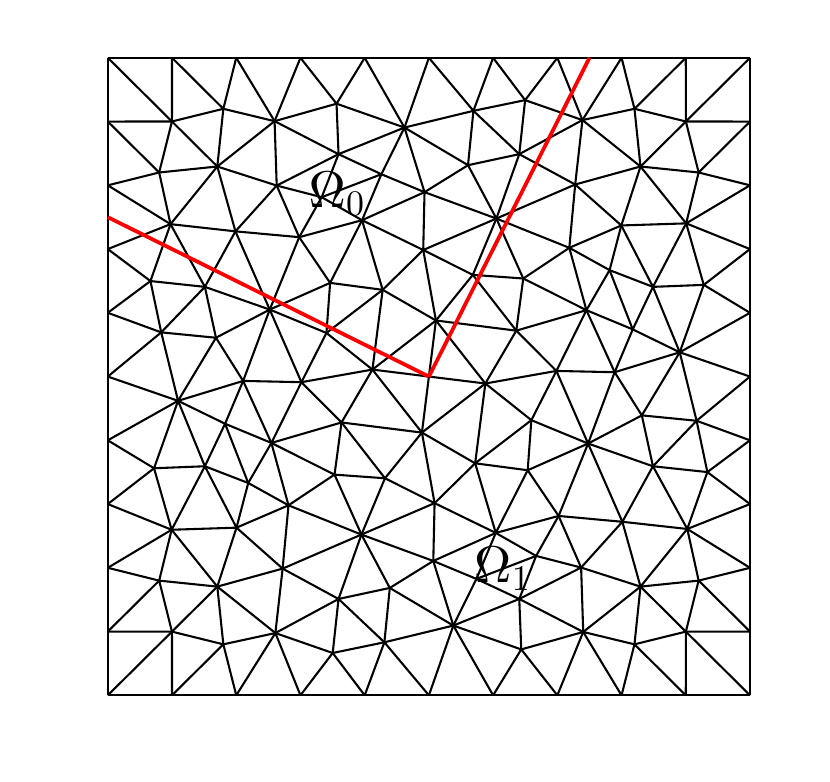}
  \includegraphics[width=2.2in]{./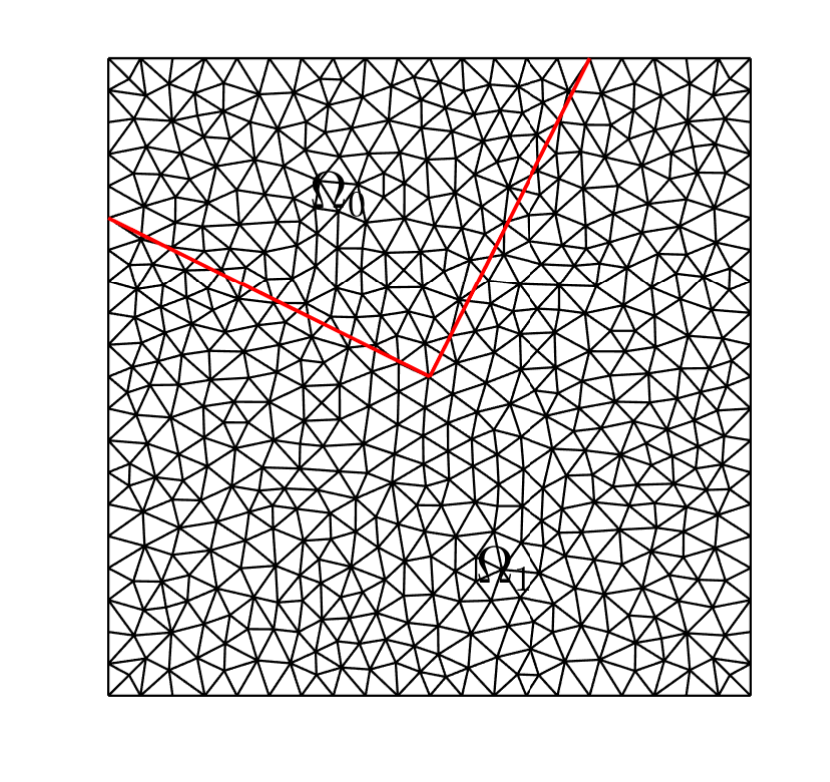}
  \caption{Triangulation for Example 6 with mesh size $h = 1/5$ (left)
    / $h = 1/10$ (right).}
  \label{fig:ex6}
\end{figure}
We note that the interface is only Lipschitz continuous and it has a
kink at $(0, 0)$, see Fig \ref{fig:ex6}. The analytical solution
$u(x, y)$ is given by
\begin{displaymath}
  \begin{aligned}
    u(x, y) &= \begin{cases}
      8, &\quad (x, y) \in \Omega_0, \\
      \sin(x + y), &\quad (x, y) \in \Omega_1 \text{ and } x + y \leq
      0,\\
      x + y, &\quad (x, y) \in \Omega_1 \text{ and } x + y > 0. \\
    \end{cases}\\
  \end{aligned}
\end{displaymath}
We choose $\beta = 1$ over the domain $(-1, 1) \times (-1, 1)$.  The
solution $u(x, y)$ is $C^2$ continuous but not $C^3$ continuous across
the line $x + y = 1$. The numerical errors in terms of $L^2$ norm and
DG energy norm are gathered in Tab \ref{tab:ex6}. It is observed that
when $m = 1, 2$ the numerical solutions converge optimally with rate
$O(h^{m+1})$ for $L^2$ norm and $m$ order for DG energy norm, which
matches with the fact that the exact solution $u$ belongs to
$H^3(\Omega_0 \cup \Omega_1)$.  When $m = 3$ the computed orders of
convergence in $\| \cdot \|_{L^2(\Omega)}$ and $\enorm{\cdot}$ are
about $O(h^{3.5})$ and $O(h^{2.5})$, respectively. A possible
explanation of the convergence orders can be traced to lack of
$H^4$-regularity of the exact solution on the domain $\Omega_1$.

\begin{table}
  \renewcommand\arraystretch{1.2}
  \centering
  \begin{tabular}{p{1.5cm} p{1.5cm} p{2.2cm} p{1.5cm} p{2.2cm}
    p{1.5cm}}
    \hline
    order $m$ &  $h$ & $L^2$ error & order & DG error & order \\
    \hline
    \multirow{5}{*}{$m=1$} & 2.00e-1 & 7.661e-3  & -  & 
    1.835e-1 & -  \\
    & 1.00e-1 & 2.515e-3 & 1.61 & 5.022e-2 & 1.00  \\
     & 5.00e-2 & 6.498e-4 & 1.95 & 2.445e-2 & 1.03 \\
     & 2.50e-2 & 1.653e-4 & 1.97 & 1.199e-2 & 1.02 \\
     & 1.25e-2 & 4.202e-5 & 1.98 & 1.156e-2 & 1.00 \\
    \hline
    \multirow{5}{*}{$m=2$} & 2.00e-1 & 4.727e-4  & -  & 
    9.283e-3 & -  \\
    & 1.00e-1 & 6.423e-5 & 2.85 & 2.393e-3 & 1.95  \\
     & 5.00e-2 & 7.249e-6 & 3.16 & 5.872e-4 & 2.01 \\
     & 2.50e-2 & 9.171e-7 & 2.98 & 1.505e-4 & 1.97 \\
     & 1.25e-2 & 1.126e-7 & 3.02 & 6.401e-5 & 2.02 \\
    \hline
    \multirow{5}{*}{$m=3$} & 2.00e-1 & 1.229e-4  & -  & 
    3.145e-3 & -  \\
    & 1.00e-1 & 1.126e-5 & 3.45 & 3.361e-4 & 2.41  \\
     & 5.00e-2 & 9.603e-7 & 3.55 & 5.721e-5 & 2.53 \\
     & 2.50e-2 & 8.249e-8 & 3.55 & 1.816e-5 & 2.55 \\
     & 1.25e-2 & 6.999e-9 & 3.56 & 3.108e-6 & 2.55 \\
    \hline
  \end{tabular}
  \caption{The convergence orders under $L^2$ norm and DG energy norm
    for Example 6. }
  \label{tab:ex6}
\end{table}

\subsection{3D Example} 
\paragraph{\textbf{Example 7.}} Here we consider a three-dimensional
elliptic interface problem. The domain $\Omega$ is $(0, 1)^3$ and the
spherical interface is given by 
\begin{displaymath}
  \phi(x, y, z) = (x - 0.5)^2 + (y - 0.5)^2 + (z - 0.5)^2 - r^2,
\end{displaymath}
with radius $r = 0.35$. We select $\beta = 1$ in the whole domain and
the exact solution is taken as 
\begin{displaymath}
  u(x, y, z) = \begin{cases}
    \sin(\pi x) \sin(\pi y) \sin(\pi z), \quad &\text{outside
    $\Gamma$}, \\
    e^{x^2 + y^2 + z^2}, \quad & \text{inside $\Gamma$}. \\
  \end{cases}
\end{displaymath}
We adopt a family of tetrahedral meshes with mesh size $h= 1/4$,
$1/8$, $1/16$, $1/32$ to solve the interface problem (see Fig
\ref{fig:3dmesh}). The numerical solutions on the meshes with $h=1/16$
and $h=1/32$ are depicted in Fig \ref{fig:3dsolution} and these two
solutions are obtained with the accuracy $m = 3$. We display the
slices at $y = 0.5$ and $z = 0.5$ of the numerical approximations on
both meshes and both solutions significantly involve a discontinuity
across a spherical, which are accordant with the interface. The
convergence rates under both norms are shown in Fig
\ref{fig:error_ex7}. Clearly, the numerical results are still
consistent with our theoretical predictions. 

\begin{figure}[htp]
  \centering
  \includegraphics[width=1.8in, height=1.8in]{./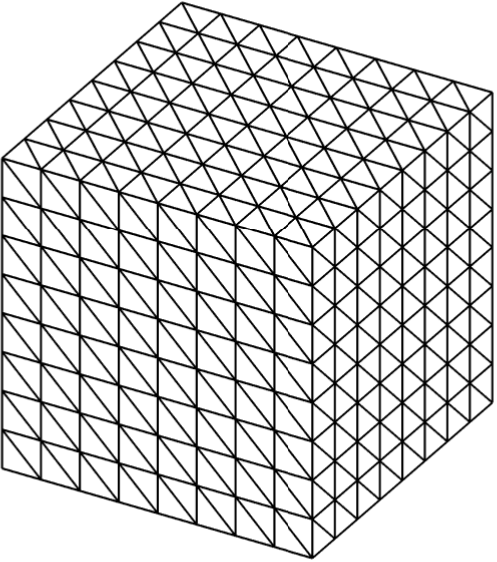}
  \hspace{35pt}
  \includegraphics[width=1.8in, height=1.82in]{./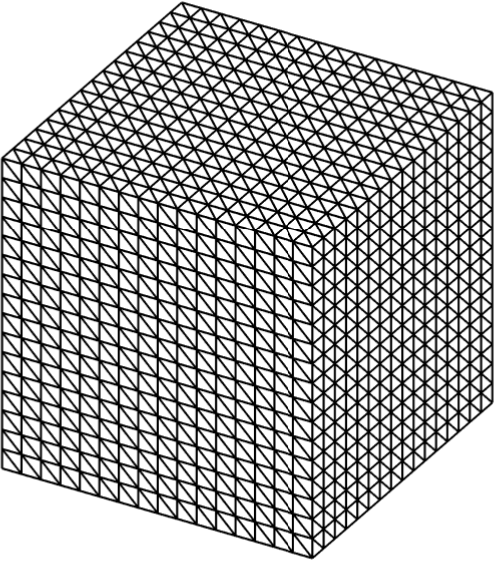}
  \caption{Tetrahedral meshes for example 7 with mesh size $h = 1/8$ (left)
    / $h = 1/16$ (right).}
  \label{fig:3dmesh}
\end{figure}

\begin{figure}[htb]
  \centering
  \includegraphics[width=2.2in]{./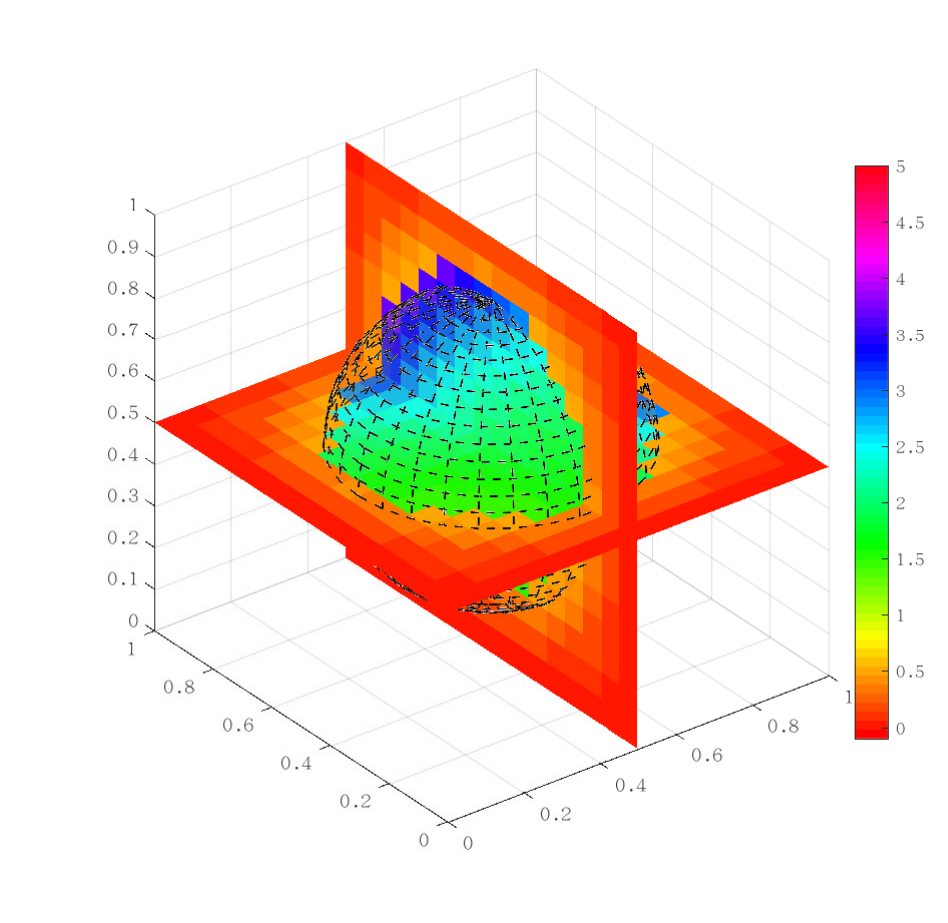}
  \includegraphics[width=2.2in]{./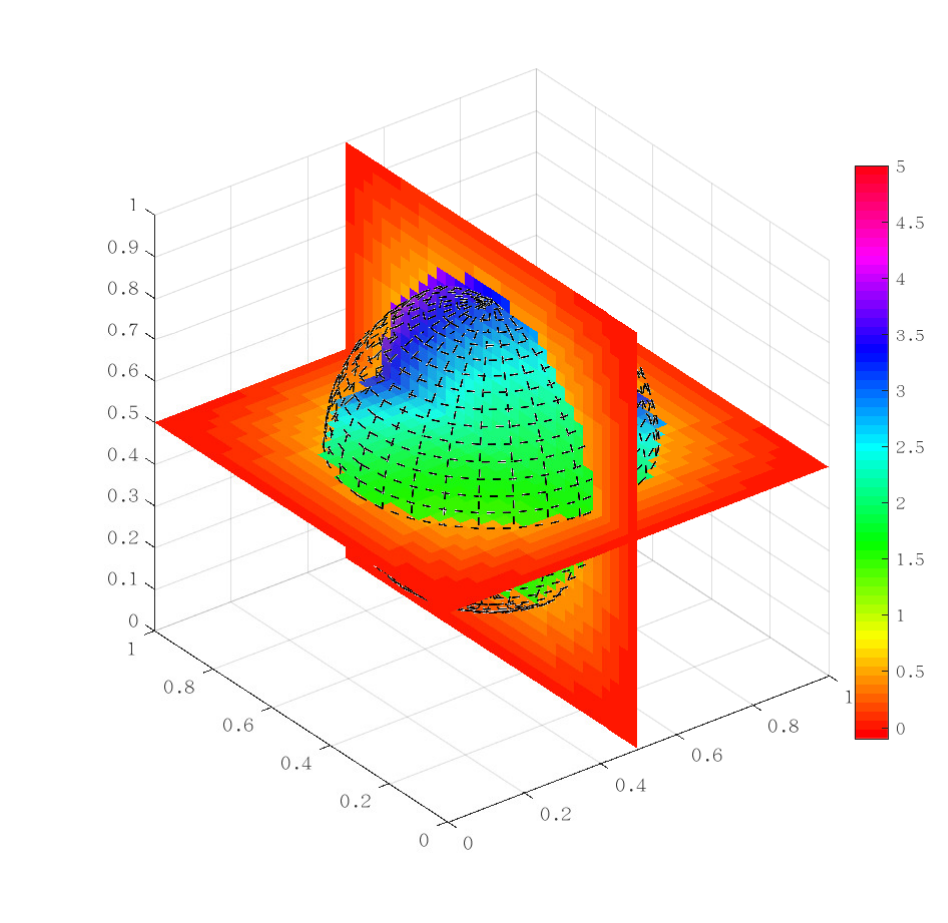}
  \caption{The numerical solution on the tetrahedral mesh with mesh
  size $h=1/16$ (left) / $h=1/32$ (right).}
  \label{fig:3dsolution}
\end{figure}

\begin{figure}[htb]
  \centering
  \includegraphics[width=2.2in]{./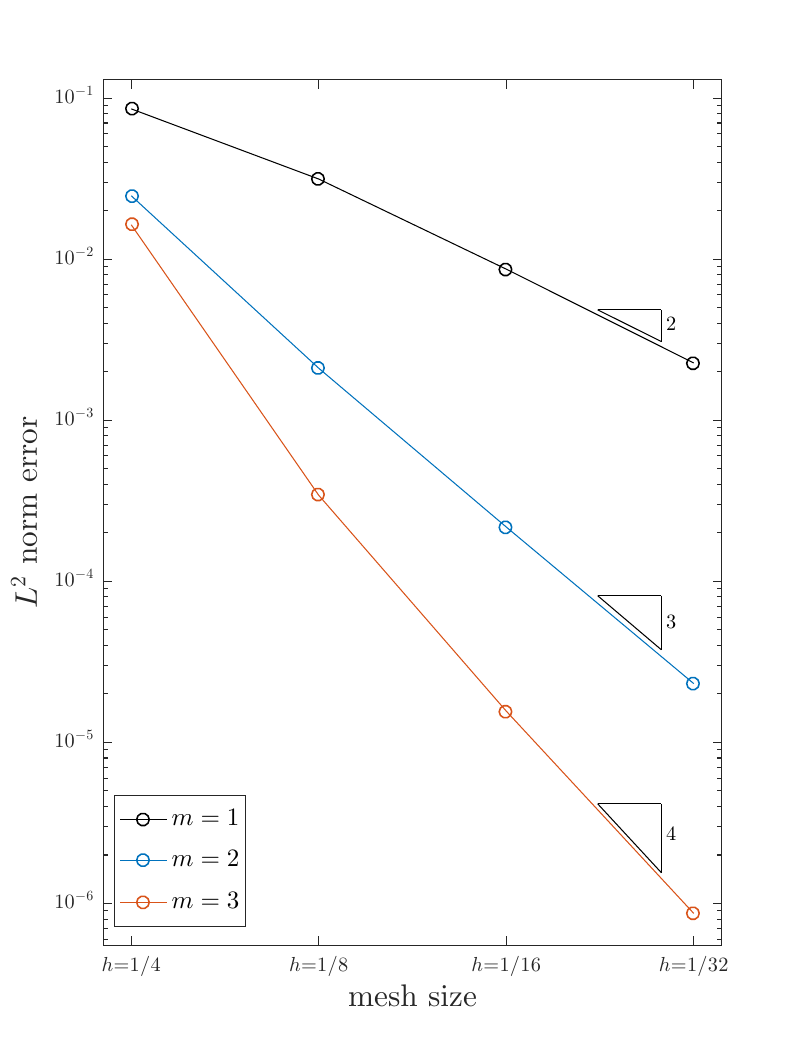}
  \includegraphics[width=2.2in]{./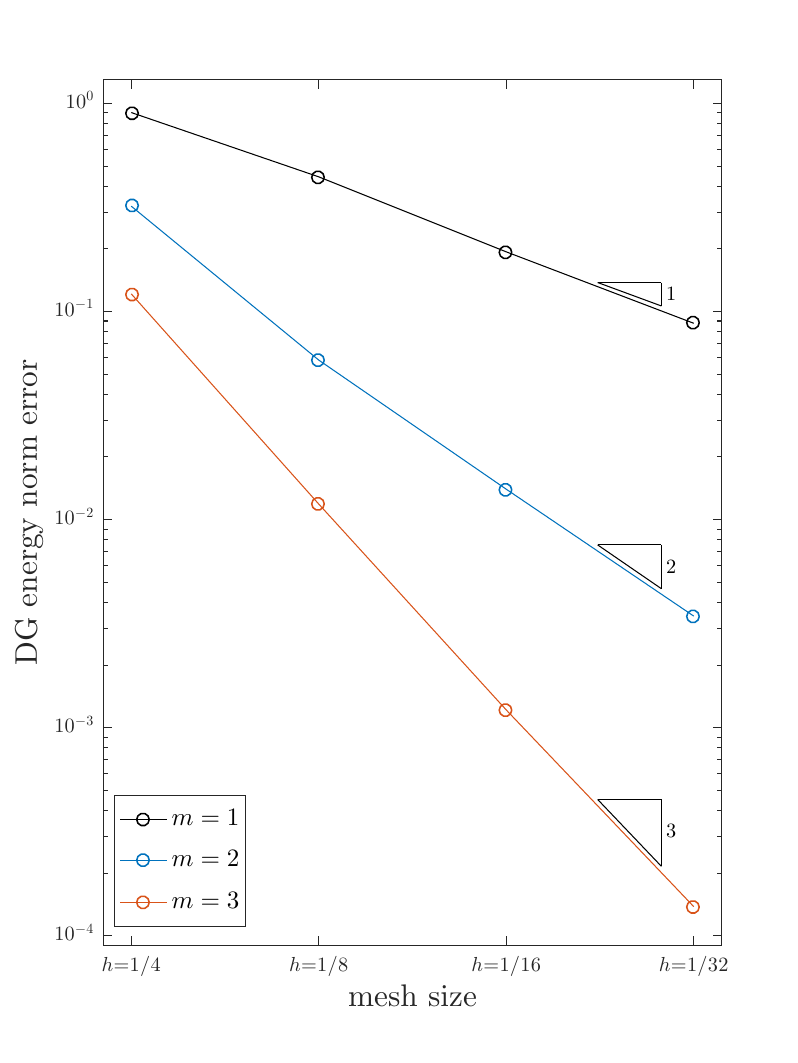}
  \caption{The convergence orders under $L^2$ norm (left) / DG
    energy norm (right) for Example 7.}
  \label{fig:error_ex7}
\end{figure}

\paragraph{\textbf{Example 8.}} 
In this example, we consider a three-dimensional elliptic problem
\cite{Wei2018spatially} with a smooth interface that is governed by
the following level set function (see Fig \ref{fig:ex8interface}), 
\begin{displaymath}
  \phi(x, y, z) = \left( (2.5(x - 0.5))^2 + (4.2(y - 0.5))^2 + (2.5(z
  - 0.5))^2 + 0.9\right)^2 - 64(y - 0.5)^2 - 1.3.
\end{displaymath}
The exact solution is choose to be 
\begin{displaymath}
  u(x, y, z) = \begin{cases}
    \cos(\pi x) \cos(\pi y) \cos(\pi z), \quad &\text{outside
    $\Gamma$}, \\
    5e^{x^2 + y^2 + z^2}, \quad & \text{inside $\Gamma$}. \\
  \end{cases}
\end{displaymath}
The domain $\Omega$ is taken to be $(0, 1)^3$ and the coefficient
$\beta$ is fixed as $1$. We also use the tetrahedral meshes with mesh
size $h=1/4$, $1/8$, $1/16$, $1/32$ for solving the problem (see Fig
\ref{fig:3dmesh}). The slices of the numerical solution with the
accuracy $m=3$ on the tetrahedral mesh with $h = 1/32$ at $x = 0.5$
and at $z = 0.5$ are depicted in Fig \ref{fig:ex8slice}. It is clear
that the discontinuity of the numerical solution sketches a curve
which matches with the interface given by the level set function (see
\ref{fig:ex8interface}). We also display the convergence history of
the numerical approximation under both $L^2$ norm and DG energy norm
in Fig \ref{fig:error_ex8}. The convergence rate of $L^2$ error may
seem less than the predicted value when the accuracy $m=1$. The rate 
is gradually more close to the theoretical value and we may expect the
rate would go back to $O(h^2)$ as the mesh size tends to zero. For
$m=2$ and $m=3$, the computed convergence rates under both error
measurements are in agreement with the theoretical results.  

\begin{figure}[htp]
  \centering
  \includegraphics[width=2in]{./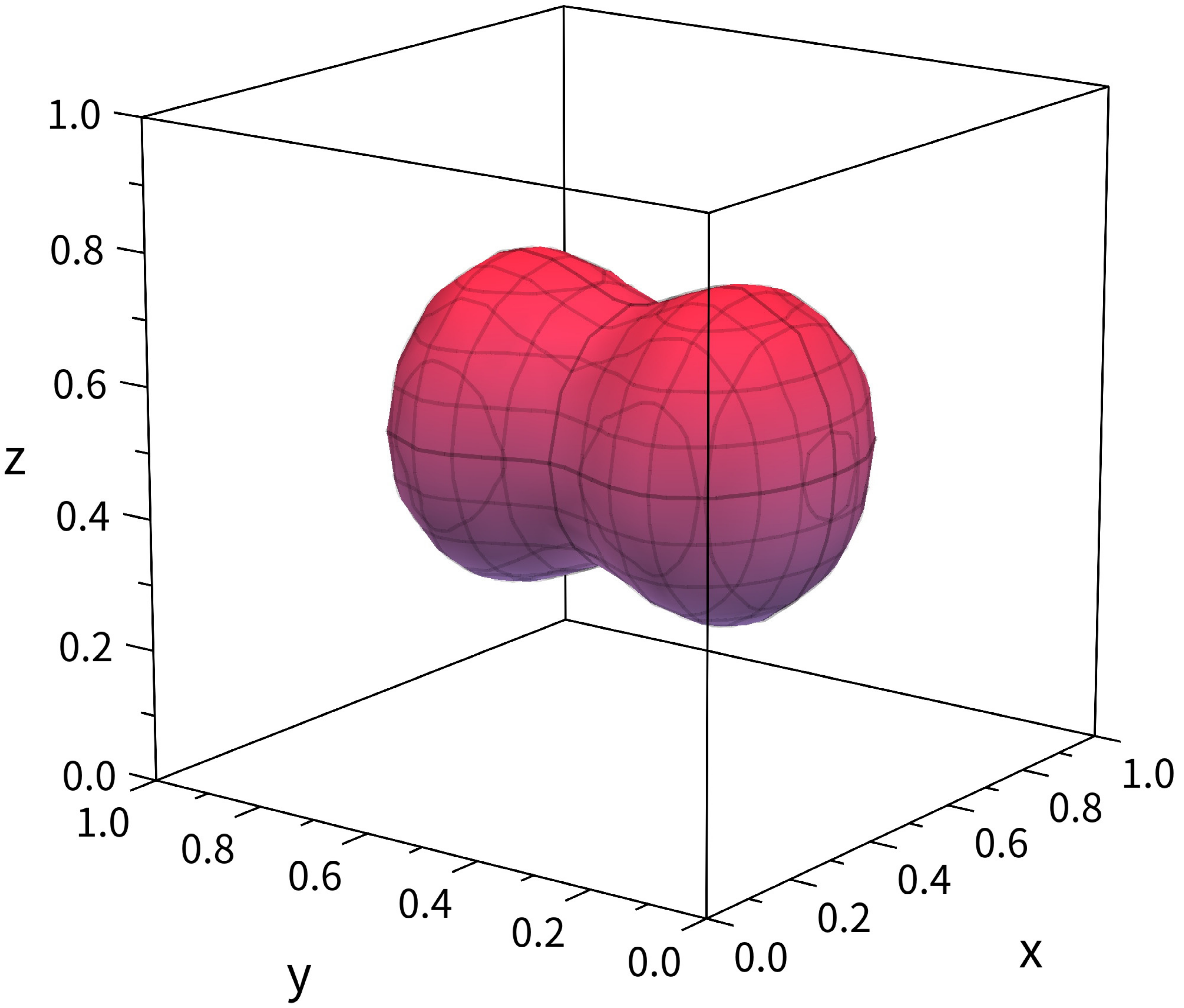}
  \hspace{30pt}
  \includegraphics[width=1.7in]{./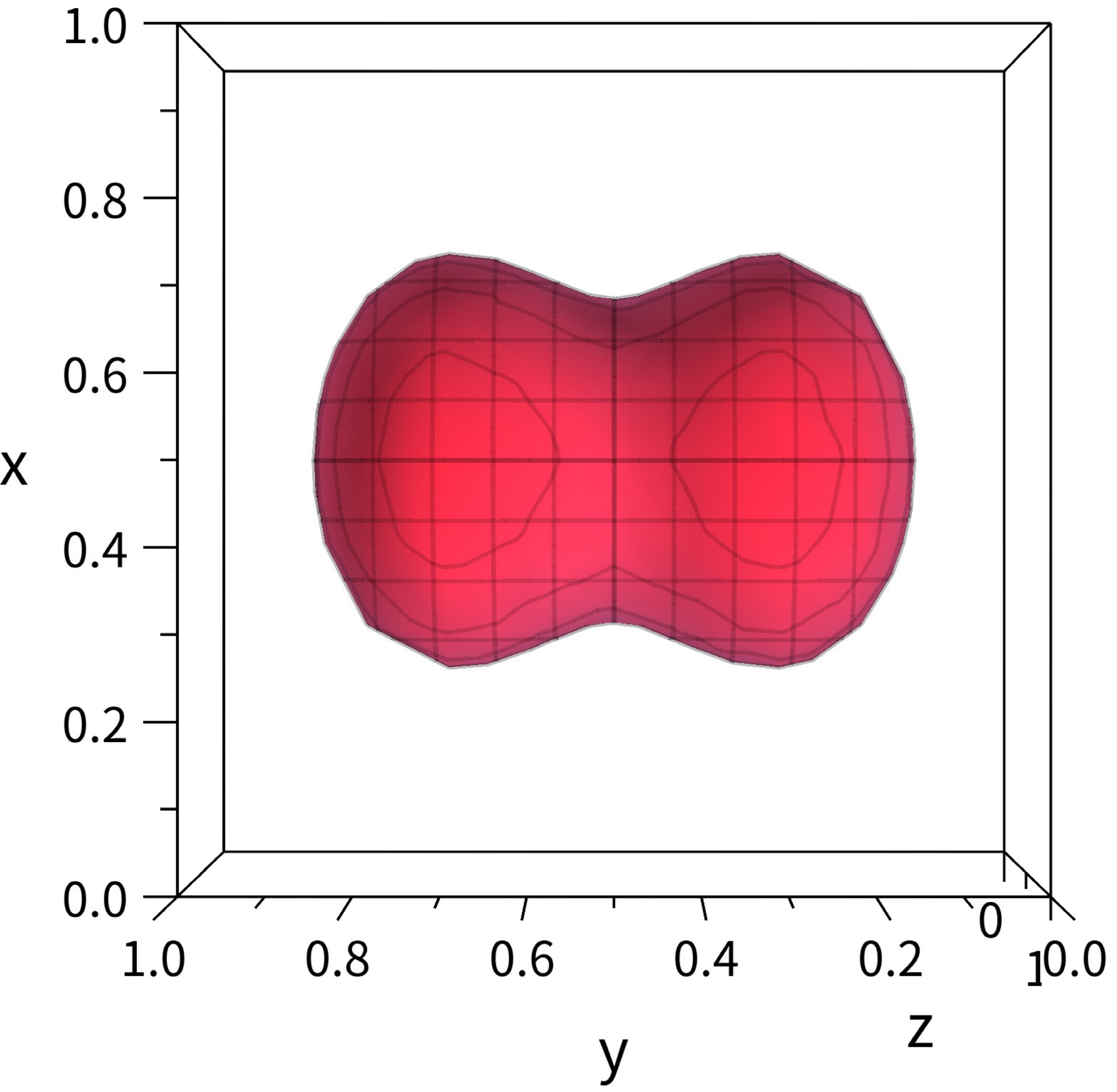}
  \caption{The interface of Example 8.}
  \label{fig:ex8interface}
\end{figure}

\begin{figure}[htp]
  \centering
  \includegraphics[width=2.2in]{./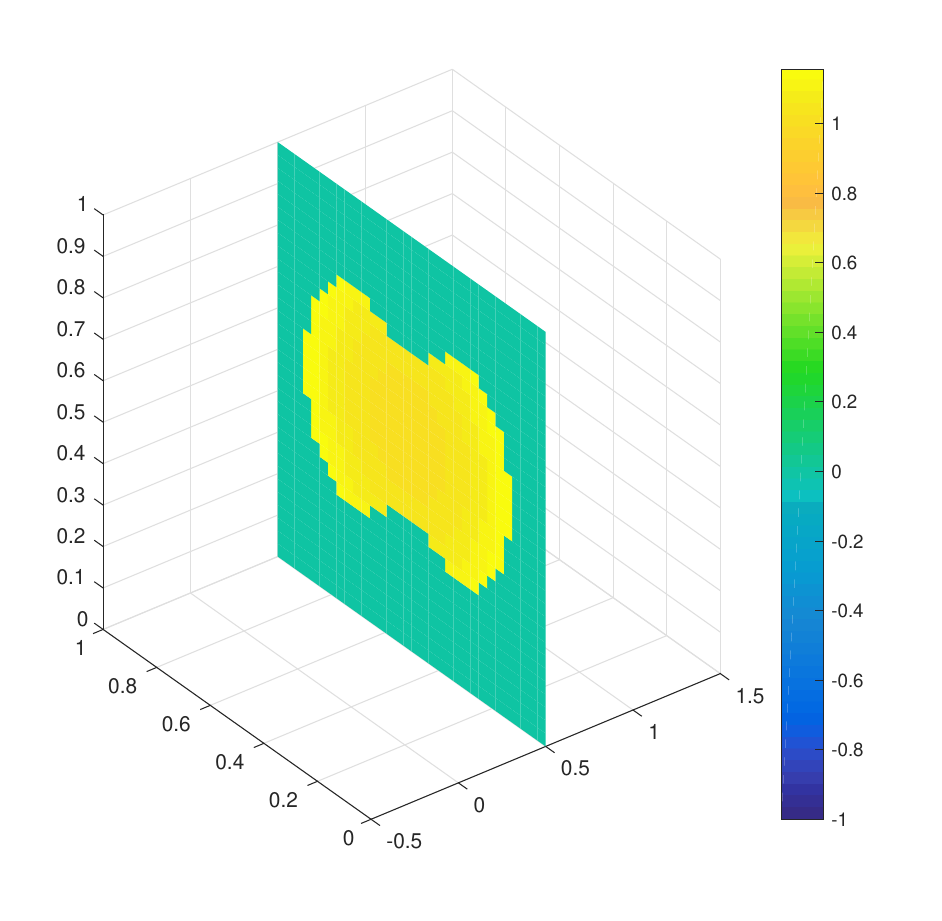}
  \hspace{30pt}
  \includegraphics[width=2.2in]{./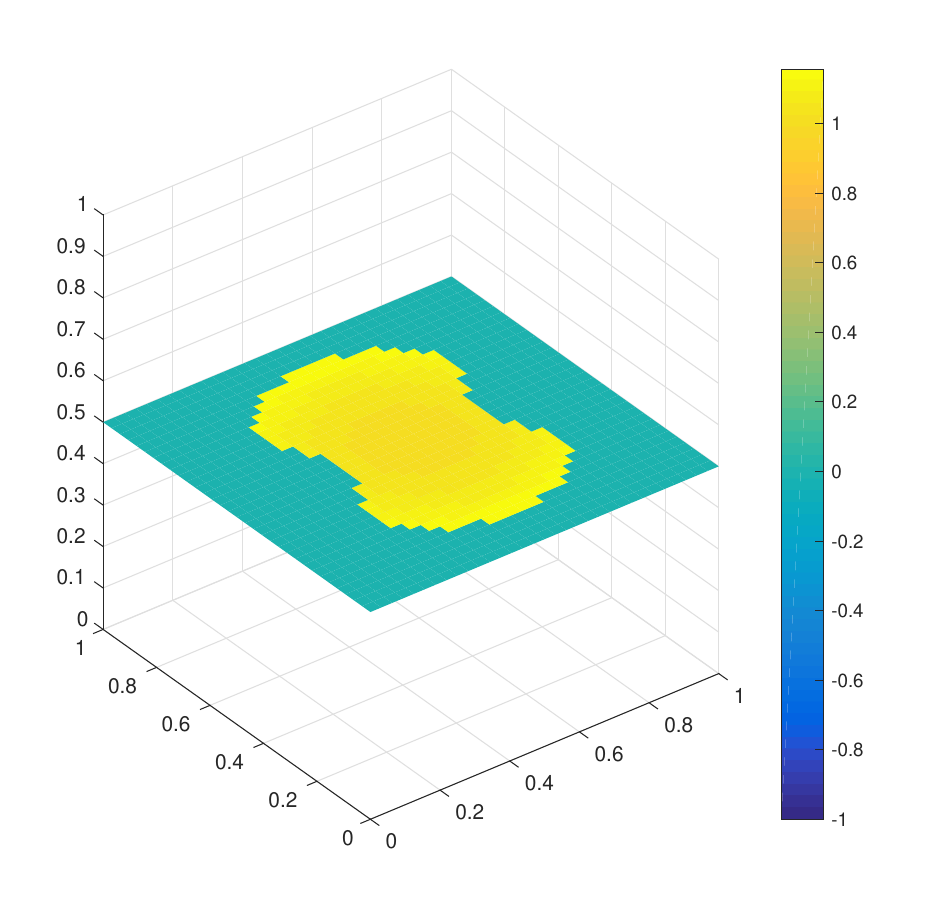}
  \caption{The slice of the numerical solution at $x = 0.5$ (left) /
  at $z = 0.5$ (right).}
  \label{fig:ex8slice}
\end{figure}

\begin{figure}[htb]
  \centering
  \includegraphics[width=2.2in]{./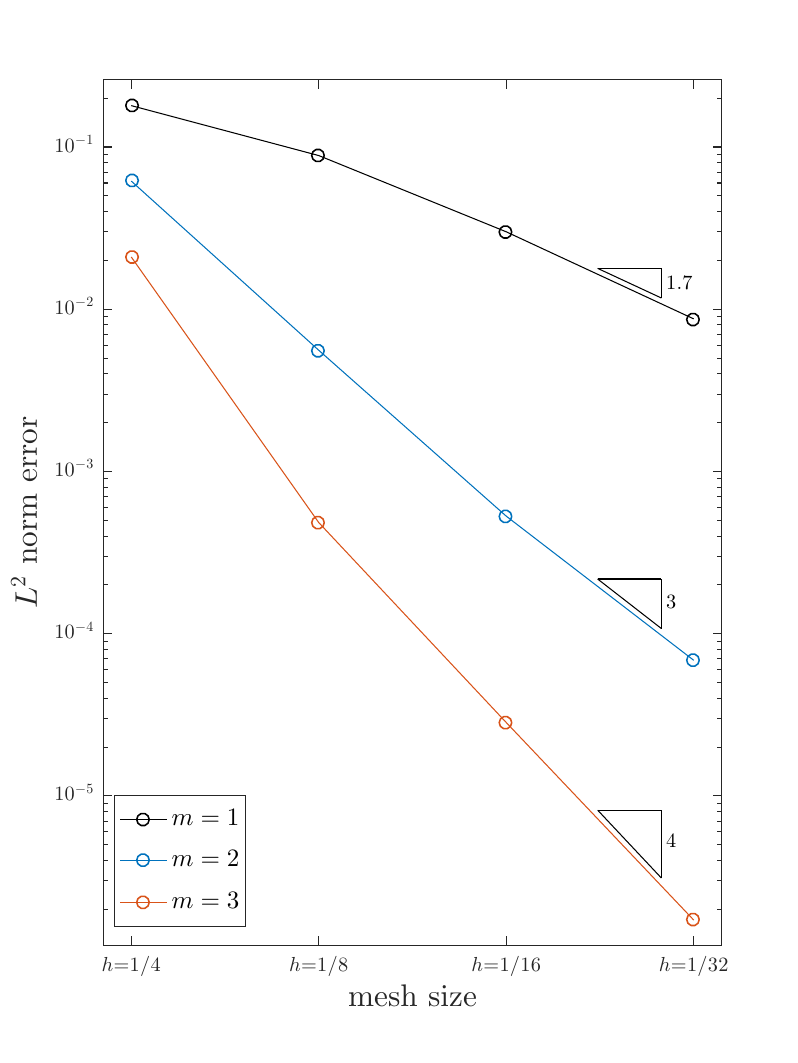}
  \includegraphics[width=2.2in]{./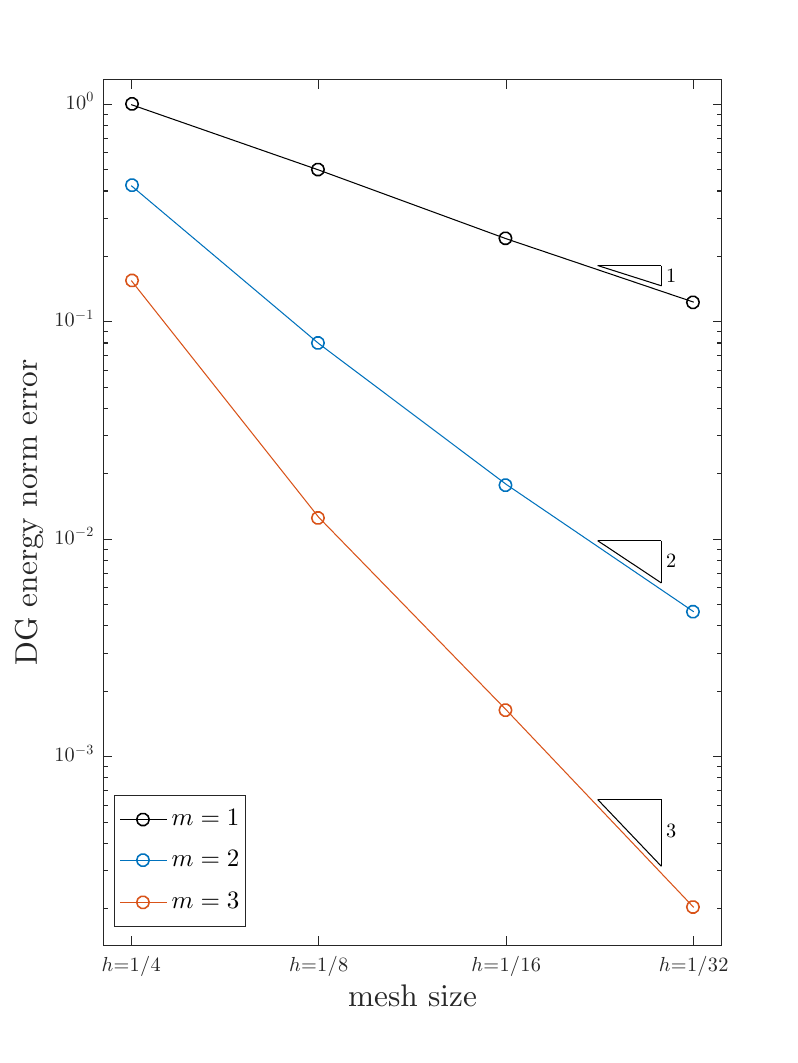}
  \caption{The convergence orders under $L^2$ norm (left) / DG
    energy norm (right) for Example 8.}
  \label{fig:error_ex8}
\end{figure}

\subsection{Integrals on Cut Element} 
In our method, computing the following types of integrals defined on
the cut element is an important issue, 
\begin{displaymath}
  \int_{K^0} v(\bm{x}) \d{x}, \quad \int_{K^1}
  v(\bm{x}) \d{x}, \quad \int_{\Gamma_K} v(\bm{x}) \d{s},
\end{displaymath}
where $K \in \MThG$ is a cut element and $K^0 = K \cap \Omega_0$, $K^1
= K \cap \Omega_1$ and $\Gamma_K = K \cap \Gamma$. Here we list two
numerical methods for computing these integrals. The first is we
generate highly accurate quadrature points and weights corresponding
to the domain $K^0$, $K^1$ and the interface $\Gamma_K$. We refer to
\cite{Cui2019quadratures, Muller2013highly, Huang2017unfitted} for
some approaches about finding such quadrature points and weights. The
computational cost of the first method is much more expensive than
ordinary numerical quadrature methods.  The second one is we
approximate the interface $\Gamma_K$ by planes or lines inside the
element $K$, see Fig ~\ref{fig:plane} for an example. In this case, we
only need to generate quadrature points and weights for polygons or
polyhedrons. The computational cost is much less than the first method
but the result is less accurate. We refer to \cite{Wang2017improved}
for more details about this method. 

Here we make a comparison between two methods. We solve the Example 7
by both two numerical quadrature methods. We call the C subroutines in
PHG package \cite{Xu2016parallel, Cui2019quadratures} to generate
highly accurate quadrature points and weights for the cut
tetrahedrons. For the second methods and for element $K \in \MThG$, we
let $K^0$ be approximated by $\wt{K}^0$ and let $K^1$ be approximated
by $\wt{K}^1$. The actual computational domains $\wt{\Omega}_0$ and
$\wt{\Omega}_1$ are then given as
\begin{displaymath}
  \wt{\Omega}_i = \left( \bigcup_{K \in \MThG} \wt{K}^i \right)
  \bigcup \left( \bigcup_{K \in \MTh^i \backslash \MThG} K \right),
  \quad i = 0, 1.
\end{displaymath}
We list the $L^2$ errors $\|u - u_h^1\|_{L^2(\Omega_0 \cup \Omega_1)}$
and $\|u - u_h^2\|_{L^2(\wt{\Omega}_0 \cup \wt{\Omega}_1)}$ in Tab
\ref{tab:errorcompare}, where $u_h^1$ and $u_h^2$ are the numerical
solutions obtained by the first and second numerical quadrature
methods, respectively. From Tab \ref{tab:errorcompare}, we observe
that the two errors are gradually closer to each other when the mesh
size tends to zero. We note that both quadrature methods work in our
numerical scheme \substitute{the second one is more accurate but extra
computational cost is required.}{\revise{and the first one is more
accurate but much more computational cost is required.}}

\begin{table}
  \centering
  \renewcommand\arraystretch{1.5}
  \begin{tabular}{p{1cm} |p{3cm} | p{1.5cm} |  p{1.5cm} |  p{1.5cm} |p{1.5cm} }
    \hline\hline
    & & $h=1/4$ & $h=1/8$ & $h=1/16$ & $h=1/32 $ \\
    \hline
    \multirow{2}{*}{$m=1$} & $\|u - u_h^1\|_{L^2(\Omega_0 \cup
    \Omega_1)}$
    & 7.1514e-2  & 3.0977e-2   & 1.0696e-2 & 2.8099e-3 \\
    \cline{2-6}
    & $\|u - u_h^2\|_{L^2(\wt{\Omega}_0 \cup \wt{\Omega}_1)}$ 
    & 1.0216e-1 &  3.3384e-2 & 1.1173e-2 & 2.8274e-3 \\
    \hline
    \multirow{2}{*}{$m=2$} & $\|u - u_h^1\|_{L^2(\Omega_0 \cup
    \Omega_1)}$ 
    & 6.1523e-2  & 2.0271e-3   & 1.8898e-4 & 2.3557e-5 \\
    \cline{2-6}
    & $\|u - u_h^2\|_{L^2(\wt{\Omega}_0 \cup \wt{\Omega}_1)}$ 
    & 3.4460e-2  & 2.4149e-3   & 2.1523e-4 & 2.3799e-5 \\
    \hline
    \multirow{2}{*}{$m=3$} & $\|u - u_h^1\|_{L^2(\Omega_0 \cup
    \Omega_1)}$ 
    & 5.1865e-2 & 3.9632e-4  & 1.7698e-5 & 9.0215e-7  \\
    \cline{2-6}
    & $\|u - u_h^2\|_{L^2(\wt{\Omega}_0 \cup \wt{\Omega}_1)}$ 
    & 1.7197e-2 & 3.7255e-4 & 1.5553e-5 & 8.7805e-7 
    \\
    \hline\hline
  \end{tabular}
  \caption{The $L^2$ errors  $\|u - u_h^1\|_{L^2(\Omega_0 \cup
  \Omega_1)}$ and $\|u - u_h^2\|_{L^2(\wt{\Omega}_0 \cup
  \wt{\Omega}_1)}$.  }
  \label{tab:errorcompare}
\end{table}

\begin{figure}[htp]
  \centering
  \begin{tikzpicture}
    \draw[thick] (-0.9, 1.5) --  (0, 0.5);
    \draw[thick] (0.0, 0.5)  -- (1.1, 1.5);
    \draw[thick, dashed] (-0.9, 1.5) -- (1.1, 1.5);
    \draw[fill, gray] (-0.9, 1.5) -- (0.0, 0.5) -- (1.1, 1.5) -- (-0.9,
    1.5);
    \draw[thick, dashed, black] (-2, 0) -- (2, 0);
    \draw[thick, black] (-2, 0) -- (0.2, 3.0) -- (2, 0);
    \draw[thick, black] (-2, 0) -- (-0.2, -2.0) -- (2, 0);
    \draw[thick, black] (0.2, 3.0) -- (-0.2, -2.0);
    \draw[thick, black] (-0.9, 1.5) to [in = 100, out = -10] (0, 0.5);
    \draw[thick, black] (0.0, 0.5) to [in = 200, out = 70] (1.1, 1.5);
    \draw[thick, black, dashed] (-0.9, 1.5) to [in = 150, out = 30] (1.1, 1.5);
    \node at (0.45, 2) {$\Gamma_K$};
    \node at (1, 0.8) {plane};
    \draw[thick, black] (4, -1.5) -- (8, -1.5) -- (6, 2) -- (4,
    -1.5);
    \draw[thick, black] (5, 0.25) to [out = 20, in = 160] (7, 0.25);
    \draw[thick, gray] (5, 0.25) -- (7, 0.25);
    \node[below] at (6, 0.25) {line};
  \end{tikzpicture}
  \caption{The interface inside a cut element $K$ for $d = 3$ (left) /
  $d = 2$ (right).}
  \label{fig:plane}
\end{figure}
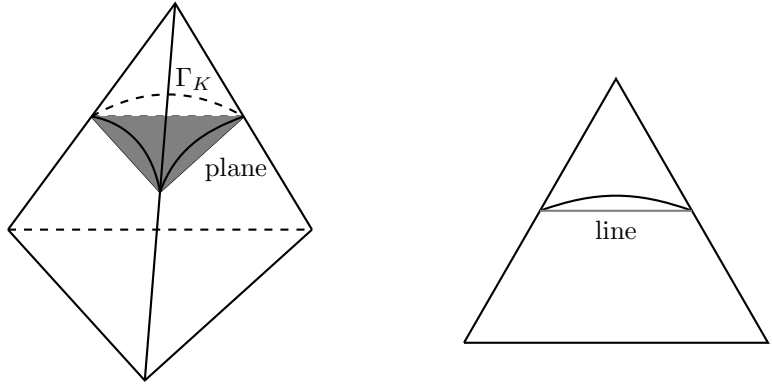

\subsection{Efficiency comparison}
Hughes et al. \cite{hughes2000comparison} point out that the number of
unknowns of a discretized problem is a proper indicator for the
efficiency of a numerical method. To show the efficiency in DOFs of
our method, we make a comparison among the unfitted DG method
\cite{Massjung2012unfitted}, the unfitted penalty finite element
method \cite{Wu2012unfitted, Xiao2020high} and our method by
solving the two-dimensional elliptic interface problem. The first
method adopts the standard discontinuous finite element space, and the
second method employs the traditional continuous finite element space.
The solution and the partition are taken from Example 1. In Fig
\ref{fig:compare}, we plot the $L^2$ norm of the error of three
methods against the number of degrees of freedom with $1 \leq m \leq
3$.

One see that for the low orders of approximation($m = 1$), the penalty
FE method is the most efficient method. For $m=2$, our method shows
almost the same efficiency as the penalty FE method. For the high
order accuracy($m=3$), our method performs better than the other
methods. 
\begin{figure}[htb]
  \centering
  \includegraphics[width=.32\textwidth]{./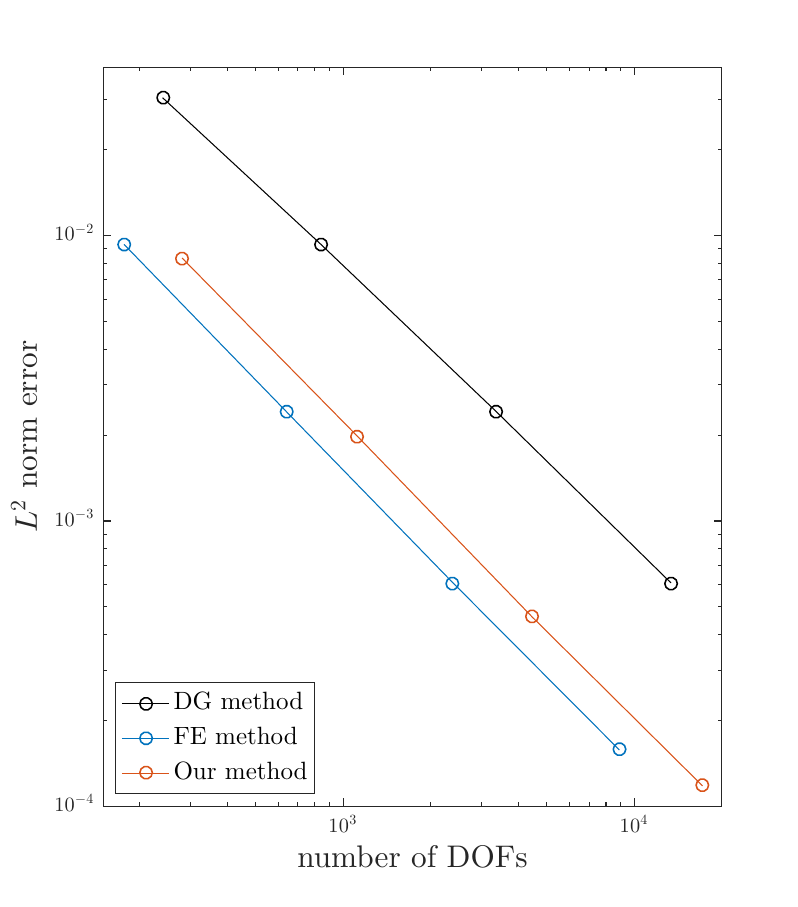}
  \includegraphics[width=.32\textwidth]{./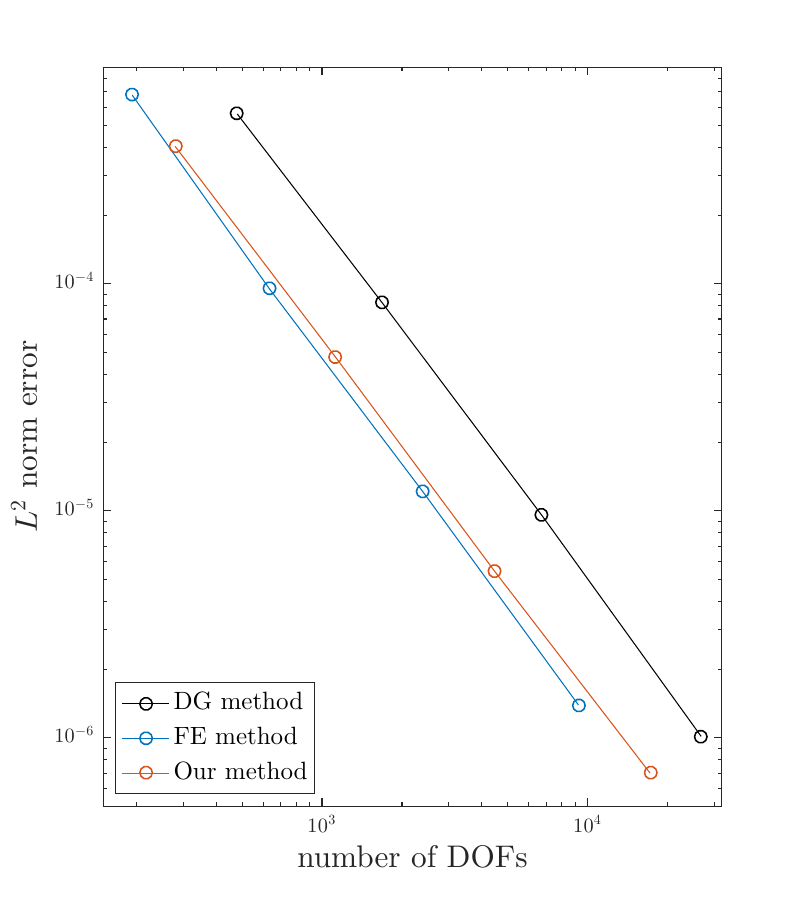}
  \includegraphics[width=.32\textwidth]{./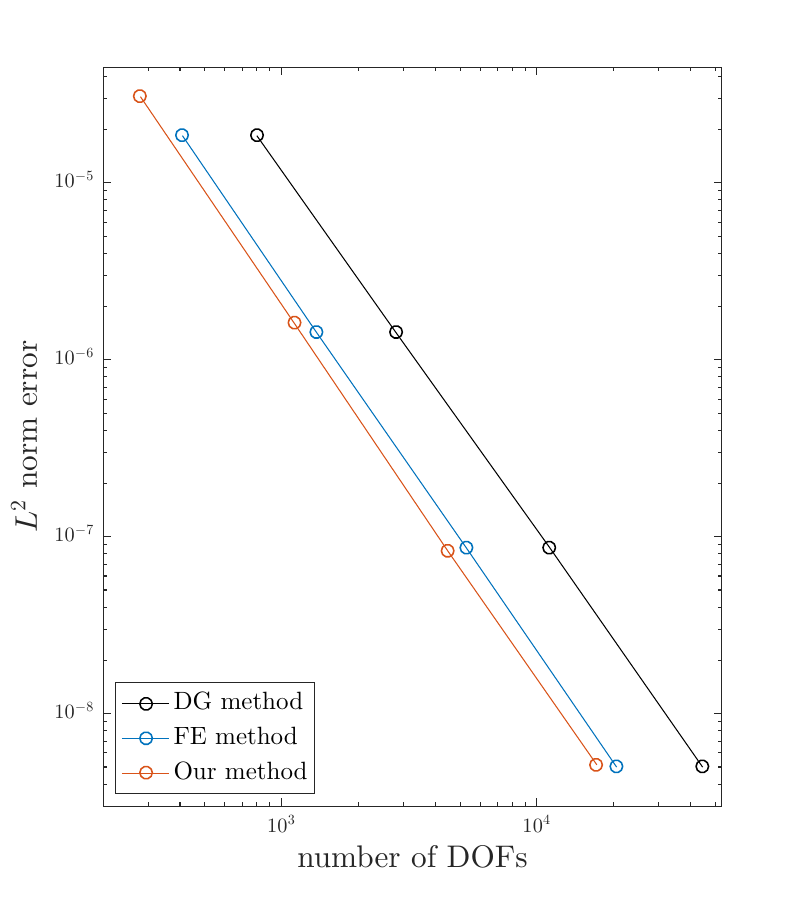}
  \caption{Comparison of the $L^2$ errors in number of DOFs by three
    methods with $m=1$, $2$, and $3$.}
  \label{fig:compare}
\end{figure}


\section{Conclusion}
\label{sec:conclusion}
We proposed a new discontinuous Galerkin method for elliptic interface
problem. The approximation space is constructed by solving the local
least squares problem. We proved optimal convergence orders in both
$L^2$ norm and DG energy norm.  A series of numerical results confirm
our theoretical results and exhibit the flexibility, robustness and
efficiency of the proposed method.

\section*{Acknowledgements}
The authors would like to thank the anonymous referees sincerely for
their constructive comments that improve the quality of this paper.
This research was supported by the Science Challenge Project (No.
TZ2016002) and the National Science Foundation in China (No.
11971041).


\begin{appendix}
  \section{Construction of Element Patch}
  \label{sec:cep}
  \revise{Here we present the algorithm to the construction of the
  element patch in Alg ~\ref{alg:patch} and} we also give some an
  example of constructing element patches. We consider a circular
  interface. Let $\Omega_1$ be the domain inside the circle and
  $\Omega_0 = \Omega \backslash \Omega_1$. For element $K \in \MTh^0
  \backslash \MThG$, the construction of $S^0(K)$ is presented in Fig
  \ref{fig:build_patch0}.
  \begin{figure}
    \centering
    \includegraphics[width=1.00\textwidth]{./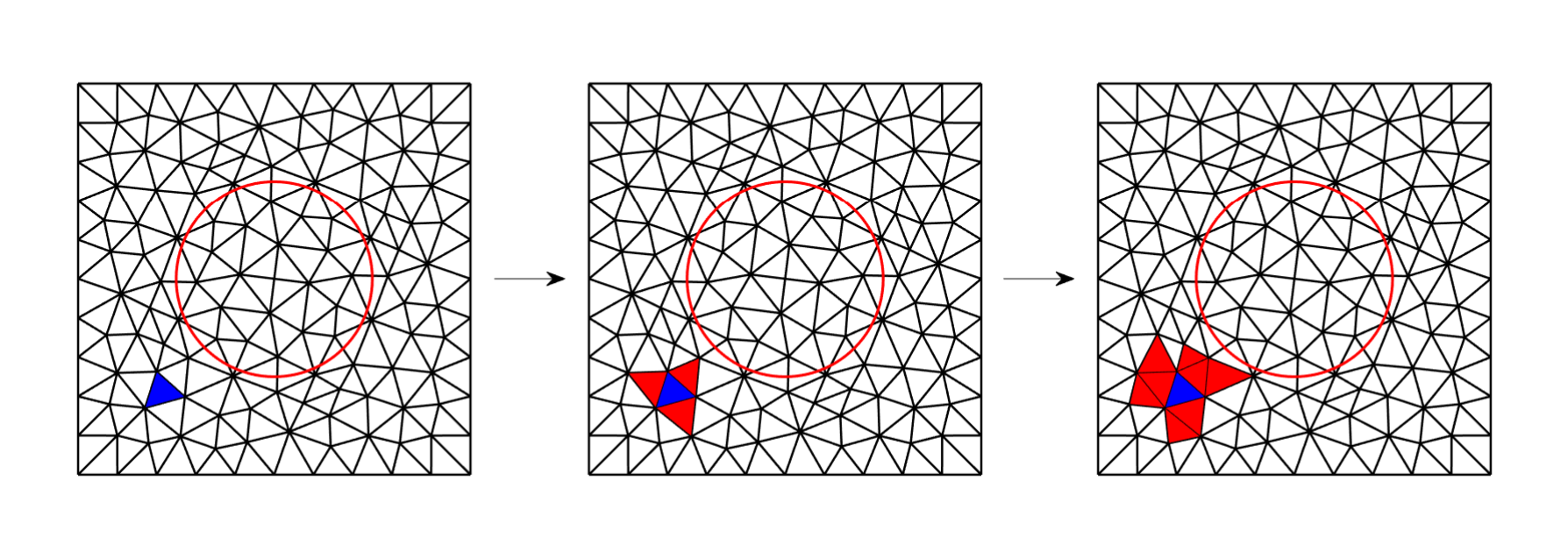}.
    \vspace{-30pt}
    \caption{Example to build element patch $S^0(K)$ for $K \in \MTh^0
    \backslash \MThG$.}
    \label{fig:build_patch1}
  \end{figure}
  For element $K \in \MTh^1 \backslash \MThG$, the construction of
  $S^1(K)$ is presented in Fig \ref{fig:build_patch0}.
  \begin{figure}
    \centering
    \includegraphics[width=1.00\textwidth]{./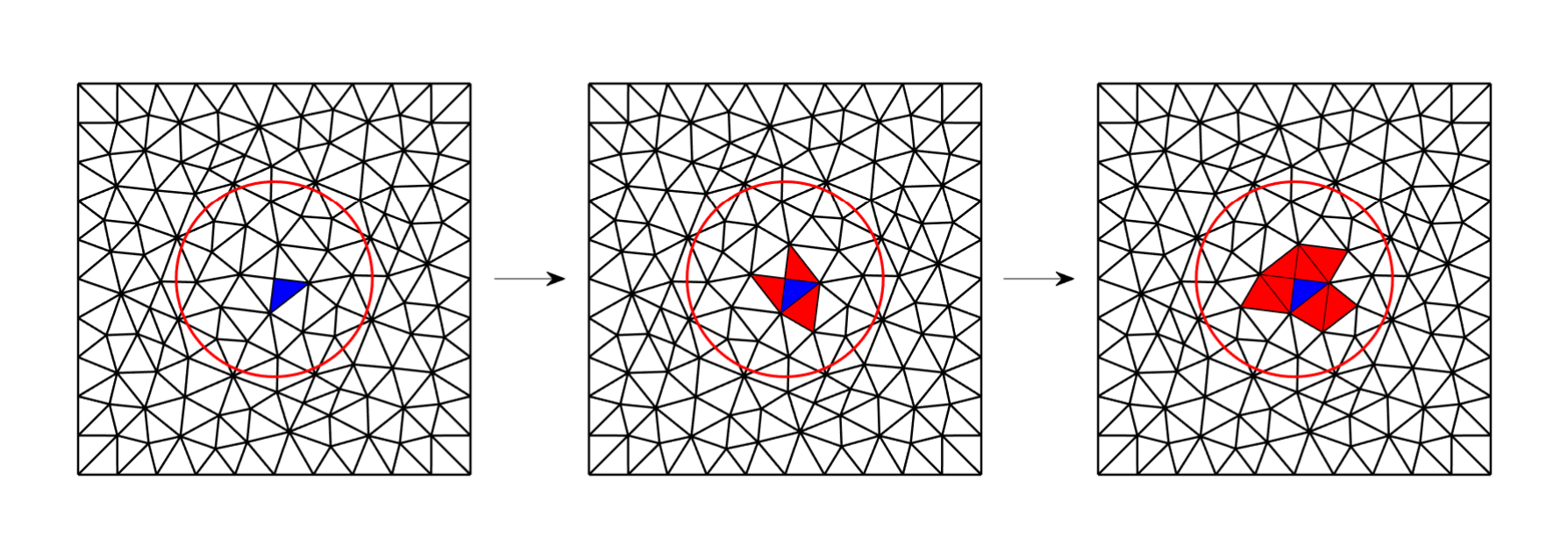}.
    \vspace{-30pt}
    \caption{Example to build element patch $S^1(K)$ for $K \in \MTh^1
    \backslash \MThG $.}
    \label{fig:build_patch0}
  \end{figure}

  \begin{algorithm}[htb]
    \caption{Construction of Element Patch}
    \label{alg:patch}
    \begin{algorithmic}[1]
    \renewcommand{\algorithmicrequire}{\textbf{Input:}}
    \REQUIRE
    partition $\MTh$ and a uniform threshold $\#
    S(K)$; \\
    \renewcommand{\algorithmicrequire}{\textbf{Output:}}
    \REQUIRE
    the element patches $S^0(K)$ for all $K$ in $\MTh^0$ and the
    element patches $S^1(K)$ for all $K$ in $\MTh^1$;
    \FOR{$i \in \{0, 1\}$}
    \FOR{each $K \in \MTh^i \backslash \MThG$}
    \STATE{set $t=0$, $S_t^i(K) = \left\{ K \right\}$;}
    \WHILE{the cardinality of $S_t^i(K)$ $<$ $\# S(K)$}
    \STATE{initialize the set $S_{t+1}^i(K) = S_t^i(K)$;}
    \FOR{each $K \in S_t^i(K)$}
    \STATE{let $N(K)$ be the face-neighbouring elements of $K$;}
    \FOR{each $\wt{K} \in N(K)$}
    \IF{$\wt{K} \notin S_{t+1}^i(K)$ and $\wt{K} \in \MTh^i$}
    \STATE{add $\wt{K}$ to $S_{t+1}^i(K)$;}
    \ENDIF
    \IF{the cardinality of $S_t^i(K)$ = $\# S(K)$}
    \STATE{break while;}
    \ENDIF
    \ENDFOR
    \ENDFOR
    \STATE{let $t = t+1$;}
    \ENDWHILE
    \STATE{let $S^i(K) = S_t^i(K)$;}
    \ENDFOR
    \ENDFOR
    \FOR{each $K \in \MThG$}
    \STATE{seek $K_\circ^0$ and $K_\circ^1$ and let $S^0(K) =
    S^0(K_\circ^0)$ and $S^1(K) = S^1(K_\circ^1)$;}
    \ENDFOR
    \end{algorithmic}
  \end{algorithm}

  \section{1D Example}
  \label{sec:1dexample}
  Here we present a one-dimensional example to illustrate our method.
  We consider the interval $\Omega = [-1, 1]$ which is divided into two
  parts $\Omega_0 = (-1, -0.2) $ and $\Omega_1 = (-0.2, 1)$. We
  partition $\Omega$ into 8 elements $\left\{ K_1, K_2, \cdots, K_8
  \right\}$ with uniform spacing.
  \begin{figure} 
    \centering
    \hspace{-25pt}
    \begin{tikzpicture}[xscale=5.5, yscale=5]
      \draw[thick] (-1, 0) -- (1,0);
      \foreach \x in{0,...,8}
      \draw[thick] (-1 + \x*0.25, -0.002) -- (-1 + \x*0.25, 0.02);
      \foreach \x in{0,...,7}
      \draw[fill=black] (-0.875 + \x*0.25, 0) circle [radius=0.01];
      \foreach \x in{1,...,8}
      \node[below] at (-1.08+ \x*0.25, 0) {\footnotesize $K_{\x}$};
      \foreach \x in{1,...,8}
      \node[above right] at (-1.135+ \x*0.25, 0) {\footnotesize $x_{\x}$};
      \draw[thick, dashed, <->] (-1, -0.15) -- (-0.2, -0.15);
      \draw[thick, dashed] (-1, -0.13) -- (-1, 0);
      \draw[thick, dashed] (-0.2, -0.13) -- (-0.2, 0.02);
      \node[below] at (-0.6, -0.15) {$\Omega_0$};
      \draw[thick, dashed, <->] (1, -0.15) -- (-0.2, -0.15);
      \draw[thick, dashed] (1, -0.13) -- (1, 0);
      \node[below] at (0.399, -0.15) {$\Omega_1$};
    \end{tikzpicture}
    \vspace{-15pt}
    \caption{The uniform grid on $[-1, 1]$.}
    \label{fig:uniformgrid}
  \end{figure}
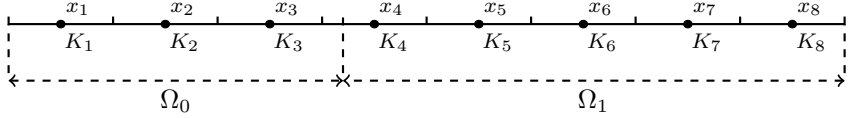
  $\left\{ x_1, x_2, \cdots, x_8 \right\}$ are the set of collocations
  where $x_i$ is the midpoint of the element $K_i$. Since $\MThG =
  \left\{ K_4 \right\}$, we construct element patches for elements in
  $\MThB$. The element patches could be constructed as 
  \begin{displaymath}
    S^0(K_1) = \left\{ K_1, K_2 \right\}, \quad S^0(K_2) = \left\{
    K_2, K_3 \right\}, \quad S^0(K_3) = \left\{ K_2, K_3, K_4
    \right\}, 
  \end{displaymath}
  \begin{displaymath}
     \begin{aligned}
       S^1(K_5) &= \left\{ K_4, K_5, K_6 \right\}, \quad S^1(K_6) =
       \left\{ K_5, K_6, K_7 \right\}, \\
       S^1(K_7) &= \left\{ K_6, K_7 \right\}, \quad S^1(K_8) = \left\{
       K_7, K_8 \right\}.
     \end{aligned}
  \end{displaymath}
  Then for element $K_4$ it is clear that $K_\circ^0 = K_3$ and
  $K_\circ^1 = K_5$, and the element patches of $K_4$ are 
  \begin{displaymath}
    \begin{aligned}
    S^0(K_4) &= S^0(K_3) = \left\{K_2, K_3, K_4 \right\},  \\
    S^1(K_4) &= S^1(K_5) = \left\{ K_4, K_5, K_6 \right\}.
  \end{aligned}
  \end{displaymath}
  Then we would solve the least squares problem on every patch. We
  take $S^0(K_3)$ for an example, for a continuous function $g$ and
  $m=1$ the least squares problem is written as
  \begin{displaymath}
    \mathop{\arg \min}_{(a, b) \in \mb R} \sum_{i = 2}^4 |(ax_i + b)
    - g(x_i)|^2.
  \end{displaymath}
  It is easy to get the unique solution
  \begin{displaymath}
    (a,b)^T = (A^TA)^{-1}A^Tq, 
  \end{displaymath}
  where
  \begin{displaymath}
    A = \begin{bmatrix}
      1 & x_2 \\ 1 & x_3 \\ 1 & x_4 \\
    \end{bmatrix}, \qquad q = \begin{bmatrix}
      g(x_2) \\ g(x_3) \\ g(x_4) \\
    \end{bmatrix}.
  \end{displaymath}
  We note that the matrix $(A^TA)^{-1}A^T$ has no relationship to the
  function $g$ and contains all information of all $\lambda_K^i$ on
  element $K_2$. Hence we store the matrix $(A^TA)^{-1}A^T$ for every
  element patch to represent all $\lambda_K^i$. It is in the same way
  when we deal with the high dimensional problem.
\end{appendix}


\bibliographystyle{amsplain}
\bibliography{../ref.bib}
\end{document}